\DeclareSymbolFont{cyrletters}{OT2}{wncyr}{m}{n}
\DeclareMathSymbol{\Be}{\mathalpha}{cyrletters}{"42}                          
\DeclareMathSymbol{\Che}{\mathalpha}{cyrletters}{"51}                          
\DeclareMathSymbol{\Sha}{\mathalpha}{cyrletters}{"58}                          
\DeclareRobustCommand\widecheck[1]{{\mathpalette\@widecheck{#1}}}
\def\@widecheck#1#2{%
    \setbox\z@\hbox{\m@th$#1#2$}%
    \setbox\tw@\hbox{\m@th$#1%
       \widehat{%
          \vrule\@width\z@\@height\ht\z@
          \vrule\@height\z@\@width\wd\z@}$}%
    \dp\tw@-\ht\z@
    \@tempdima\ht\z@ \advance\@tempdima2\ht\tw@ \divide\@tempdima\thr@@
    \setbox\tw@\hbox{%
       \raise\@tempdima\hbox{\scalebox{1}[-1]{\lower\@tempdima\box
\tw@}}}%
    {\ooalign{\box\tw@ \cr \box\z@}}}
\theoremstyle{plain}      \newtheorem{thm}{Theorem}[section]                   
\theoremstyle{plain}                   
\theoremstyle{plain}      \newtheorem{lem}[thm]{Lemma}                         
\theoremstyle{plain}                             
\theoremstyle{plain}      \newtheorem{cor}[thm]{Corollary}                     
\theoremstyle{plain}                         
\theoremstyle{plain}      \newtheorem{prop}[thm]{Proposition}                  
\theoremstyle{plain}      \newtheorem{conjecture}[thm]{Conjecture}             
\theoremstyle{definition} \newtheorem{rmk}[thm]{Remark}                        
\theoremstyle{definition}                      
\theoremstyle{definition} \newtheorem{df}[thm]{Definition}                     
\theoremstyle{definition}                   
\theoremstyle{definition} \newtheorem{eg}[thm]{Example}                        
\theoremstyle{definition}                        
\theoremstyle{definition}                        
\theoremstyle{definition}                      
\theoremstyle{definition}                    
\theoremstyle{definition}                  
\theoremstyle{definition}                        
\theoremstyle{definition}                       
\theoremstyle{definition}                    
\theoremstyle{definition}                  
\theoremstyle{definition} \newtheorem{construction}[thm]{Construction}         
\theoremstyle{definition}              
\theoremstyle{definition}                  
\theoremstyle{definition} \newtheorem{prop-df}[thm]{Proposition-Definition}    
\theoremstyle{definition} \newtheorem{thm-df}[thm]{Theorem-Definition}
\theoremstyle{definition}
\newtheorem*{construction*}{Construction}                                      
\newtheorem*{conjecture*}{Conjecture}                                          
\newtheorem*{hypothesis*}{Hypothesis}                                          
\newtheorem*{convention*}{Convention}                                          
\newtheorem*{notation*}{Notation}                                              
\newtheorem*{prop*}{Proposition}                                               
\newtheorem*{summary*}{Summary}                                                
\newtheorem*{qt*}{Question}                                                    
\newtheorem*{rmk*}{Remark}                                                     
\newtheorem*{fact*}{Fact}                                                      
\newtheorem*{lizi*}{Example}                                                   
\newtheorem*{df*}{Definition}                                                  
\theoremstyle{plain}
\newtheorem*{thm*}{Theorem}                                                    
\newcommand{\bconst}{\begin{construction}}                                     
\newcommand{\econst}{\end{construction}}                                       
\newcommand{\benum}{\begin{enumerate}[label={{\upshape(\alph*)}}]}             
\newcommand{\benuma}{\begin{enumerate}[label={{\upshape(\arabic*)}}]}          
\newcommand{\benumr}{\begin{enumerate}[label={{\upshape(\roman*)}}]}           
\newcommand{\eenum}{\end{enumerate}}
\newcommand{\bconj}{\begin{conjecture}}
\newcommand{\econj}{\end{conjecture}}
\newcommand{\bconjnn}{\begin{conjecture*}}
\newcommand{\econjnn}{\end{conjecture*}}
\newcommand{\begs}{\begin{eg}\hfill\benuma}                                    
\newcommand{\eegs}{\eenum\end{eg}}                                             
\newcommand{\brmks}{\begin{rmk}\hfill\benuma}                                  
\newcommand{\ermks}{\eenum\end{rmk}}                                           
\newcommand{\bdfs}{\begin{df}\hfill\benuma}                                    
\newcommand{\edfs}{\eenum\end{df}}                                             
\newcommand{\bitem}{\begin{itemize}}                                           
\newcommand{\eitem}{\end{itemize}}                                             
\newcommand{\be}{\begin{equation}}                                             
\newcommand{\ee}{\end{equation}}                                               
\newcommand{\benn}{\begin{equation*}}                                          
\newcommand{\eenn}{\end{equation*}}                                            
\newcommand{\bqt}{\begin{qt*}\rm}                                              
\newcommand{\eqt}{\end{qt*}}                                                   
\newcommand{\bqtr}{\begin{qt*}\rm\coLR}                                        
\newcommand{\eqtr}{\end{qt*}}                                                  
\newcommand{\beac}{\begin{equation}\begin{array}{c}}                           
\newcommand{\eeac}{\end{array}\end{equation}}                                  
\newcommand{\beqn}{\begin{eqnarray*}}
\newcommand{\eeqn}{\end{eqnarray*}}
\newcommand{\bdf}{\begin{df}}
\newcommand{\bdfhf}{\begin{df}\hfill}
\newcommand{\edf}{\end{df}}
\newcommand{\brmk}{\begin{rmk}}
\newcommand{\brmkhf}{\begin{rmk}\hfill}
\newcommand{\ermk}{\end{rmk}}
\newcommand{\BA}{\mathbf{A}}
\newcommand{\BBG}{\mathbb{G}}
\newcommand{\A}{\mathbb{A}}                                                    
\newcommand{\C}{\mathbb{C}}                                                    
\newcommand{\G}{\mathbb{G}}                                                    
\newcommand{\Q}{\mathbb{Q}}                                                    
\newcommand{\R}{\mathbb{R}}                                                    
\newcommand{\Z}{\mathbb{Z}}                                                    
\newcommand{\QZ}{\mathbb{Q}/\mathbb{Z}}                                        
\newcommand{\coLR}{\textcolor[rgb]{1.00,0,0}}                                  
\DeclareMathOperator{\Hom}{Hom}                                                
\DeclareMathOperator{\Aut}{Aut}                                                
\DeclareMathOperator{\Ext}{\mathbf{Ext}}                                       
\DeclareMathOperator{\Mat}{Mat}                                                
\DeclareMathOperator{\Pic}{Pic}                                                
\DeclareMathOperator{\br}{Br}                                                  
\DeclareMathOperator{\Ker}{Ker}                                                
\renewcommand{\ker}{\Ker}                                                      
\DeclareMathOperator{\Coker}{Coker}		                                       
\DeclareMathOperator{\Id}{Id}                                                  
\DeclareMathOperator{\ev}{ev}                                                  
\DeclareMathOperator{\disc}{disc}                                              
\DeclareMathOperator{\Spec}{Spec}                                              
\newcommand{\et}{\mathrm{\acute{e}t}}                                          
\DeclareMathOperator{\inv}{inv}                                                
\DeclareMathOperator{\diag}{diag}                                              
\DeclareMathOperator{\pr}{pr}                                                  
\newcommand{\gm}{\BBG_m}                                                       
\DeclareMathOperator{\GL}{\mathbf{GL}}                                         
\DeclareMathOperator{\SL}{\mathbf{SL}}                                         
\DeclareMathOperator{\SU}{\mathbf{SU}}                                         
\DeclareMathOperator{\SO}{\mathbf{SO}}                                         
\DeclareMathOperator{\Spin}{\mathbf{Spin}}                                     
\newcommand{\Gred}{G^{\mathrm{red}}}                                           
\newcommand{\Gss}{G^{\mathrm{ss}}}    
\newcommand{\Gu}{G^{\mathrm{u}}}
\newcommand{\Gssu}{G^{\mathrm{ssu}}}                                           
\newcommand{\Gsc}{G^{\mathrm{sc}}}                                             
\newcommand{\Gmult}{G^{\mathrm{mult}}}                                         
\newcommand{\Hmult}{H^{\mathrm{mult}}}                                         
\newcommand{\Gtor}{G^{\mathrm{tor}}}                                           
\DeclareMathOperator{\res}{res}                                                
\newcommand\addtag{\refstepcounter{equation}\tag{\theequation}}
\newcommand{\QQ}{\mathbb{Q}}
\newcommand{\RR}{\mathbb{R}}
\newcommand{\ZZ}{\mathbb{Z}}
\newcommand{\Gm}{\mathbb{G}_m}
\newcommand{\Ga}{\mathbb{G}_a}
\newcommand{\X}{\mathcal{X}}
\newcommand{\Oh}{\mathcal{O}}
\newcommand{\Het}{\mathrm{H}_{\text{ét}}}
\newcommand{\Ho}{\mathrm{H}}
\newcommand{\Gal}{\mathrm{Gal}}
\newcommand{\opp}{^{\mathrm{op}}}
\DeclareSymbolFont{cyrletters}{OT2}{wncyr}{m}{n}
\DeclareMathSymbol{\Sha}{\mathalpha}{cyrletters}{"58}
\title{Description of the strong approximation locus using Brauer-Manin obstruction for homogeneous spaces with commutative stabilizers}
\author{Victor de Vries and Haowen Zhang}
\date{}
\begin{document}
\maketitle
\begin{abstract}
    For a homogeneous space $X$ over a number field $k$, the Brauer-Manin obstruction has been used to study strong approximation for $X$ away from a finite set $S$ of places, and known results state that $X(k)$ is dense in the omitting-$S$ projection of the Brauer-Manin set $\pr_S(X(\A_k)^{\br})$, under certain assumptions. In order to completely understand the closure of $X(k)$ in the set of $S$-adelic points $X(\A_k^S)$, we ask: (i) whether $\pr_S(X(\A_k)^{\br})$ is closed in $X(\A_k^S)$; (ii) whether $X(k)$ is dense in the closed subset of $X(\A_k^S)$ cut out by elements in $\br X$ which induce zero evaluation maps at all the places in $S$. We also ask these questions considering only the algebraic Brauer group. We give answers to such questions for homogeneous spaces $X$ under semisimple simply connected groups with commutative stabilizers. 
\end{abstract}

\vspace{\baselineskip}
\section{Introduction}
Let $X$ be a variety over a number field $k$, and we suppose the set $X(k)$ of rational points is nonempty. We say that $X$ satisfies \textit{strong approximation away from $S$} for a finite set $S$ of places, if the diagonal image of $X(k)$ is dense in the $S$-adelic points $X(\A_k^S)$ (where the places in $S$ are omitted), meaning that we can simultaneously approximate finitely many $k_v$-points with $v\notin S$ by a single $k$-rational point with the
condition that this point is integral at all other finite places outside $S$. A classical case is the affine line $\BA_\QQ^1$ satisfying strong approximation away from $S=\{\infty\}$ containing only the archimedean place, resulting from the Chinese Remainder Theorem. Moreover, a semisimple, almost simple, simply connected linear algebraic group $G$ such that $\prod_{v\in S}G(k_v)$ is not compact satisfies strong approximation away from $S$ (see \cite[Theorem~7.21]{platonov1993algebraic}). 
\par
Colliot-Thélène and Xu \cite{colliot2009brauer} used the Brauer-Manin obstruction to study strong approximation problems for homogeneous spaces $X=G/H$ under semisimple simply connected groups $G$ with connected or finite commutative stabilizers $H$. They proved that if $G$ satisfies strong approximation away from $S$, then $X(k)$ is dense in $\pr_S(X(\A_k)^{\br})$, the image of the Brauer-Manin set $X(\A_k)^{\br}$ under the projection $\pr_S\colon X(\A_k)\rightarrow X(\A_k^S)$. 
Borovoi and Demarche \cite{borovoi2013manin} then generalized this result to homogeneous spaces $X=G/H$ under connected $k$-groups $G$ with connected stabilizers $H$, and proved that 
$X(k)$ is dense in $\pr_S(X(\A_k)^{\br})$ for an $S$ which contains all archimedean places, assuming that the semisimple simply connected part $\Gsc$ satisfies strong approximation away from~$S$, and that a certain Tate-Shafarevich group is finite.
\par
In order to give a precise description of the set of adelic points away from $S$ that can be approximated by rational points, that is, the closure $\overline{X(k)}^S$ of $X(k)$ inside $X(\A_k^S)$, we naturally ask if~$\pr_S(X(\A_k)^{\br})$ is closed in $X(\A_k^S)$. While $\pr_S(X(\A_k)^{\br})$ is closed when $\br X/\br k$ is finite,
the closedness of $\pr_S(X(\A_k)^{\br})$ can fail for a general $X$ (for example~$X=\gm$ with $S=\{2,\infty\}$, see \cite[Proposition~4.8]{demeio2022etale}).
We show that in the case of a homogeneous space $X$ under a semisimple simply connected group $G$, the set $\pr_S(X(\A_k)^{\br})$ is indeed closed. The group $\br X/\br k$ can be infinite when the stabilizers of the homogeneous space $X$ are not connected.

\begin{thm}[Theorem \ref{closed} and Theorem \ref{projectioncommutativecase}]
Let $X$ be a homogeneous space of a semisimple and simply connected algebraic group $G$ over a number field $k$. Then the set~$\pr_S(X(\A_k)^{\br})\subseteq X(\A_k^S)$ is closed. If we suppose moreover that $X=G/H$ with commutative $H$, and $G$ satisfies strong approximation away from $S$, then $\overline{X(k)}^S=\pr_S(X(\A_k)^{\br})$.
\end{thm}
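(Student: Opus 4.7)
The plan is to approach both assertions through the connecting map arising from the fibration $H\to G\to X$ and to exploit Poitou--Tate duality for the (abelianized) cohomology of $H$, building on the Colliot-Th\'el\`ene--Xu and Borovoi--Demarche techniques.

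\textbf{Closedness.} I would first treat the case where $H$ is commutative. The boundary map $\delta\colon X(\A_k)\to H^1(\A_k,H)$ is continuous, and a standard compatibility between the Brauer--Manin pairing and Poitou--Tate duality identifies
\[
X(\A_k)^{\br}=\delta^{-1}\bigl(\mrm{Im}(H^1(k,H)\to H^1(\A_k,H))\bigr).
\]
The image on the right is closed by Poitou--Tate: it is the kernel of a continuous pairing with the dual cohomology of $\hat H$. To descend to $S$-adelic points, I would write
\[
\pr_S(X(\A_k)^{\br})=\delta_S^{-1}\bigl(\pi_S(\mrm{Im}(H^1(k,H))\cap(H^1(\A_k^S,H)\times F_S))\bigr),
\]
where $\pi_S$ forgets the $S$-components and $F_S=\prod_{v\in S}\delta(X(k_v))\subseteq\prod_{v\in S}H^1(k_v,H)$ is finite, since $H^1(k_v,H)$ is finite at every place for $H$ commutative of multiplicative type. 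The intersection on the right is then closed, and $\pi_S$ restricted to a subset with finite $S$-fiber is a proper (hence closed) map. Thus the image is closed in $H^1(\A_k^S,H)$, and pulling back via the continuous map $\delta_S$ yields the desired closedness of $\pr_S(X(\A_k)^{\br})$. For an arbitrary stabilizer, I would reduce to the commutative case by a Borovoi-type abelianization, replacing the cohomology of $H$ by an abelianized variant still satisfying the necessary finiteness and duality.

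\textbf{Equality with the closure.} For the second statement, the inclusion $\overline{X(k)}^S\subseteq\pr_S(X(\A_k)^{\br})$ is immediate from $X(k)\subseteq X(\A_k)^{\br}$ together with the closedness just proved. The reverse inclusion is the density statement of Colliot-Th\'el\`ene--Xu (for $H$ finite commutative) and Borovoi--Demarche (for $H$ connected), which I would extend to arbitrary commutative $H$ by the usual twisting argument: given $(x_v)_{v\notin S}\in\pr_S(X(\A_k)^{\br})$, I would lift its class in $\mrm{Im}(H^1(k,H))$ to a rational point $x_0\in X(k)$ and then approximate the $G(\A_k^S)$-element carrying $x_0$ to $(x_v)$ by an element of $G(k)$, using strong approximation for $G$; the resulting $G(k)$-translate of $x_0$ approximates $(x_v)$.

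\textbf{Main obstacle.} The principal difficulty lies in the first statement for arbitrary stabilizers: verifying that the abelianization transports the Brauer--Manin set faithfully at the topological level, and is compatible with Poitou--Tate, is delicate. The decisive arithmetic input throughout is the finiteness of $H^1(k_v,H)$ at every place for $H$ (or the abelianized cohomology) of multiplicative type, which is what makes $\pi_S$ a closed map; this is precisely what fails in the $X=\gm$ counterexample, where the corresponding local groups are infinite and closedness of $\pr_S(X(\A_k)^{\br})$ breaks down.
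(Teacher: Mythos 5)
Your treatment of the second assertion (the equality $\overline{X(k)}^S=\pr_S(X(\A_k)^{\br})$, granting closedness) is essentially the paper's argument: orthogonality to $\br_1 X$ plus the Poitou--Tate sequence for the multiplicative part $M$ of $H=M\times\Ga^r$ produces a global class in $\Ho^1(k,M)$, its vanishing in $\Ho^1(k,G)$ lifts it to a rational point $x_0$, and strong approximation for $G$ applied through the orbit map $g\mapsto g\cdot x_0$ finishes. That part is fine.

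The gap is in the closedness statement, which must hold for an \emph{arbitrary} stabilizer $H$ and for the \emph{full} Brauer group. Your route runs through Poitou--Tate duality, which only sees the algebraic part $\br_1 X$: the identification $X(\A_k)^{\br}=\delta^{-1}\bigl(\mathrm{Im}(\Ho^1(k,H))\bigr)$ is not a ``standard compatibility'' once transcendental classes are included, and your proposed reduction of the non-commutative case ``by a Borovoi-type abelianization'' is exactly the step you flag as delicate and do not carry out --- abelianized non-abelian cohomology gives no control over the pairing with transcendental elements of $\br X$. The paper avoids all of this with one elementary observation (Lemma \ref{factorviaH1}): since $G$ is semisimple and simply connected, $\br F\to\br G$ is an isomorphism, hence for \emph{every} $\beta\in\br X$ (transcendental or not) the value $x^*\beta$ depends only on the image of $x$ in the non-abelian pointed set $\Ho^1(F,H)$, which is finite for any linear algebraic $H$. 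Consequently the entire Brauer--Manin pairing factors through the restricted product $P$ of the finite sets $\Ho^1(k_v,H)$; each $\ev_\beta$ is continuous on $P$ because it vanishes on $\Ho^1(\Oh_v,\mathcal H)$ for almost all $v$; hence $P^{\br}$ is closed in $P$, the finite-fiber projection $\pi_S$ is a closed map, and $\pr_S(X(\A_k)^{\br})=\delta_S^{-1}(\pi_S(P^{\br}))$ is closed. No duality, no abelianization, and no commutativity of $H$ is needed. To salvage your approach you would have to prove this factorization lemma anyway, at which point the duality machinery becomes superfluous for closedness.
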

For a general variety $X$, there is also another point of view to remedy the potential non-closedness of $\pr_S(X(\A_k)^{\br})$: 
one can define an \textit{a priori} closed subset in $X(\A_k^S)$ containing $X(k)$ using the Brauer-Manin pairing, and hope to relate it to $\overline{X(k)}^S$. The following subgroup of ``trivial at $S$'' elements 
\[\br_S X\coloneq \ker(\br X\rightarrow\prod_{v\in S}\br X_v)\] has been considered, as in  \cite{harari2010brauer}\cite{demeio2022etale} etc, and $\br_S X$ cuts out a closed subset $X(\A_k ^S)^{\br_S}$ in $X(\A_k ^S)$. In \cite{harari2008defaut}, Harari considered algebraic tori $X$ over $k$, and showed $X(k)$ is dense in $\pr_S(X(\A)^{\br})$ for~$S$ containing all archimedean places; in a later paper together with Voloch \cite{harari2010brauer}, they established more precisely that $\overline{X(k)}^S=X(\A_k^S)^{\br_S}$. In the case of homogeneous spaces $X=G/H$ with $G$ and $H$ connected, Demeio \cite{demeio2022etale} showed that $\overline{X(k)}^S=X(\A_k^S)_\bullet^{\br_S}$ for~$S$ \textit{containing no archimedean places}, with the same assumptions on $\Gsc$ and the Tate-Shafarevich group as in \cite{borovoi2013manin}, and $X(\A_k^ S)_\bullet$ denotes the modified adelic space where each $X(k_v)$ for $v$ archimedean is collapsed to its connected components; for arbitrary stabilizers, Demeio gave results using the étale-Brauer-Manin obstruction, and there are known examples in \cite{demarche2017obstructions} with finite non-commutative stabilizers showing that the Brauer-Manin obstruction is not enough.
\par

It is natural to wonder if we can generalize the equality $\overline{X(k)}^S=X(\A_k^S)^ {\br_S}$ to other situations with stabilizers that are not necessarily connected, for arbitrary $S$ possibly containing archimedean places too (noting that it is common in the literature to take $S$ containing all archimedean places in strong approximation problems, contrary to the assumption in \cite{demeio2022etale}). We give examples showing that this is not true in general, when $S$ contains real places. Hence we may need finer information of the Brauer-Manin pairing, and we propose a smaller closed subset $X(\A_k^ S)^ {\br_S^ \prime}$ containing $X(k)$  cut out by
\[\br_S^ \prime X\coloneq \{A\in\br X|\text{ the induced evaluation map }\ev_A\colon X(k_v)\rightarrow\br k_v\text{ is zero for all }v\in S\}.\]
We replace
$\br$ by $\br_1$ when we consider only elements in $\br_1 X\coloneq \ker(\br X\rightarrow\br \overline X)$, the algebraic Brauer group. We have the following chains of inclusions:
\begin{equation*}
 \begin{split}\overline{X(k)}^S\subseteq X(\A_k^S)^ {\br^ \prime_S}&\subseteq X(\A_k^S)^ {\br_S} \subseteq X(\A_k^S)^ {\br_{1,S}} \\
 \overline{X(k)}^S\subseteq X(\A_k^S)^ {\br^ \prime_S}&\subseteq X(\A_k^S)^ {\br^ \prime_{1,S}}.\end{split}\end{equation*}
\par

For homogeneous spaces of semisimple simply connected groups with commutative stabilizers, we give precise conditions under which all the inclusions in each chain become equalities, and under these conditions we characterize the adelic points away from $S$ which can be approximated by rational points.

\begin{thm}[Theorem \ref{main3}] Let $X=G/H$ be a homogeneous space for a semisimple and simply connected algebraic group $G$ over a number field $k$ with commutative stabilizer $H$. Let $S$ be a finite set of places such that $G$ satisfies strong approximation away from $S$. Then we have
\begin{enumerate}[label=(\alph*)]

\item $\overline{X(k)}^S= X(\A_k^S)^ {\br^ \prime_S}= X(\A_k^S)^ {\br_S} = X(\A_k^S)^ {\br_{1,S}}$ if and only if \begin{equation}\label{brauer1intro}\prod_{v\in S_\R}\Ho^1(k_v,H)=r(\Sha^1_S(k,H))+\prod_{v\in S_\R}\delta_v(X(k_v)).\tag{*}\end{equation}
\item $\overline{X(k)}^S= X(\A_k^S)^ {\br^ \prime_S}= X(\A_k^S)^ {\br^ \prime_{1,S}}$ if and only if \begin{equation}\label{brauer1'intro}\prod_{v\in S_\R}\langle\delta_v(X(k_v))\rangle\subseteq r(\Sha^1_S(k,H))+\prod_{v\in S_\R}\delta_v(X(k_v)).\tag{**}\end{equation}
\end{enumerate} Here $S_{\R}$ is the set of real places in $S$, the map $\delta_v\colon X(k_v)\to \Ho^1(k_v,H)$ is the connecting map, $\langle\delta_v(X(k_v))\rangle$ is the subgroup generated by $\delta_v(X(k_v))$ inside $\Ho^1(k_v,H)$, and $r$ denotes the restriction map $r\colon\Ho^1(k,H)\to \prod_{v\in S_\R}\Ho^1(k_v,H)$, and $\Sha^1_S(k,H)\coloneq \ker(\Ho^1(k,H)\to~\prod_{v\notin S}\Ho^1(k_v,H))$.

\end{thm}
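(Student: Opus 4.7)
The plan is to build on the equality $\overline{X(k)}^S = \pr_S(X(\A_k)^{\br})$ established earlier in the paper (Theorem \ref{projectioncommutativecase}) and reduce both claims to surjectivity statements for the projection $\pr_S$ onto each candidate set. Since the inclusions
\[ \pr_S(X(\A_k)^{\br}) \subseteq X(\A_k^S)^{\br'_S} \subseteq X(\A_k^S)^{\br_S} \subseteq X(\A_k^S)^{\br_{1,S}} \]
always hold, together with the analogous chain through $\br'_{1,S}$, each stated equivalence becomes a precise description of when the outer term equals the inner one.

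First, I would translate all four Brauer conditions into the cohomology of the commutative stabilizer $H$. Since $H$ is commutative, the connecting map $\delta_v \colon X(k_v) \to \Ho^1(k_v,H)$ takes values in an abelian group, and for $G$ semisimple and simply connected the vanishing $\Ho^1(k_v,G) = 0$ (Kneser's theorem for non-archimedean $v$, and triviality for $v$ complex) forces $\delta_v$ to be surjective at every place outside $S_\R$. A Poitou--Tate style duality, together with the long exact sequence coming from $1 \to H \to G \to X \to 1$, then identifies $X(\A_k)^{\br_1}$ with the preimage under $\prod_v \delta_v$ of the image of the global diagonal $\Ho^1(k,H) \to \prod_v \Ho^1(k_v,H)$, and further identifies $X(\A_k)^{\br}$ with the same preimage, because $H$ being commutative makes the extra transcendental part of $\br X$ irrelevant for the pairing with adelic points in the image of $\delta$. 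Using the surjectivity of $\delta_v$ at non-archimedean $v \in S$ to absorb any correction there, an adelic point $(x^v)_{v \notin S}$ lies in $\pr_S(X(\A_k)^{\br})$ precisely when its image in $\prod_{v \notin S} \Ho^1(k_v,H)$ modulo the image of $\Ho^1(k,H) \to \prod_{v \notin S}\Ho^1(k_v,H)$ can be lifted to a class whose $S_\R$-component lies in $\prod_{v \in S_\R} \delta_v(X(k_v))$, i.e.\ the global correction needed at $S_\R$ lies in $r(\Sha^1_S(k,H)) + \prod_{v \in S_\R} \delta_v(X(k_v))$.

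The same recipe applied to the four candidate sets replaces $\prod_{v \in S_\R} \delta_v(X(k_v))$ by the subset of $\prod_{v \in S_\R} \Ho^1(k_v,H)$ dual to the corresponding Brauer subgroup. An element $A \in \br_S$ must evaluate trivially only at one local point for each $v \in S$, so no constraint remains at $S_\R$, and the relevant target is all of $\prod_{v \in S_\R} \Ho^1(k_v,H)$; this gives condition (\ref{brauer1intro}). An element $A \in \br'_S$ has $\ev_A$ identically zero on each $X(k_v)$ with $v \in S$, so the pairing only sees the subgroup generated by $\delta_v(X(k_v))$, giving condition (\ref{brauer1'intro}). The coincidence of $X(\A_k^S)^{\br_S}$ with $X(\A_k^S)^{\br_{1,S}}$ (and similarly with $\br'$) follows because for commutative $H$ the full Brauer-Manin pairing on adelic points in the image of $\delta$ factors through $\br_1$. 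The principal obstacle will be establishing the Poitou--Tate dictionary in a form fine enough to read off exactly which subsets of $\prod_{v \in S_\R} \Ho^1(k_v,H)$ are achievable, with due care at real places where $\delta_v(X(k_v))$ is in general only a coset-like subset and not a subgroup of $\Ho^1(k_v,H)$; this is precisely the reason the two conditions (\ref{brauer1intro}) and (\ref{brauer1'intro}) differ and correspond to distinct equality chains.
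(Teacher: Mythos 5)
Your overall skeleton --- reduce each chain of equalities to the single equality between $\overline{X(k)}^S$ and the outermost set $X(\A_k^S)^{\br_{1,S}}$ (resp.\ $X(\A_k^S)^{\br'_{1,S}}$) via the inclusion chains, then translate membership in each set into conditions on classes in $\prod_v\Ho^1(k_v,H)$ lifted from $\Ho^1(k,H)$ --- is the paper's strategy. But one step of your execution is false and one essential ingredient is missing. The false step is the claim that for commutative $H$ ``the full Brauer--Manin pairing on adelic points in the image of $\delta$ factors through $\br_1$,'' which you use to identify $X(\A_k^S)^{\br_S}$ with $X(\A_k^S)^{\br_{1,S}}$ and $X(\A_k^S)^{\br'_S}$ with $X(\A_k^S)^{\br'_{1,S}}$ unconditionally. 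The paper's own Proposition \ref{egSU/T2} refutes this: for $X=\SU_{2,1}/T[2]$ one has $X(\A_\Q^\infty)^{\br'_\infty}\subsetneq X(\A_\Q^\infty)^{\br'_{1,\infty}}$, witnessed by a transcendental class $\alpha$ whose evaluation map on $\Ho^1(\R,T[2])\cong(\Z/2)^2$ is not a homomorphism (three classes go to $0$, one to $\tfrac12$), hence cannot agree with the pairing against any algebraic class; since $\delta_v$ is surjective at finite places, this nonlinearity is visible on adelic points. The correct way to dispose of the transcendental part is precisely the sandwich you set up in your first paragraph and then abandon: under (*) one proves $X(\A_k^S)^{\br_{1,S}}\subseteq\overline{X(k)}^S$ directly, and every intermediate set is squeezed; if (*) fails one exhibits a point of $X(\A_k^S)^{\br_{1,S}}\setminus\overline{X(k)}^S$, which already breaks the chain. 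No unconditional comparison of $\br_S$ with $\br_{1,S}$ (or of $\br'_S$ with $\br'_{1,S}$) is needed, and none holds.

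The missing ingredient concerns both ``only if'' directions and the characterization of orthogonality to $\br'_{1,S}$: one needs the surjectivity of $\Ho^1_{S\setminus S_\R}(k,\hat M)\to\prod_{v\in S_\R}\Ho^1(k_v,\hat M)$ and, on the other side, of $\Ho^1(k,M)\to\prod_{v\in \Omega_\R}\Ho^1(k_v,M)$, which the paper establishes via the Poitou--Tate sequence together with a Chebotarev argument (Lemma \ref{surjective}). Without this you can neither realize an arbitrary tuple $(y_w)_{w\in S_\R}$ outside $r(\Sha^1_S(k,M))+\prod_{w\in S_\R}\delta_w(X(k_w))$ by a global class that is trivial at the remaining real places --- which is what lets you lift it to an adelic point via $\Ho^1(k_v,G)=0$ and Theorem \ref{kneserharder} and so contradict Lemma-\ref{S-closure}-type membership in $\overline{X(k)}^S$ --- nor produce an algebraic class in $\br'_{1,S}X$ that annihilates $\langle\delta_w(X(k_w))\rangle$ at a single real place $w$ while detecting a given class outside it. Your proposal gestures at ``a Poitou--Tate style duality'' but does not isolate this surjectivity, and it is exactly where the arithmetic content of the theorem lives.
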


In particular, the condition (\ref{brauer1'intro}) is satisfied when $\delta_v(X(k_v))$ is already a subgroup in $\Ho^1(k_v,H)$ for all $v\in S_{\R}$, and this is the case when $H$ is central in $G$, for example, when $X$ is an adjoint group. Conditions (\ref{brauer1intro}) and (\ref{brauer1'intro}) are automatically satisfied when $S$ does not contain real places, or when $\Ho^1(k_v,G)$ is trivial for all $v\in S_{\R}$ because $\delta_v$ then becomes surjective.
\par We also establish examples with toric stabilizers and examples with finite commutative stabilizers to show that when the conditions (\ref{brauer1intro}) and (\ref{brauer1'intro}) are not satisfied, how the equalities in the chains can fail at intermediate steps. More precisely, we give examples in
\begin{itemize}
\item Propositions \ref{egSO} and \ref{egSpin/T} showing $\overline{X(k)}^S= X(\A_k^S)^ {\br^ \prime_S}\subsetneq X(\A_k^S)^ {\br_S},$
\item Proposition \ref{egSU/T2} showing $\overline{X(k)}^S= X(\A_k^S)^ {\br^ \prime_S}\subsetneq X(\A_k^S)^ {\br^{\prime}_{1,S}},$
\item Proposition \ref{extorus} and \ref{egSU/T4} showing $\overline{X(k)}^S\subsetneq X(\A_k^S)^ {\br^ \prime_S}.$
\end{itemize}
As a consequence, we know that the Brauer-Manin set cut out by $\br_S X$ and $\br_S^{\prime} X$ can be different, and that transcendental elements in the Brauer group can play a role. 
\par At the end, we discuss possible generalizations to homogeneous spaces of connected linear groups with commutative stabilizers, presenting conditional results and clarifying the challenges in reaching the fully general case.

\section{Notation and preliminaries}
\label{notation}

Throughout the paper we will use the following notation.
\begin{itemize}
    \item $k$ is a number field with set of places $\Omega$ and set of real places $\Omega_\R$
    \item $k_v$ is the completion of $k$ at the place $v$, with ring of integers $\mathcal O_v$ if $v$ is non-archimedean
    \item $S_\R\coloneq S\cap\Omega_\R$ for a subset $S\subseteq\Omega$
    \item $\mathcal O_S$ is the ring of $S$-integers, that is, the ring of elements in $k$ which are integral outside $S$
    \item $X$ is a $k$-variety, and $\mathcal X$ is a separated $\mathcal O_T$-scheme of finite type such that the generic fiber of $\mathcal{X}$ is $X$ (such an $\mathcal{X}$ is called an $\Oh_T$-model of $X$)
    \item $X(\A_k^S)\coloneq \prod_{v\in\Omega\backslash S}^\prime(X(k_v),\mathcal X(\mathcal O_v))$ is the set of $S$-adelic points on $X$ equipped with the restricted product topology; we write $X(\A_k)$ for $S=\emptyset$ and $\pr_S\colon X(\A_k)\to X(\A_k^S)$ is the natural projection map
    \item $\overline{X(k)}^S$ is the closure of $X(k)$ in $X(\A_k^S)$    
    \item $\br X\coloneq \Ho^2_{\et}(X,\Gm)$ is the (cohomological) Brauer group of $X$
    \item $\br_1 X\coloneq \ker(\br X\to\br\overline X)$ is the algebraic Brauer group of $X$
    \item $\br_S X\coloneq\ker(\br X\rightarrow\prod_{v\in S}\br X_{k_v})$
    \item $\br_S^ \prime X\coloneq \{A\in\br X|\text{ the induced evaluation map }\ev_A\colon X(k_v)\rightarrow\br k_v\text{ is zero for all }v\in~S\}$
    \item $\br_{1,S}X\coloneq \br_1 X\cap \br_S X$
    \item $\br^\prime_{1,S}X\coloneq \br_1 X\cap \br^\prime_S X$
    \item $\Ho^1_S(k,-)\coloneq \ker(\Ho^1(k,-)\to {\prod}_{v\in S}\Ho^1(k_v,-))$
    \item $\Sha^1_S(k,-)\coloneq \ker(\Ho^1(k,-)\to {\prod}_{v\notin S}\Ho^1(k_v,-))$ and we write $\Sha^1(k,-)$ for $S=\emptyset$
    \item $r_v$ is the restriction map $\Ho^1(k,-)\to \Ho^1(k_v,-)$ on Galois cohomology 
\end{itemize}

Let $G$ be 
an algebraic group over $k$, and let $X$ be a homogeneous space for $G$ with point $o\in X(k)$ with stabilizer $H$. 
For any field $L/k$ one has an exact sequence of pointed sets that comes from non-abelian Galois cohomology:
\begin{equation*}
    G(L)\to X(L)\xrightarrow{\delta_L}\Ho^1(L,H(\overline{L}))\to \Ho^1(L,G(\overline{L})).
\end{equation*}
When $L=k_v$, we denote by $\delta_v$ the connecting map $X(k_v)\rightarrow\Ho^1(k_v,H)$.
In order to make a similar exact sequence on the adèles of $k$, we have to make sure that we can give a well-defined restricted product.  We do this as follows:

There exists a finite set of places $T$ of $k$ containing all archimedean ones satisfying the following conditions: There are integral smooth models $\mathcal{G}$, $\mathcal{X}$, $\mathcal{H}$ of $G$, $X$ and $H$ over $\Spec(\Oh_S)$ (here by models of $\mathcal{G}$ and $\mathcal{H}$ we mean models of group schemes) such that $\mathcal{G}\to \mathcal{X}$ extends $G\to X$ and has stabilizer $\mathcal{H}$ at $o\in \X(\Oh_S)$ (the point $o$ has been lifted to an $\Oh_S$-point). Using this we make the following definition.

\begin{df}
Let $T$ and $\mathcal{G}$ be as above and let $S$ be an other finite set of places. Define the set $\prod'_{v\notin S}\Ho^1(k_v,G)$ to be the set of points $(y_v)_{v\notin S}$ in $\prod_{v\notin S}\Ho^1(k_v,G)$ such that for all but finitely many $v\notin T\cup S$, there is a $z_v\in \Het^1(\Oh_v,\mathcal{G})$ mapping to $y_v$.
\end{df}

The definition is independent of the choices of $T$ and $\mathcal{G}$ since for a different model of $G$, there is a large enough $T'$ such that the new model and $\mathcal{G}$ are isomorphic over $\Oh_{T'}$. We make $\prod'_{v\notin S}\Ho^1(k_v,G)$ into a topological space by endowing each $\Ho^1(k_v,G)$ with the discrete topology and then giving $\prod'_{v\notin S}\Ho^1(k_v,G)$ the restricted product topology.

\begin{rmk}\label{2.2}
The reader may wonder if $\Het^1(\Oh_v,\mathcal{G})\to \Ho^1(k_v,G)$ is actually injective? This is indeed true for all $v\notin T$ with $T$ as above under the condition that $G$ is either connected or commutative. For the connected case $\Het^1(\Oh_v,\mathcal{G})$ is trivial, which follows from the fact that $\Het^1(\Oh_v,\mathcal{G})=\Ho^1(\mathbb{F}_v,\mathcal{G}_v)=\{*\}$ where $\mathcal{G}_v$ denotes the reduction. The first equality here is by Hensel's lemma and the second one is Lang's theorem. For the commutative case, by a diagram chase it suffices to prove injectivity in the case of a finite commutative group~$F$. Injectivity of $\Het^1(\Oh_v,\mathcal{F})\to \Ho^1(k_v,F)$ follows from the valuative criterion for properness. 
\end{rmk}

We now make the following proposition.

\begin{prop}\label{pointedsets}
For a finite set $S$ of places, there is an exact sequence of pointed topological spaces: \begin{equation*}
G(\A_k^S)\to X(\A_k^S)\to  {\prod}^\prime_{v\notin S}\Ho^1(k_v,H)\to  {\prod}^\prime_{v\notin S}\Ho^1(k_v,G).
\end{equation*}
Moreover, elements in $X(\A_k^S)$ have the same image in ${\prod}^\prime_{v\notin S}\Ho^1(k_v,H)$ if and only if they are in the same orbit under the action of $G(\A_k^S)$.
\begin{proof}
There is an exact sequence $G(k_v)\to X(k_v)\to \Ho^1(k_v,H)\to \Ho^1(k_v,G)$, and there exists a large enough $T\subseteq\Omega$ such that for all $v\notin T$, one also has such an exact sequence for $\Oh_v$-points with respect to the relevant models (where with $\Ho^1(\Oh_v,-)$ we mean the étale cohomology) (see \cite[Proposition III.3.2.2]{cohomologienonabelienne}. This shows the exactness of the sequence of adelic points. The statement in the proposition concerning the orbits is a direct consequence of Corollary III.3.2.3 in \cite{cohomologienonabelienne}. By \cite[Proposition 4.2]{ČESNAVIČIUS_2015} all of the maps are continuous. 
\end{proof}
\end{prop}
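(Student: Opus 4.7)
The plan is to build the sequence place by place, assemble it via restricted products, and then address the orbit description and topology separately.

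First I would fix the local picture: for every place $v$, classical non-abelian Galois cohomology yields an exact sequence of pointed sets $G(k_v)\to X(k_v)\to \Ho^1(k_v,H)\to \Ho^1(k_v,G)$, which takes care of exactness at the level of each factor. To pass to restricted products one needs an analogous sequence at the integral level for almost all $v$. Using the smooth models $\mathcal{G},\X,\mathcal{H}$ over $\Oh_T$ fixed before the statement, I would invoke the non-abelian cohomology formalism of Giraud (as in \cite[Proposition~III.3.2.2]{cohomologienonabelienne}) to obtain, for every $v\notin T$, an exact sequence $\mathcal{G}(\Oh_v)\to \X(\Oh_v)\to \Het^1(\Oh_v,\mathcal{H})\to \Het^1(\Oh_v,\mathcal{G})$ compatible with the generic-fibre sequence.

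With this integral compatibility in hand, each of the four maps in the statement sends the restricted product into the restricted product: if $(x_v)\in X(\A_k^S)$ with $x_v\in \X(\Oh_v)$ for almost all $v$, then its image in $\prod_{v\notin S}\Ho^1(k_v,H)$ has local components coming from $\Het^1(\Oh_v,\mathcal{H})$ at those same places, and similarly for the last arrow (using the injectivity from Remark~\ref{2.2} to identify these integral classes with a well-defined subset of $\Ho^1(k_v,-)$). Exactness at each stage of the adelic sequence then follows from local exactness pointwise, since kernels and images commute with restricted products when compatibilities are met at almost every place. The continuity assertions with respect to the discrete topology on each $\Ho^1(k_v,-)$ and the restricted-product topology on the whole I would cite from \cite[Proposition~4.2]{ČESNAVIČIUS_2015} rather than verify by hand.

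For the orbit statement I would reduce to the local case: over each $k_v$, two points of $X(k_v)$ have the same image under $\delta_v$ if and only if they lie in the same $G(k_v)$-orbit, which is Corollary III.3.2.3 of \cite{cohomologienonabelienne}. Applying this componentwise and combining with the integral-model compatibility above produces the adelic orbit description: two $S$-adelic points with the same image in $\prod^\prime_{v\notin S}\Ho^1(k_v,H)$ differ by an element of $G(\A_k^S)$, the integrality of that global group element being precisely what the integral exact sequence affords.

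The main obstacle I anticipate is verifying carefully that all four arrows respect the restricted-product structure on the cohomology sets, i.e.\ that what counts as an ``integral'' class really matches on both sides of each map for almost every $v$. For the last map this rests on the injectivity of $\Het^1(\Oh_v,\mathcal{G})\to \Ho^1(k_v,G)$ for almost all $v$, an issue already flagged in Remark~\ref{2.2} and handled there via Hensel's lemma and Lang's theorem in the connected case and via the valuative criterion for properness in the finite commutative case. Everything else is a relatively routine globalization of the local non-abelian exact sequence.
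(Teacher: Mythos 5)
Your proposal follows the same route as the paper: local non-abelian exact sequences, the integral analogue over $\Oh_v$ for almost all $v$ via \cite[Proposition~III.3.2.2]{cohomologienonabelienne}, the orbit description from Corollary~III.3.2.3 there, and continuity from \cite[Proposition~4.2]{ČESNAVIČIUS_2015}. The extra care you take in checking that all four arrows respect the restricted-product structure is exactly the detail the paper leaves implicit, so the argument is correct and essentially identical.
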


The last term in the exact sequence is quite simple by the following theorem.

\begin{thm}\label{kneserharder}\cite[Theorem 6.4 and Theorem 6.6]{platonov1993algebraic}
Let $G$ be a semisimple and simply connected group over a non-archimedean local field $k_v$. The cardinality of $\Ho^1(k_v,G)$ is $1$. If $G$ is defined over a number field $k$, the natural map $\Ho^1(k,G)\to \prod_{v\in \Omega_\R}\Ho^1(k_v,G)$ is a bijection. 
\end{thm}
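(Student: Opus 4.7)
The plan is to prove the two statements by reducing to the absolutely almost simple simply connected case and then arguing type-by-type, following the strategy of \cite{platonov1993algebraic}. For the first reduction, since $G$ is simply connected and semisimple, it splits as a direct product of its simple factors, each of which is of the form $\Res_{\ell/k}G^\prime$ for some finite separable extension $\ell/k$ and absolutely almost simple simply connected $\ell$-group $G^\prime$; by Shapiro's lemma $\Ho^1(k,\Res_{\ell/k}G^\prime)=\Ho^1(\ell,G^\prime)$ and similarly for completions after extending places, so both statements reduce to the absolutely almost simple simply connected case.

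For the local statement, I would first settle the quasi-split case: a simply connected quasi-split $G$ over $k_v$ has a Borel $B=TU$ with $T$ induced (because $G$ is simply connected, the character lattice is the coroot lattice permuted by Galois), so $\Ho^1(k_v,T)=0$ by Shapiro and $\Ho^1(k_v,U)=0$ because $U$ has a Galois-stable filtration with successive quotients isomorphic to $\ga$; a standard twisting argument then deduces $\Ho^1(k_v,G)=0$. For non-quasi-split forms one resorts to the classification. For inner forms of type $A$ one has $G=\SL_1(D)$ with $\Ho^1(k_v,G)=k_v^{\times}/\Nrd(D^{\times})=0$, since the reduced norm on a local division algebra is surjective. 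For types $B,C,D$ and outer forms of $A$ one uses long exact sequences from the relevant Clifford, symplectic, or hermitian constructions, combined with the fact that $\br k_v$ and cohomology of induced tori are well controlled. The exceptional types $E_6,E_7,E_8,F_4,G_2$ require arguments based on Bruhat--Tits theory and an essentially case-by-case analysis; this is the main obstacle and constitutes the central technical content of Kneser's theorem.

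For the global statement, the local vanishing at non-archimedean $v$ immediately reduces the target of the restriction map to $\prod_{v\in\Omega_\R}\Ho^1(k_v,G)$. Applying the same first reduction, one is again reduced to absolutely almost simple $G$ and argues by type. Classical types follow from classical Hasse principles: Hasse--Minkowski for quadratic forms handles most of types $B,D$; the Albert--Brauer--Hasse--Noether theorem handles $\SL_1(D)$ and its involutions; hermitian analogues handle the remaining classical cases. For exceptional groups, one invokes Harder's Hasse principle, with the case of $E_8$ completed by Chernousov; these exceptional cases are again the main obstacle. Together these yield both injectivity (a global class trivial at every real place is globally trivial) and surjectivity (any prescribed family of local classes at real places lifts to a global class) of the restriction map.
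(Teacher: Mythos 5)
The paper does not actually prove this statement: it is quoted with a citation to Platonov--Rapinchuk (Kneser's local vanishing theorem and the Kneser--Harder--Chernousov Hasse principle), so there is no in-paper argument to compare against. Your outline is a faithful summary of the strategy in that reference --- the reduction via $G=\prod_i\Res_{\ell_i/k}G_i'$ and Shapiro's lemma to the absolutely almost simple case, the case-by-case treatment of classical types via algebras with involution, and the attribution of the exceptional cases to Kneser, Harder and Chernousov --- and you correctly locate where the real work lies.

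However, one step you present as routine is genuinely false as stated: you claim that for quasi-split $G$ over $k_v$ with Borel $B=TU$, the vanishing of $\Ho^1(k_v,T)$ (T induced) and $\Ho^1(k_v,U)$ yields $\Ho^1(k_v,G)=1$ by ``a standard twisting argument.'' What these vanishings give is that the map $\Ho^1(k_v,B)\to\Ho^1(k_v,G)$ has trivial image; they say nothing about its surjectivity. A class $[\sigma]\in\Ho^1(k_v,G)$ lies in the image of $\Ho^1(k_v,B)$ only if the twisted flag variety $_{\sigma}(G/B)$ has a $k_v$-point, and establishing that for every class is essentially the whole theorem. The failure of the implication is visible over $\R$: for split $G=\Spin_n$ one has $\Ho^1(\R,B)=1$ yet $\Ho^1(\R,G)\neq 1$ in general, which is exactly why the non-archimedean hypothesis is indispensable and cannot enter through the Borel subgroup alone. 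The genuine proofs of the local statement either run Kneser's classification-based analysis for every type (not just the non-quasi-split forms) or use Bruhat--Tits theory to descend to the residue field and apply Lang/Steinberg there. Since your proposal in any case defers the case-by-case content and the exceptional types to the literature, the honest conclusion is that this result should be treated --- as the paper treats it --- as a cited black box rather than something reproved in passing.
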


\section{Closedness of the projection $\pr_S(X(\A_k)^{\br})$}

There is a natural pairing 
\begin{align*}
X(\A_k)\times \br X&\rightarrow \QZ \\
(\{x_v\},A)&\mapsto \sum_{v\in\Omega}\inv_v(\ev_A (x_v))
\end{align*}
known as the Brauer-Manin pairing, 
where $\ev_A:\br X\rightarrow \br k_v$ is the evaluation map induced by $A$, and $\inv_v\colon \br k_v\hookrightarrow \QZ$ is the local invariant map from Class Field Theory. We denote by $X(\A_k
)^{\br}$ (resp. $X(\A_k
)^{\br_1}$) the set of adelic points which are orthogonal to $\br X$ (resp. $\br_1 X$). The set of rational points $X(k)$ is contained in $X(\A_k
)^{\br}\subseteq X(\A_k
)^{\br_1}$ by the Albert-Brauer-Hasse-Noether exact sequence
\[0\rightarrow\br k\rightarrow\bigoplus_{v\in \Omega}\br k_v\xrightarrow{\Sigma\inv_v}\QZ\rightarrow 0.\]
\par Colliot-Thélène and Xu \cite{colliot2009brauer} proved that if $X=G/H$ is a homogeneous space where $G$ is semisimple and simply connected, satisfying strong approximation away from a finite set of places $S$, and $H$ is connected or finite commutative, then the set of rational points $X(k)$ is dense in $\pr_S(X(\A_k)^{\br})$. Note that the case of finite commutative stabilizers can be generalized to stabilizers of multiplicative type (see \cite[Theorem~4.1]{harari2013descent}).
\par In this section, we study the question of the closedness of $\pr_S(X(\A_k)^{\br})$ in order to completely characterize $\overline{X(k)}^S$. For homogeneous $X=G/H$ under semisimple simply connected groups $G$
 with \textit{connected} stabilizers $H$, we have that $\br X/\br k$ is isomorphic to $\Pic H$ (see \cite[Proposition~2.10.(ii)]{colliot2009brauer}) which is finite. A calculation shows that finiteness of $\br X/\br k$ implies that $\pr_S(X(\A_k)^{\br})$ is indeed closed in $X(\A_k^S)$. The group $\br X/\br k$ can be infinite when the stabilizers are not connected. However, we show that in fact $\pr_S(X(\A_k)^{\br})$ is closed in $X(\A_k^S)$ for all homogeneous spaces of semisimple simply connected groups.

\begin{lem}\label{factorviaH1}
Let $X$ be a homogeneous space for a semisimple and simply connected group $G$ with stabilizer $H$ at $o\in X(F)$ where $F$ is a field of characteristic $0$. For $x\in X(F)$ and $\beta\in \br{X}$, $x^*\beta\in \br F$ depends only on the image of $x$ in $\Ho^1(F,H)$.
\begin{proof}
The statement is equivalent to $(g\cdot x)^*\beta=x^*\beta$ for all $g\in G(F)$, $x\in X(F)$ and $\beta\in \br{X}$. Consider the following composition, which equals $(g\cdot x)\colon\Spec F\to X$: $$\Spec F\overset{(g,x)}{\to} G\times_F \{x\}\overset{\iota_x}{\to} G\times_F X\overset{\rho}{\to} X$$
This implies that $(g\cdot x)^*\beta=(g,x)^*(\iota_x^*(\rho^*\beta))$ for all $g\in G(F)$. But this term does not depend on $g$, since $G\times_F \{x\}\cong G$ and because the canonical map $\br F\to \br G$ is an isomorphism by $G$ being semisimple and simply connected (cf. \cite[Proposition~9.2.1]{colliot2021brauer}).
\end{proof}
\end{lem}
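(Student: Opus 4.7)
The plan is to reduce the assertion to an invariance statement under the $G(F)$-action, and then to invoke the standard computation of the Brauer group of a semisimple simply connected group. By the exact sequence of pointed sets
\[ G(F)\to X(F)\to \Ho^{1}(F,H)\to \Ho^{1}(F,G) \]
recalled at the beginning of Section \ref{notation}, two $F$-points of $X$ have the same image in $\Ho^{1}(F,H)$ precisely when they lie in the same $G(F)$-orbit on $X(F)$. Hence the lemma amounts to showing that $(g\cdot x)^{*}\beta=x^{*}\beta$ for every $g\in G(F)$, $x\in X(F)$, and $\beta\in\br X$.

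To exploit the action, I would factor the morphism $(g\cdot x)\colon\Spec F\to X$ through the action map $\rho\colon G\times_{F}X\to X$, writing it as
\[ \Spec F\xrightarrow{(g,x)} G\times_{F}\{x\}\xrightarrow{\iota_{x}} G\times_{F}X\xrightarrow{\rho} X, \]
so that $(g\cdot x)^{*}\beta=(g,x)^{*}\iota_{x}^{*}\rho^{*}\beta$. The intermediate class $\iota_{x}^{*}\rho^{*}\beta$ lies in $\br(G\times_{F}\{x\})\cong\br G$, reducing the question to the Brauer group of $G$ itself.

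The key input is the classical theorem that for a semisimple simply connected group $G$ over a field $F$ of characteristic zero, the canonical structural map $\br F\to\br G$ is an isomorphism. Granting this, $\iota_{x}^{*}\rho^{*}\beta$ is the structural pullback of a unique class $\alpha_{x}\in\br F$, and its further pullback along $(g,x)$ recovers $\alpha_{x}$ itself, since $(g,x)$ is a section of the structure morphism $G\to\Spec F$. Consequently $(g\cdot x)^{*}\beta=\alpha_{x}$ independently of $g$; specialising to $g=e$ gives $x^{*}\beta=\alpha_{x}$ as well, and the desired equality follows.

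The only substantive ingredient is the isomorphism $\br F\cong\br G$, which is a well-known result (for instance in Colliot-Thélène--Skorobogatov); everything else is a formal manipulation of pullbacks via the factorisation above. I therefore do not anticipate any real obstacle: the whole argument hinges on correctly isolating the piece of the pullback that factors through $\br G$ and then using that this group is as small as possible.
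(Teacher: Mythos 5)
Your proposal is correct and follows essentially the same route as the paper: the same reduction via the orbit description of fibres of $X(F)\to \Ho^1(F,H)$, the same factorisation of $g\cdot x$ through $G\times_F\{x\}\to G\times_F X\xrightarrow{\rho} X$, and the same appeal to $\br F\isoto \br G$ for $G$ semisimple simply connected. Your extra remarks (that $(g,x)$ is a section of the structure morphism and the specialisation to $g=e$) merely make explicit what the paper leaves implicit.
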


Using the previous lemma we can make the following definition.

\begin{df}
Let $X,H$ and $G$ be as in the previous lemma. Define the set $P$ by: $$P:=\mathrm{Im}\left( X(\A_k)\to {\prod}^\prime \Ho^1(k_v,H)\right)$$  We set $P^S:=\mathrm{Im}\left(P\to {\prod}^\prime_{v\notin S}\Ho^1(k_v,H)\right)$. We endow $P$ (resp. $P^S$) with the subspace topology from ${\prod}^\prime \Ho^1(k_v,H)$ (resp. ${\prod}^\prime_{v\notin S} \Ho^1(k_v,H)$).
\end{df}

\begin{rmk}
We can realize $P$ as a restricted product with factors $I_v=\mathrm{Im}\left(X(k_v)\to \Ho^1(k_v,H)\right)$ in an obvious way.
As a consequence of Theorem \ref{kneserharder} we have $I_v=\Ho^1(k_v,H)$ for all nonarchimedean places $v$. For real places $v$, the set $I_v$ corresponds naturally to the set of path components of $X(k_v)$ for the analytic topology (see \cite[Lemma 16.1]{borovoi2016realhomogenousspacesgalois}).
\end{rmk}

\begin{lem}\label{pairingwithP}
The Brauer-Manin pairing $X(\A_k)\times \br X\rightarrow \QZ$ factors through a well-defined pairing $P\times \br X\rightarrow \QZ $. For $A\in \br X$, the induced evaluation map $\ev_A: P\rightarrow \QZ$ is continuous.\end{lem}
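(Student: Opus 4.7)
The plan is to prove the two assertions separately: the factorization follows from applying Lemma \ref{factorviaH1} place-by-place, and continuity is the standard restricted-product argument combined with the fact that $\ev_A$ is almost everywhere trivial on integral points.

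For the factorization, I would fix $A\in\br X$ and, for each place $v$, apply Lemma \ref{factorviaH1} over $k_v$ to obtain a well-defined map $\phi_{v,A}\colon I_v\to\br k_v$ characterized by $\phi_{v,A}(\delta_v(x_v))=\ev_A(x_v)$ for any lift $x_v\in X(k_v)$. The sum $(y_v)\mapsto\sum_v\inv_v(\phi_{v,A}(y_v))$ then gives a well-defined function on $P$ (once the sum is shown to be finite, which is built into the continuity argument below) that agrees on any adelic lift with the Brauer--Manin pairing. This is exactly the desired factored pairing $P\times\br X\to\QZ$.

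For continuity of $\ev_A\colon P\to\QZ$, the key is that $A$ spreads out: I would enlarge $T$ to a finite set $S_A\supseteq T$ such that $A$ extends to a class in $\br\mathcal{X}_{\Oh_{S_A}}$; then for $v\notin S_A$ and any integral point $\tilde x_v\in\mathcal{X}(\Oh_v)$, the evaluation $\ev_A(\tilde x_v)$ lies in $\br\Oh_v=0$. Since $G$ is semisimple and simply connected, hence connected, Remark \ref{2.2} gives $\Het^1(\Oh_v,\mathcal{G})=0$ for all $v\notin T$, so the exact sequence of pointed sets
\[\mathcal{X}(\Oh_v)\to\Het^1(\Oh_v,\mathcal{H})\to\Het^1(\Oh_v,\mathcal{G})\]
from \cite[Proposition III.3.2.2]{cohomologienonabelienne} shows that every class in $\Het^1(\Oh_v,\mathcal{H})$ lifts to an $\Oh_v$-point of $\mathcal{X}$. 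Combining, $\phi_{v,A}$ vanishes on the image of $\Het^1(\Oh_v,\mathcal{H})$ in $I_v$ for every $v\notin S_A$.

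To conclude, given $(y_v^0)\in P$ with value $c_0=\ev_A((y_v^0))$, I would pick a finite set $S_1\supseteq S_A$ such that $y_v^0$ lies in the image of $\Het^1(\Oh_v,\mathcal{H})$ for all $v\notin S_1$, and then consider the basic open neighborhood
\[U=\prod_{v\in S_1}\{y_v^0\}\times{\prod_{v\notin S_1}}^{\prime}\Het^1(\Oh_v,\mathcal{H})\]
of $(y_v^0)$ in the restricted product $\prod^\prime\Ho^1(k_v,H)$. On $U\cap P$ the value of $\ev_A$ is constantly $c_0$: the $S_1$-coordinates are fixed and the outside-$S_1$ coordinates contribute zero by the previous paragraph. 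Since $\QZ$ is discrete on the relevant torsion image, this proves continuity. The only delicate step is the lifting assertion $\Het^1(\Oh_v,\mathcal{H})\subseteq\mathrm{Im}(\mathcal{X}(\Oh_v))$, which is where the semisimple simply connected hypothesis on $G$ enters essentially via Remark \ref{2.2} (Hensel plus Lang); everything else is formal.
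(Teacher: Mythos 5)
Your proposal is correct and follows essentially the same route as the paper: the factorization is Lemma \ref{factorviaH1} applied place by place, and the continuity argument rests on spreading out $A$ over $\Oh_{S_A}$, observing that $\ev_A$ vanishes on integral points, and using the vanishing of $\Het^1(\Oh_v,\mathcal G)$ (Remark \ref{2.2}) to see that $\mathcal X(\Oh_v)\to\Het^1(\Oh_v,\mathcal H)$ is surjective, so that $\ev_A$ kills the integral part of a basic open set. The paper phrases the conclusion as continuity on basic opens rather than local constancy, but the content is identical.
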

\begin{proof}
The first statement is a direct consequence of 
Lemma \ref{factorviaH1}. Now we show the continuity. Let $T\subseteq \Omega$ be a big enough finite set of places such that $A$ comes from $\br \mathcal X_T$ for an $\Oh_T$-model~$\mathcal X$ of $X$ and $H$ extends to $\mathcal H$ over $\Oh_T$. Enlarging $T$ if necessary, we can suppose moreover that~$\mathcal X(\Oh_v)\rightarrow \Ho^1(\Oh_v,\mathcal H)$ is surjective: we have the exact sequence of pointed sets (cf. \cite[Proposition III.3.2.2]{cohomologienonabelienne}) $$\mathcal X(\Oh_v)\rightarrow \Ho^1(\Oh_v,\mathcal H)\rightarrow \Ho^1(\Oh_v,\mathcal G)$$ and $\Ho^1(\Oh_v,\mathcal G)$ vanishes for almost all $v\in \Omega$ (see Remark \ref{2.2}). The sets of the form $$P\bigcap\left(\prod_{v\in T^\prime}U_v\times\prod_{v\notin T^\prime}\Ho^1(\Oh_v,\mathcal H)\right)$$
for $T^\prime\supseteq T$ and subset $U_v\subseteq \Ho^1(k_v,H)$ form a basis of open sets of $P$. Hence it suffices to prove the continuity of $\ev_A$ on each of these open subsets. Since $\ev_A:\mathcal X(\mathcal O_v)\rightarrow \QZ$ is zero and $X(\mathcal O_v)\rightarrow \Ho^1(\Oh_v,\mathcal H)$ is surjective, the evaluation map $\ev_A: \Ho^1(\Oh_v,\mathcal H)\rightarrow\QZ$ is zero for all $v\notin T^\prime$. The evaluation map $\ev_A: \Ho^1(k_v,H)\rightarrow \QZ$ is continuous since $\Ho^1(k_v,H)$ is discrete. Therefore, the evaluation map $\ev_A: P\rightarrow \QZ$ is continuous.
\end{proof}
Using this we derive the following theorem.

\begin{thm}\label{closed}
Let $X$ be a homogeneous space of a semisimple and simply connected algebraic group $G$ over a number field $k$. Then the set~$\pr_S(X(\A_k)^{\br})\subseteq X(\A_k^S)$ is closed. 
\begin{proof}
First of all, note that there is a commutative diagram:
\begin{equation*}
\begin{tikzcd}
X(\A_k) \arrow[r, "\delta"] \arrow[d, "\pr_S"] & P \arrow[d, "\pi_S"] \\
X(\A_k^S) \arrow[r, "\delta_S"]                & P^S.                 
\end{tikzcd}
\end{equation*}
We claim that $\pr_S(X(\A_k)^{\br})=\delta_S^{-1}(\pi_S(P^{\br}))$. Indeed, if $(x_v)_{v\notin S}\in X(\A_k^S)$ can be completed to a point $(x_v)_v$ that is orthogonal to $\br{X}$, we have that $\delta_S((x_v)_{v\notin S})=\pi_S(\delta((x_v)_v))$. On the other hand, if $\delta_S((x_v)_{v\notin S})\in P^S$ can be completed to a point in $(y_v)_v\in P$ that is orthogonal to~$\br{X}$, simply lift $(y_v)_{v\in S}$ and use the lifts to complete $(x_v)_{v\notin S}$ to an adelic point. This proves the claim.
\par Since $X(\A_k^S)\to P^S$ is continuous, it will suffice to show that $\pi_S(P^{\br})$ is closed in $P^S$. By Lemma \ref{pairingwithP}, the set $P^{\br}$ is closed in $P$ (in fact, we can show that $P^{\br}$ is discrete and closed in $P$, see Corollary \ref{multdiscrete}). Since $\Ho^1(k_v,H)$ is finite (\cite[III.\S4. Theorem 4]{serre1994cohomologie}), the map $\pi_S$ is a covering projection with finite fibers, and is thus closed. Therefore, $\pi_S(P^{\br})$ is closed, and so is $\pr_S(X(\A_k)^{\br})$.

\end{proof}
\end{thm}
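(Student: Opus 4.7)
The plan is to transfer the closedness question from $X(\A_k^S)$ to the much more tractable auxiliary space $P^S \subseteq \prod'_{v \notin S} \Ho^1(k_v, H)$, whose factors are discrete and finite. The key enabler is Lemma \ref{factorviaH1}: the Brauer--Manin pairing factors through the connecting map $\delta \colon X(\A_k) \to P$, giving access to the discrete topology on Galois cohomology.

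Concretely, I would first fit the data into a commutative square
\[
\begin{tikzcd}
X(\A_k) \arrow[r, "\delta"] \arrow[d, "\pr_S"'] & P \arrow[d, "\pi_S"] \\
X(\A_k^S) \arrow[r, "\delta_S"'] & P^S
\end{tikzcd}
\]
and establish the identity
\[
\pr_S(X(\A_k)^{\br}) = \delta_S^{-1}\bigl(\pi_S(P^{\br})\bigr),
\]
where $P^{\br}$ denotes the subset of elements of $P$ orthogonal to $\br X$ under the induced pairing. The inclusion $\subseteq$ is immediate from commutativity. For $\supseteq$, I would take $(x_v)_{v\notin S} \in X(\A_k^S)$ whose image in $P^S$ lifts to some $(y_v)_v \in P^{\br}$, lift each $y_v$ for $v \in S$ to an element $x_v \in X(k_v)$, and check that the completed adelic point $(x_v)_v$ lies in $X(\A_k)^{\br}$; the point is that by Lemma \ref{factorviaH1} the pairing only sees the image in $P$.

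Granted the identity, continuity of $\delta_S$ reduces the problem to showing $\pi_S(P^{\br})$ is closed in $P^S$. I would handle this in two steps. First, $P^{\br}$ is closed in $P$: by Lemma \ref{pairingwithP} each evaluation $\ev_A \colon P \to \QZ$ is continuous, so $P^{\br} = \bigcap_{A \in \br X} \ev_A^{-1}(0)$ is an intersection of closed sets. Second, $\pi_S$ is a closed map: its fibers sit inside the finite discrete set $\prod_{v \in S} \Ho^1(k_v, H)$ (finiteness being the local result of \cite[III.\S 4 Theorem 4]{serre1994cohomologie}), making $\pi_S$ a finite covering projection.

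The step I expect to be the main obstacle is verifying the nontrivial inclusion in the identity above: one must lift the $S$-components in such a way that Brauer-orthogonality on the full adelic space is preserved, and this hinges crucially on the fact that the pairing descends to $P$ in the sense of Lemma \ref{factorviaH1}. Everything else then falls out cleanly from the continuity of the evaluation maps on $P$ and the finiteness of $\Ho^1(k_v, H)$.
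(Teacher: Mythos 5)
Your proposal is correct and follows essentially the same route as the paper: the same commutative square, the same identity $\pr_S(X(\A_k)^{\br})=\delta_S^{-1}(\pi_S(P^{\br}))$ proved by the same lifting argument via Lemma \ref{factorviaH1}, and the same reduction to closedness of $P^{\br}$ in $P$ (Lemma \ref{pairingwithP}) plus closedness of $\pi_S$ from finiteness of $\Ho^1(k_v,H)$. No gaps.
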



\section{Homogeneous spaces with commutative stabilizers}

Let $X=G/H$ with $G$ a semisimple simply connected group over $k$ and $H$ commutative. Since~$k$ has characteristic $0$, we have a unique decomposition $H=M
\times \Ga^r$ of $H$ into a largest unipotent subgroup isomorphic to $\Ga^r$ and a subgroup $M$ of multiplicative type (see \cite[Theorem 16.13 and Corollary 14.33]{milne2017algebraic}). Let $\hat{M}:=\Hom(M_{\overline{k}},\G_{m,\overline{k}})$ be the (geometric) character group of $M$. This is a finitely generated discrete Galois module, which we can view as a commutative $k$-group scheme locally of finite type.
For each place $v$, we have the cup-product pairing 
\[H^1(k_v,M)\times H^1(k_v,\hat M)\rightarrow\br k_v\addtag\label{PTpairing}\]
which is a perfect pairing of finite groups by the Poitou-Tate duality (see \cite[Corollary 2.3 and Theorem 2.13 in Chapter I ]{milne2006arithmetic}). This induces a pairing 
\[\left({\prod}_{v\in\Omega}^{\prime}H^1(k_v,M)\right)\times H^1(k,\Hat M)\rightarrow\QZ,\addtag\label{PairingH1}\] 
and we have a commutative diagram of continuous homomorphisms with exact rows
\[
\begin{tikzcd}
{\Ho^1(k,M)} \arrow[r] \arrow[d, equal] & {{\prod}_{v\notin S}^\prime \Ho^1(k_v,M)} \arrow[r]                       &   {H^1_S(k,\hat M)^*}                                                                 \\
{\Ho^1(k,M)} \arrow[r]                                & {{\prod}_{v\in\Omega}^\prime \Ho^1(k_v,M)} \arrow[u, two heads] \arrow[r] & {\Ho^1(k,\hat M)^*}  \arrow[u, two heads] \\                                                      & {\prod_{v\in S}\Ho^1(k_v,M)} \arrow[u, hook] \arrow[r, "\simeq"]          & {(\prod_{v\in S}\Ho^1(k_v,\hat M))^*} \arrow[u, hook]                                   
\end{tikzcd}\addtag\label{Poitou-Tate}
\]
where $A^*\coloneq \Hom(A,\QZ)$ denotes the Pontryagin dual of $A$, and $\Ho_S^1(k,-)$ is
as defined in \S\ref{notation}. Indeed, the middle row is the Poitou-Tate exact sequence (see \cite[Théorème~6.3]{demarche2011suites}). The first row is induced by the second row and is exact by a diagram chasing argument.
\par We have $\Ho^1(F,H)\simeq \Ho^1(F,M)\times \Ho^1(F,\Ga^r)= \Ho^1(F,M)$ for any field $F$ because of the vanishing of $H^1(F,\Ga)$ (see \cite[Proposition 1, Chapter II]{serre1994cohomologie}). Hence we get a connecting map $X(F)\to \Ho^1(F,M)$ through the composition $X(F)\to\Ho^1(F,H)=\Ho^1(F,M)$. Using this map for $F=k_v$, we have that the pairing (\ref{PTpairing}) is compatible with the Brauer-Manin pairing, in the sense that the following diagram 
\begin{equation}\label{br1pairing}
\begin{tikzcd}
X(k_v) \arrow[d] \arrow[r,phantom,"\times" description] & \br X \arrow[r] \arrow[d,leftarrow]      & \br 
 k_v \arrow[d,Rightarrow, no head] \\
\Ho^1(k_v,M)                                \arrow[r,phantom,"\times" description] & {\Ho^1(k,\hat M)} \arrow[r]  & \br k_v     
\end{tikzcd}
\end{equation}
is commutative. There is a natural isomorphism $\Ho^1(k,\hat M)\simeq \ker(o^*\colon\br_1 X\to \br k)$ and hence $\Ho^1(k,\hat M)$ may be identified with $\br_1 X/\br k$ (see \cite[Propositions~2.7 and~2.12]{colliot2009brauer}).
\begin{thm}\label{projectioncommutativecase}
Let $X$ be a homogeneous space as defined above, that is, $X=G/H$ with $G$ a semisimple simply connected group over $k$ and $H$ commutative.  If $G$ satisfies strong approximation away from $S$, then $\overline{X(k)}^S=\pr_S(X(\A_k)^{\br})=\pr_S(X(\A_k)^{\br_1})$.
\end{thm}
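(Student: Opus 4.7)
The inclusions $\overline{X(k)}^S \subseteq \pr_S(X(\A_k)^{\br}) \subseteq \pr_S(X(\A_k)^{\br_1})$ are immediate: the first comes from reciprocity via the Albert--Brauer--Hasse--Noether sequence, the second from $\br_1 X \subseteq \br X$. The heart of the proof is therefore the reverse inclusion $\pr_S(X(\A_k)^{\br_1}) \subseteq \overline{X(k)}^S$. The plan is to start from an adelic point orthogonal to $\br_1 X$, push its image under $\delta$ down to a global Galois cohomology class via Poitou--Tate duality, promote this class to a rational point using that $G$ is semisimple and simply connected, and finally use strong approximation for $G$ to slide that rational point into any prescribed neighbourhood of the given adelic point.

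Concretely, fix $(x_v)_{v \notin S}$ and extend it to $(x_v)_v \in X(\A_k)^{\br_1}$, then set $\xi_v \coloneq \delta_v(x_v) \in \Ho^1(k_v, H) = \Ho^1(k_v, M)$. Using the compatibility square (\ref{br1pairing}) together with the identification $\br_1 X/\br k \cong \Ho^1(k, \hat M)$, orthogonality of $(x_v)_v$ to $\br_1 X$ translates into orthogonality of the tuple $(\xi_v)_v \in \prod'_v \Ho^1(k_v, M)$ to $\Ho^1(k, \hat M)$ under the pairing (\ref{PairingH1}). The exactness of the middle row of diagram (\ref{Poitou-Tate}), which is the global Poitou--Tate sequence, then furnishes a class $\xi \in \Ho^1(k, M)$ with $\xi|_{k_v} = \xi_v$ for every $v$. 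Since at each real place $v$ the class $\xi_v$ lies in the image of $\delta_v$, it is killed in $\Ho^1(k_v, G)$; by Theorem \ref{kneserharder} this forces $\xi$ itself to become trivial in $\Ho^1(k, G)$, and so there exists $x \in X(k)$ with $\delta(x) = \xi$.

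To conclude, observe that $x \in X(k) \subseteq X(\A_k^S)$ and $(x_v)_{v \notin S}$ both have image $(\xi_v)_{v \notin S}$ in $\prod'_{v\notin S}\Ho^1(k_v, H)$, so the orbit statement of Proposition \ref{pointedsets} supplies $(g_v)_{v \notin S} \in G(\A_k^S)$ with $x_v = g_v \cdot x$ for every $v \notin S$. Strong approximation for $G$ away from $S$ then provides $g \in G(k)$ arbitrarily close to $g_v$ for each $v \notin S$, and continuity of the action map $G \times X \to X$ at each $k_v$ shows that $g \cdot x \in X(k)$ approximates $(x_v)_{v \notin S}$ in $X(\A_k^S)$. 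I expect the main subtlety to be the compatibility step (\ref{br1pairing}) combined with the Poitou--Tate argument, which is what converts the Brauer--Manin obstruction on $X$ into a cohomological obstruction on $H$ that is then killed by the exact sequence; once this translation is performed, the already-established orbit statement in Proposition \ref{pointedsets} sidesteps any delicate Hensel-lifting required to upgrade the $g_v$ to an adelic element.
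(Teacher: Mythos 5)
Your main argument --- translating orthogonality to $\br_1 X$ into orthogonality of $(\delta_v(x_v))_v$ to $\Ho^1(k,\hat M)$ via the compatibility square (\ref{br1pairing}), descending to a global class $\xi\in\Ho^1(k,M)$ by exactness of the Poitou--Tate row in (\ref{Poitou-Tate}), killing its image in $\Ho^1(k,G)$ at the real places via Theorem \ref{kneserharder}, and finishing with the orbit statement of Proposition \ref{pointedsets} plus strong approximation for $G$ --- is exactly the paper's proof of the inclusion $\pr_S(X(\A_k)^{\br_1})\subseteq\overline{X(k)}^S$, and it is correct.

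The gap is in what you dismiss as ``immediate''. The Albert--Brauer--Hasse--Noether sequence gives only $X(k)\subseteq\pr_S(X(\A_k)^{\br})$; it does \emph{not} give $\overline{X(k)}^S\subseteq\pr_S(X(\A_k)^{\br})$, because $\pr_S(X(\A_k)^{\br})$ is the image under $\pr_S$ of a closed set and need not itself be closed in $X(\A_k^S)$ --- the paper recalls that closedness already fails for $X=\Gm$ with $S=\{2,\infty\}$. Without that inclusion, your chain $X(k)\subseteq\pr_S(X(\A_k)^{\br})\subseteq\pr_S(X(\A_k)^{\br_1})\subseteq\overline{X(k)}^S$ only shows that all three sets have the same closure, not that the two middle sets equal $\overline{X(k)}^S$. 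The missing ingredient is precisely Theorem \ref{closed} (closedness of $\pr_S(X(\A_k)^{\br})$ for homogeneous spaces of semisimple simply connected groups), which is one of the substantive results of the paper and which its own proof of this statement invokes in its final paragraph; once you cite it, your argument closes up. A very minor further point: in the last step you should justify that the orbit map $G(\A_k^S)\to X(\A_k^S)$, $g\mapsto g\cdot x$, is continuous for the restricted-product topologies (i.e.\ that it respects integral models at almost all places); continuity place by place is not quite enough.
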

\begin{proof}
We first prove that $\pr_S(X(\A_k)^{\br})\subseteq \overline{X(k)}^S$. For this we use the same argument as in \cite[\S4]{colliot2009brauer} where they proved the case for finite commutative stabilizers $H$. 
The diagram (\ref{br1pairing}) and the exact sequences in (\ref{Poitou-Tate}) and Proposition \ref{pointedsets} give the commutative diagram: 
\[\begin{tikzcd}
    & G(k) \arrow[d] \arrow[r] & G(\A_k) \arrow[d]\\
    & X(k) \arrow[d] \arrow[r] & X(\A_k) \arrow[d, "\delta"] \arrow[r] & (\br_1 X/\br k)^*\arrow[d]\\
    & \Ho^1(k,M)\arrow[d] \arrow[r] & \prod'_{v\in \Omega_k} \Ho^1(k_v, M) \arrow[d] \arrow[r] & (H^1(k,\hat M))^*\\
    & \Ho^1(k,G) \arrow[r] & {\prod'_{v\in \Omega_k}}\Ho^1(k_v, G).
\end{tikzcd}\]
This shows that the image of a point $(x_v)_v\in X(\A_k)^{\br_1}$ under $\delta$ lifts to a point $y\in \Ho^1(k,M)$, such that $y$ vanishes in $\Ho^1(k,G)$. Therefore $y$ lifts to $x\in X(k)$ which differs from $(x_v)_v$ by an element of $G(\A_k)$, that is, $(x_v)_v=x\cdot (g_v)_v$ for some $(g_v)_v\in G(\A_k)$. The $k$-point $x$ induces a morphism $\varphi_x:G\to X$ defined by $g\mapsto g\cdot x$. Now take any open $V\subset X(\A_k^S)$ which contains $(x_v)_{v\notin S}$. Since $X(k)\subset \pr_S(X(\A_k)^{\br_1})$, it suffices to show that $X(k)\cap V$ is nonempty in order to conclude that $X(k)$ is dense in $\pr_S(X(\A_k)^{\br_1})$. We consider $\varphi^{-1}(V)\subset G(\A_k^S)$. This is a nonempty open set since $(g_v)_{v\notin S}$ lies in there. Since $G$ has strong approximation away from~$S$, there is $g\in \varphi^{-1}(V)\cap G(k)$ and so we obtain $g\cdot x\in V$, showing that $X(k)$ is dense in $\pr_S(X(\A_k)^{\br_1})$, that is, $\pr_S(X(\A_k)^{\br_1})\subseteq \overline{X(k)}^S$.
\par Then, by Theorem \ref{closed} and the natural inclusions $X(k)\subseteq \pr_S(X(\A_k)^{\br})\subseteq \pr_S(X(\A_k)^{\br_1})$,  we conclude that the equality $\overline{X(k)}^S=\pr_S(X(\A_k)^{\br_1})=\pr_S(X(\A_k)^{\br})$ holds.
\end{proof}


\subsection{Brauer-Manin pairing away from $S$ with $\br'_S X$}
To remedy the potential non-closedness of $\pr_S(X(\A_k)^{\br})$ for a general variety $X$, one can also define an \textit{a priori} closed subset in $X(\A_k^S)$ containing $X(k)$ using the Brauer-Manin obstruction, and hope to relate such a closed subset to the closure of $X(k)$ in $X(\A_k^S )$. For example, in \cite{harari2010brauer} and \cite{demeio2022etale}, the authors considered the subgroup $\br_S X=\ker(\br X\rightarrow \prod_{v\in S}\br X_{k_v})$ of elements which are trivial at places in $S$, and $\br_S X$ cuts out a closed subset $X(\A_k^S)^{\br_S}$ in $X(\A_k^S)$. We give examples in this section showing that $\overline{X(k)}^S=X(\A_k^S)^{\br_S}$ does not hold in general, when $S$ contains real places for homogeneous spaces $X$. We are thus motivated to use finer information of the Brauer-Manin obstruction, and we propose considering the subset $\br_S^{\prime} X$ (see \S\ref{notation}) of elements in $\br X$ which induce zero evaluation maps at places in $S$. 
\par Formally, consider the natural pairing
\begin{align*}
X(\A_k^S)\times \br^\prime_S X&\rightarrow \QZ \\
(\{x_v\},A)&\mapsto \sum_{v\in\Omega\backslash S}\inv_v(\ev_A (x_v)).
\end{align*}
 For a subset $B\subseteq \br_S^\prime X$, let $X(\A_k^S)^B$ be the set of $S$-adelic points which are orthogonal to $B$. It is easy to see that $X(\A_k^S)^B=\pr_S(X(\A_k)^B)$. The set $X(\A_k^S)^B$ is closed in $X(\A_k^S)$ by continuity of the pairing, and contains $X(k)$ by the Albert-Brauer-Hasse-Noether exact sequence. Therefore, when $X(\A_k^S)^B\neq X(\A_k^S)$, there is an obstruction to strong approximation away from $S$.
 \par When we take $B$ to be the whole of $\br_S^ \prime X$, we denote by $X(\A_k^S)^ {\br^ \prime_S}$ the corresponding Brauer-Manin set $X(\A_k^S)^B$. Similarly, we can consider $B$ to be $\br_S X$, $\br_{1,S}^\prime X$ or $\br_{1,S} X$. Clearly, we have chains of inclusions \begin{equation}
 \label{chainofinclusions}\begin{split}\overline{X(k)}^S\subseteq X(\A_k^S)^ {\br^ \prime_S}&\subseteq X(\A_k^S)^ {\br_S} \subseteq X(\A_k^S)^ {\br_{1,S}} \\
 \overline{X(k)}^S\subseteq X(\A_k^S)^ {\br^ \prime_S}&\subseteq X(\A_k^S)^ {\br^ \prime_{1,S}}.\end{split}\end{equation} In this section, we investigate conditions for when these inclusions become equalities for homogeneous spaces $X=G/H$ under semisimple simply connected groups $G$ with commutative stabilizers $H$, and give examples showing that these inclusions can be strict in general. As in the previous section, we use the decomposition $H\simeq M\times \Ga^r$ with $M$ of multiplicative type, and we identify $\Ho^1(F,H)$ with $\Ho^1(F,M)$ for any field $F$.
\begin{rmk}\label{moduloconstants}It is common practice to consider a Brauer-Manin pairing with the quotient group $\br X/\br k$ since constant elements do not contribute to the Brauer-Manin obstruction. For example, see \cite[\S7]{bright2015bad} where the local evaluation map associated to $\overline\alpha\in\br X/\br k$ is defined by fixing a ``base point'' $Q_v\in X(k_v)$: 
$$\overline\alpha\colon X(k_v)\rightarrow \br k_v ,\quad P_v\mapsto \alpha(P_v)-\alpha(Q_v)$$
for any representative $\alpha\in\br X $ of $\overline\alpha\in\br X /\br k$. With this point of view, we can define a Brauer-Manin pairing away from $S$ with $\br_{S}^\prime X/(\br k \cap \br_{S}^\prime X)=\br_{S}^\prime X/\br_S k.$ The natural map $\br_{S}^\prime X/\br_S k\to \{\beta\in \br X\,|\,\ev_{\beta}\text{ is constant at all }v\in S\}/\br k$ is an isomorphism because of the surjectivity of $\br k\rightarrow \prod_{v\in S}\br k_v$. Hence we may think of an element in $\br'_SX/\br_Sk$ as the class (modulo $\br k$) of an element of $\br X$ inducing constant evaluation maps at $S$. Similarly, there is an isomorphism $\br_S X/\br_S k\simeq \ker (\br X\rightarrow \prod_{v\in S}\br X_{k_v}/ \br k_v)/\br k$, the classes of elements which become constant at $S$, and this is the pairing considered by Harari and Voloch in \cite[\S2]{harari2010brauer}.
\end{rmk}
 \begin{lem}\label{discrete}
Let $\mu$ be a commutative étale group scheme over $k$ and let $S$ be a finite set of places of $k$. The map $r\colon\Ho^1(k,\mu)\to \prod'_{v\notin S}\Ho^1(k_v,\mu)$ has discrete image.
\begin{proof}
Let $y\in \Ho^1(k,\mu)$ and write $z\coloneq r(y)$ and $I\coloneq r(\Ho^1(k,\mu))$. Since the space $\prod'_{v\notin S}\Ho^1(k_v,\mu)$ is Hausdorff, it suffices to show that $z$ has a neighbourhood $U$ in $\prod'_{v\notin S}\Ho^1(k_v,\mu)$ that intersects $I$ in finitely many points. There exists a set $T$, disjoint from $S$, such that $T$ contains all infinite places not in $S$, and such that for all $v\notin T\cup S$ we have $r_v(y)\in \Ho^1(\Oh_v,\mu)$. We enlarge $T$ such that $\#\mu(\overline{k})$ is invertible in $\Oh_{T\cup S}$ and we set $R:=T\cup S$. Consider the open $U\coloneq \prod_{v\in T}\Ho^1(k_v,\mu)\times \prod_{v\notin R}\Ho^1(\Oh_v,\mu)$ of $z$.
For all nonarchimedean $v$ that do not divide $\#\mu(\overline{k})$, there is the Gysin sequence: \begin{equation*}0\to \Ho^1(\Oh_v,\mu)\to \Ho^1(k_v,\mu)\overset{\res}{\to} \Ho^0(\mathbb{F}_v,\mu(-1))\to ...\end{equation*}
So an element $x$ of $\Ho^1(k,\mu)$ maps into $U$ if and only if $\res(r_v(x))=0$ for all $v\notin R$. 
For finite sets $R\subseteq R'$ of places of $k$, containing all archimedean ones, there is an exact sequence:
\begin{equation*}0\to \Ho^1(\Oh_{R},\mu)\to \Ho^1(\Oh_{R'},\mu)
\xrightarrow[]{\res}\bigoplus_{v\in R'\setminus R}\Ho^0(\mathbb{F}_v,\mu(-1))\to ...\end{equation*}
Taking the colimit over all finite sets of places $R'$ containing $R$ gives the exact sequence:
\begin{equation*}0\to \Ho^1(\Oh_R,\mu)\to \Ho^1(k,\mu)\to \bigoplus_{v\in \Omega\setminus R}\Ho^0(\mathbb{F}_v,\mu(-1))\to ...\end{equation*}
Here the middle term is indeed $\Ho^1(k,\mathcal{F})=\Ho^1(\varprojlim_{R\subseteq R'}\Oh_{R'},\mathcal{F})=\varinjlim_{R\subseteq R'}\Ho^1(\Oh_{R'},\mathcal{F})$ where the second equality is from \cite[VII Théorème 5.7]{SGA4}. Hence we note that the elements $x\in \Ho^1(k,\mu)$ having residue zero at all $v\notin T\cup S$ can be identified with $\Ho^1(\Oh_{T\cup S},\mu)$. \par
We claim $\Ho^1(\Oh_R,\mu)$ is finite for any finite $R$ containing the archimedean places. After passing to a finite extension $L$ of $k$ we may assume that $\mu|_L$ is constant and isomorphic to a product of $\mu_n(L)$ (so we also adjoined roots of unity to $k$). Let $T$ be the set of places of $L$ extending $R$. The Kummer sequence for $\Oh_T$ reads: \[0\to\Oh_{T}^\times/(\Oh_{T}^{\times})^n\to \Het^1(\Oh_T,\mu_n)\to \Pic(\Oh_T)[n]\to 0.\] Combining Dirichlet's unit theorem with finiteness of the class group gives that $\Ho^1(\Oh_T,\mu_n)$ is finite and so $\Ho^1(\Oh_T,\mu)$ is finite. Now we want to descend the finiteness of this group to $\Oh_R$.
We have the exact sequence in group cohomology: 
\begin{equation*}0\to \Ho^1(\Gal(L/k),\mu(\overline{k}))\to \Ho^1(\Gamma_k,\mu(\overline{k}))\to \Ho^1(\Gamma_L,\mu(\overline{k}))^{\Gal(L/k)}\to...\end{equation*}
Note that $\ker(\Het^1(\Oh_R,\mu)\to \Het^1(\Oh_T,\mu))\subset \Ho^1(\Gal(L/k),\mu(\overline{k}))$ and the rightmost set is finite which follows from the cocycle description.
This implies that there is a surjection from the finite set $\Ho^1(\Oh_{R},\mu)$ to $I\cap U$ where $I$ is as in the start of the proof. This shows that any $z\in I$ has a finite open neighbourhood, which as discussed at the beginning completes the proof.
\end{proof}
\end{lem}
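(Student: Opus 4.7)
The map $r$ is a homomorphism of abelian groups into a topological group, so showing the image is discrete reduces to exhibiting a single open neighbourhood of $0$ in $\prod'_{v\notin S}\Ho^1(k_v,\mu)$ that intersects $r(\Ho^1(k,\mu))$ in a finite set: the finite set can then be shrunk away using the Hausdorffness of the restricted product (every factor is discrete), so that $\{0\}$ becomes open in the image, forcing discreteness everywhere by translation.

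The plan is to choose a finite set $T$ of places, disjoint from $S$ and containing all archimedean places outside $S$, large enough that $\mu$ extends to a finite étale group scheme $\mathcal{M}$ over $\mathcal{O}_{T\cup S}$ and $\#\mu(\overline{k})$ is invertible on $\mathcal{O}_{T\cup S}$. Setting $R := T \cup S$, the basic open neighbourhood $U := \prod_{v\in T}\{0\}\times \prod_{v\notin R}\Het^1(\mathcal{O}_v,\mathcal{M})$ is then a neighbourhood of $0$ in the restricted product, and the classes in $\Ho^1(k,\mu)$ landing in $U$ are exactly those that are trivial at every $v\in T$ and unramified at every $v\notin R$. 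Using the residue (Gysin) sequence $0 \to \Het^1(\mathcal{O}_v,\mathcal{M}) \to \Ho^1(k_v,\mu) \to \Ho^0(\mathbb{F}_v,\mu(-1))$ at each prime $v\notin R$, together with a colimit argument over enlarging sets of places that identifies $\Ho^1(k,\mu)$ with $\varinjlim_{R'} \Het^1(\mathcal{O}_{R'},\mathcal{M})$, these unramified-outside-$R$ classes correspond to a subgroup of $\Het^1(\mathcal{O}_R,\mathcal{M})$.

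It then suffices to show that $\Het^1(\mathcal{O}_R,\mathcal{M})$ itself is finite. I would pass to a finite Galois extension $L/k$ over which $\mathcal{M}$ becomes a constant étale sheaf, hence a product of copies of $\mu_n$ with $n$ invertible on $\mathcal{O}_{R_L}$, where $R_L$ is the set of places of $L$ above $R$. The Kummer sequence on $\mathcal{O}_{R_L}$ places $\Het^1(\mathcal{O}_{R_L},\mu_n)$ in an exact sequence with $\mathcal{O}_{R_L}^\times/(\mathcal{O}_{R_L}^\times)^n$ on the left and $\Pic(\mathcal{O}_{R_L})[n]$ on the right, both finite by Dirichlet's unit theorem and finiteness of the ideal class group. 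An inflation-restriction argument, using finiteness of $\Ho^1(\Gal(L/k),\mu(\overline{k}))$ (a finite group with finite coefficients), descends finiteness to $\Het^1(\mathcal{O}_R,\mathcal{M})$.

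The main obstacle I anticipate is the colimit and residue bookkeeping that identifies classes ``unramified outside $R$'' with $\Het^1(\mathcal{O}_R,\mathcal{M})$, which needs a careful choice of $T$ (in particular incorporating the primes at which the original $y$ fails to be integral) and an appeal to the behaviour of étale cohomology under inverse limits of rings of $S$-integers. Once this identification is in place, the finiteness step, while combining several classical inputs (Kummer theory, Dirichlet, class group finiteness, Hochschild-Serre), is routine.
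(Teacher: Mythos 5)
Your proposal is correct and follows essentially the same route as the paper: the same choice of the auxiliary set $T$ and basic open set, the same Gysin-sequence and colimit identification of the classes unramified outside $R$ with $\Het^1(\Oh_R,\mu)$, and the same finiteness argument via the Kummer sequence, Dirichlet's unit theorem, finiteness of the class group, and inflation--restriction. The only (harmless) difference is that you reduce to a neighbourhood of $0$ and conclude by translation in the topological group, whereas the paper works directly around an arbitrary point $z=r(y)$ of the image.
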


As a consequence of this lemma, we can generalize this statement to groups of multiplicative type.

\begin{cor}\label{multdiscrete}
For a group $M$ of multiplicative type, the image of $\Ho^1(k,M)$ inside ${\prod}^\prime_{v\notin S}\Ho^1(k_v,M)$ is discrete.
\end{cor}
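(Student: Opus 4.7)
My plan is to mimic the proof of Lemma~\ref{discrete} using the structure of groups of multiplicative type. In characteristic zero, any such $M$ fits into an exact sequence $1 \to T \to M \to F \to 1$ with $T$ a torus and $F$ finite étale (from the torsion/torsion-free decomposition of the character module $\hat M$). I pick a finite set of places $R$ containing all archimedean ones and disjoint from $S$, and a group scheme of multiplicative type $\mathcal{M}$ extending $M$ over $\Oh_R$ fitting in an exact sequence $1 \to \mathcal{T} \to \mathcal{M} \to \mathcal{F} \to 1$. Given $y \in \Ho^1(k, M)$, after enlarging $R$ so that $r_v(y) \in \Ho^1(\Oh_v, \mathcal{M})$ for all $v \notin R \cup S$, consider the basic open neighbourhood
\[U = \prod_{v \in R \setminus S} \Ho^1(k_v, M) \times \prod_{v \notin R \cup S} \Ho^1(\Oh_v, \mathcal{M})\]
of $r(y)$ in ${\prod}^\prime_{v \notin S} \Ho^1(k_v, M)$. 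The preimage of $U$ under $r\colon \Ho^1(k, M) \to {\prod}^\prime_{v \notin S}\Ho^1(k_v, M)$ is the Selmer-type set
\[\mathrm{Sel}_R(M) := \{x \in \Ho^1(k, M) : r_v(x) \in \Ho^1(\Oh_v, \mathcal{M}) \text{ for all } v \notin R \cup S\},\]
so discreteness (via group homogeneity and a further shrinking at finitely many places) reduces to the finiteness of $\mathrm{Sel}_R(M)$.

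I would prove finiteness of $\mathrm{Sel}_R(M)$ via two steps. First, as in Lemma~\ref{discrete}, I would identify $\mathrm{Sel}_R(M)$ with the image of the natural map $\Ho^1(\Oh_{R\cup S}, \mathcal{M}) \to \Ho^1(k, M)$, using a localization exact sequence in étale cohomology on $\Spec \Oh_{R\cup S}$ in place of the Gysin sequence, together with the colimit identity $\Ho^1(k, M) = \varinjlim_{R'} \Ho^1(\Oh_{R'}, \mathcal{M})$. Second, I would verify that $\Ho^1(\Oh_{R\cup S}, \mathcal{M})$ itself is finite: choose a finite Galois extension $L/k$ splitting $M$ and enlarge $R$ so that $\Spec\Oh_{R\cup S}^L \to \Spec\Oh_{R\cup S}$ is étale Galois; then $\mathcal{M}_{\Oh_{R\cup S}^L}$ decomposes as a product of copies of $\Gm$ and $\mu_{n_i}$'s, and the Hochschild--Serre spectral sequence reduces the finiteness to that of $\Pic(\Oh_{R\cup S}^L)$, of the $\Ho^1(\Oh_{R\cup S}^L, \mu_{n_i})$'s, and of the Galois cohomology groups $\Ho^i(\Gal(L/k), \mathcal{M}(\Oh_{R\cup S}^L))$ with values in the finitely generated module $\mathcal{M}(\Oh_{R\cup S}^L)$. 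All three are finite, by the finiteness of class groups, Dirichlet's unit theorem, and the argument of Lemma~\ref{discrete} applied at the level of $L$.

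The hard part will be the first step, since the clean Gysin formulation used in Lemma~\ref{discrete} is unavailable verbatim when $\mathcal{M}$ has torus components. I expect to resolve this by dévissage along $1 \to \mathcal{T} \to \mathcal{M} \to \mathcal{F} \to 1$: Lemma~\ref{discrete} applies directly to the finite étale $\mathcal{F}$, while for the torus factor $\mathcal{T}$ Lang's theorem gives $\Ho^1(\Oh_v, \mathcal{T}) = 0$ at every $v \notin R$, so $\mathrm{Sel}_R(T)$ collapses to the Tate--Shafarevich-type group $\Sha^1_{R \cup S}(k, T) := \ker(\Ho^1(k, T) \to \prod_{v \notin R\cup S}\Ho^1(k_v, T))$, classically known to be finite for tori over number fields. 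A diagram chase along the cohomology sequences of $1 \to \mathcal{T} \to \mathcal{M} \to \mathcal{F} \to 1$ then pieces together the two finite-image statements into the required finiteness of $\mathrm{Sel}_R(M)$, completing the proof.
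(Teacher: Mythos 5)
Your argument is correct in outline, but it takes a genuinely different route from the paper's, and one step of your d\'evissage needs to be made explicit. Both proofs use the same exact sequence $1\to M^\circ\to M\to \pi_0(M)\to 1$ (your $1\to T\to M\to F\to 1$) and both feed the finite \'etale quotient into Lemma~\ref{discrete}. The difference is in how the torus part is handled. The paper stays purely topological: writing $s$ and $t$ for the maps from $\Ho^1(k,-)$ to the restricted products for $M$ and $\mu=\pi_0(M)$, the induced map $\mathrm{Im}(s)\to\mathrm{Im}(t)$ has discrete target by Lemma~\ref{discrete}, and its fibres are cosets of a subgroup of the image of $\bigoplus_{v\notin S}\Ho^1(k_v,M^\circ)$, which is discrete because $\Ho^1(\Oh_v,\mathcal{T})=0$ for almost all $v$ (Lang plus Hensel); hence every point of $\mathrm{Im}(s)$ is open, and no global finiteness theorem about tori is needed. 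You instead prove the stronger local statement that the image meets each basic open set $U$ in a finite set, by showing that the Selmer group $\mathrm{Sel}_R(M)=r^{-1}(U)$ is finite; for this you must import the finiteness of $\Sha^1_{R\cup S}(k,T)\subseteq \Sha^1_\omega(k,T)$ for tori (Sansuc). So your approach buys a stronger conclusion (local finiteness rather than mere discreteness) at the price of a heavier arithmetic input, while the paper's buys economy.

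The step to make explicit: in your final diagram chase, the kernel of $\mathrm{Sel}_R(M)\to \mathrm{Sel}_R(F)$ consists a priori of classes $\iota(t)$ for which $r_v(t)$ only lies in $\delta_v(F(k_v))\subseteq \Ho^1(k_v,T)$ for $v\notin R\cup S$, not of classes with $r_v(t)=0$; so it is not obviously $\iota\bigl(\Sha^1_{R\cup S}(k,T)\bigr)$. This is repaired precisely by the integral models: for good $v$ one has $F(k_v)=\mathcal{F}(\Oh_v)$ and the local connecting map factors through $\Ho^1(\Oh_v,\mathcal{T})=0$, so $\delta_v(F(k_v))=0$; likewise, $r_v(x)\in \Ho^1(\Oh_v,\mathcal{M})$ with trivial image in $\Ho^1(\Oh_v,\mathcal{F})$ forces $r_v(x)=0$ by exactness of the integral sequence. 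With these two observations the chase closes and $\mathrm{Sel}_R(M)$ is finite. Note that your second paragraph's identification of $\mathrm{Sel}_R(M)$ with the image of $\Ho^1(\Oh_{R\cup S},\mathcal{M})$ then becomes unnecessary, which is just as well since, as you yourself observe, the Gysin argument is not available verbatim for the torus part.
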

\begin{proof}
Let $M^\circ$ be the neutral connected component of $M$ and let $\mu\coloneq \pi_0(M)$ be the étale group scheme of connected components. There is a commutative diagram where the rightmost column is an exact sequence of topological groups:
\begin{equation*}
\begin{tikzcd}
{\Ho^1(k,M^\circ)} \arrow[d] \arrow[r, "r"]          & {\bigoplus_{v\notin S}\Ho^1(k_v,M^\circ)} \arrow[d, "\alpha"] \\
{\Ho^1(k,M)} \arrow[r, "s"] \arrow[d] & {{\prod}^\prime_{v\notin S}\Ho^1(k_v,M)} \arrow[d, "\beta"]  \\
{\Ho^1(k,\mu)} \arrow[r, "t"]                           & {{\prod}^\prime_{v\notin S}\Ho^1(k_v,\mu)}.                  
\end{tikzcd}
\end{equation*}
We have to show that $\mathrm{Im}(s)$ is discrete. Let $\beta'\coloneq \beta|_{\mathrm{Im}(s)}\colon\mathrm{Im}(s)\to \mathrm{Im}(t)$. Pick any point $s(x)\in \mathrm{Im}(s)$ and note that $\{\beta'(s(x))\}$ is open in $\mathrm{Im}(t)$ by the previous lemma. Therefore $\beta'^{-1}(\beta(s(x)))$ is open in $\mathrm{Im}(s)$. Since we have $\ker(\beta')\subseteq \alpha(\bigoplus_{v\notin S}\Ho^1(k_v,M^\circ ))$, the kernel carries the discrete topology. So $\beta'^{-1}(\beta(s(x)))=s(x)+\ker(\beta')$ is discrete and we conclude that: $$\{s(x)\}\underset{\mathrm{open}}{\subset}\beta'^{-1}(\beta(s(x)))\underset{\mathrm{open}}{\subset}\mathrm{Im}(s).$$
\end{proof}

This gives the following immediate corollary.

\begin{cor}
The composed map $X(k)\to \Ho^1(k,M)\to \prod'_{v\notin S}\Ho^1(k_v,M)$ is locally constant for the subspace topology on $X(k)$ induced from $X(k)\subseteq X(\A_k^S)$.
\begin{proof}
The map is continuous by Proposition \ref{pointedsets}. By Corollary \ref{multdiscrete}, the image is discrete and thus the map is locally constant.
\end{proof}
\end{cor}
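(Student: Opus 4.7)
The plan is to factor the composition through the adelic map and combine continuity with discreteness of the image. More precisely, consider the diagram
\[
\begin{tikzcd}
X(k) \arrow[r, hook] \arrow[d] & X(\A_k^S) \arrow[d] \\
\Ho^1(k,M) \arrow[r, "r"] & {\prod}'_{v\notin S}\Ho^1(k_v,M),
\end{tikzcd}
\]
where the right vertical map is the composition $X(\A_k^S)\to {\prod}'_{v\notin S}\Ho^1(k_v,H) = {\prod}'_{v\notin S}\Ho^1(k_v,M)$ coming from the connecting map in Proposition \ref{pointedsets} (recall that $\Ho^1(F,H)=\Ho^1(F,M)$ for any field $F$ because of the vanishing of $\Ho^1(F,\Ga^r)$). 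The top horizontal arrow is a topological embedding by definition of the subspace topology.

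First I would note that the right-hand vertical arrow is continuous by Proposition \ref{pointedsets}, so the composition along the top and right (equivalently, along the left and bottom) is continuous. Therefore the map $X(k)\to {\prod}'_{v\notin S}\Ho^1(k_v,M)$ we care about is continuous for the subspace topology on $X(k)$.

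Next, its image is contained in $r(\Ho^1(k,M))$, which is discrete in ${\prod}'_{v\notin S}\Ho^1(k_v,M)$ by Corollary \ref{multdiscrete}. A continuous map into a space whose image carries the discrete topology is locally constant: for any $x\in X(k)$ with image $z$, the singleton $\{z\}$ is open in the image, and its preimage is an open neighbourhood of $x$ in $X(k)$ on which the map takes the constant value $z$. This finishes the argument; there is no substantive obstacle here, as both ingredients are already in place.
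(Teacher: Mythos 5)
Your argument is correct and is essentially the paper's own proof: continuity of the composite via Proposition \ref{pointedsets}, discreteness of the image via Corollary \ref{multdiscrete}, and the standard fact that a continuous map with discrete image is locally constant. The commutative diagram and the explicit preimage-of-a-singleton step merely spell out details the paper leaves implicit.
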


From this, we deduce the following useful lemma.

\begin{lem}\label{S-closure}
Suppose $G$ satisfies strong approximation away from $S$. Fix $(x_v)_{v\notin S}\in X(\A_k^S)$ with image $(y_v)_{v\notin S}\in {\prod}^\prime_{v\notin S}\Ho^1(k_v,M)$. The point $(x_v)_{v\notin S}$ lies in lies in $\overline{X(k)}^S$ if and only if $(y_v)_{v\notin S}=(r_v(y))_{v\notin S}$ for some $y\in \Ho^1(k,M)$ such that $r_v(y)$ maps to zero in $\Ho^1(k_v,G)$ for every $v\in S_\R$.
\begin{proof}
If a sequence $(x_n)_n\in X(k)^\mathbb{N}$ converges to $(x_v)_{v\notin S}$, the previous corollary tells us that for a sufficiently large $n$ we have that $\delta(x_n)\in \Ho^1(k,M)$ maps to $(y_v)_{v\notin S}\coloneq (\delta_v(x_v))_{v\notin S}$. For all $v\in S_\R$ we have $r_v(y)=r_v(\delta(x_n))$, which vanishes by Proposition \ref{pointedsets}.
\par Conversely suppose that $y\in \Ho^1(k,M)$ maps to $(y_v)_{v\notin S}$, such that $r_v(y)=0$ for all $v\in S_\R$. Since for all $v\in \Omega_\R\setminus S_\R$ there is the point $x_v\in X(k_v)$ mapping to $r_v(y)$, we obtain that $r_v(y)=0$ for all $v\in \Omega_\R$. By applying Theorem \ref{kneserharder} we obtain that the image of $y$ in $\Ho^1(k,G)$ is trivial and so $y=\delta(x)$ for some $x\in X(k)$. Since $x$ and $(x_v)_{v\notin S}$ have the same image in ${\prod}^\prime_{v\notin S}\Ho^1(k_v,M)$, there is by Proposition \ref{pointedsets} some $(g_v)_{v\notin S}$ such that $(x_v)_{v\notin S}=(g_v\cdot x)_{v\notin S}$. Since $G$ has strong approximation away from $S$, $(g_v)_{v\notin S}$ can be approximated by a sequence in $G(k)$, which gives the approximation of $(x_v)_{v\notin S}$ by a sequence in $X(k)$.
\end{proof}
\end{lem}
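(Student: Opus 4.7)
The plan is to handle the two implications separately, using the discreteness result just proved (Corollary~\ref{multdiscrete} and its consequence that $X(k) \to {\prod}'_{v \notin S} \Ho^1(k_v, M)$ is locally constant) for the forward direction, and the Kneser--Harder theorem (Theorem~\ref{kneserharder}) together with the orbit description (Proposition~\ref{pointedsets}) and strong approximation for the reverse direction.

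For the forward direction, suppose $(x_v)_{v \notin S}$ is a limit of a sequence $(x_n) \in X(k)^{\mathbb N}$. I would first observe that the composition $X(k) \to \Ho^1(k,M) \to {\prod}'_{v \notin S} \Ho^1(k_v, M)$ is locally constant on the subspace topology inherited from $X(\A_k^S)$, so for all sufficiently large $n$ the element $y \coloneq \delta(x_n) \in \Ho^1(k, M)$ has the same image $(y_v)_{v \notin S}$ as $(x_v)_{v \notin S}$. Then, for $v \in S_\R$, since $x_n \in X(k)$ lifts $r_v(y)$ through $X(k_v) \to \Ho^1(k_v, M)$, exactness in Proposition~\ref{pointedsets} forces $r_v(y)$ to vanish in $\Ho^1(k_v, G)$.

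For the reverse direction, given such a $y \in \Ho^1(k,M)$, the key step is to upgrade the vanishing at $S_\R$ to vanishing at all of $\Omega_\R$ in $\Ho^1(-, G)$. For $v \in \Omega_\R \setminus S_\R$ we already have the point $x_v \in X(k_v)$ with $\delta_v(x_v) = r_v(y)$, so exactness again kills the image of $r_v(y)$ in $\Ho^1(k_v, G)$. Then Theorem~\ref{kneserharder} tells us that $y \mapsto 0$ already in $\Ho^1(k, G)$, so that $y = \delta(x)$ for some $x \in X(k)$. Since $x$ and $(x_v)_{v \notin S}$ share the same image in ${\prod}'_{v \notin S}\Ho^1(k_v, M)$, the orbit part of Proposition~\ref{pointedsets} provides $(g_v)_{v \notin S} \in G(\A_k^S)$ with $(x_v)_{v \notin S} = (g_v \cdot x)_{v \notin S}$.

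Finally, strong approximation for $G$ away from $S$ allows me to approximate $(g_v)_{v \notin S}$ by a sequence in $G(k)$, which translates to an approximation of $(x_v)_{v \notin S}$ by the corresponding sequence in the $G(k)$-orbit of $x \in X(k)$. The main technical step I expect to be the most delicate is ensuring that the twist-and-approximate argument respects continuity: specifically that the map $G(\A_k^S) \to X(\A_k^S)$, $g \mapsto g \cdot x$ is continuous so that strong approximation in $G$ transfers to strong approximation of $(x_v)_{v \notin S}$. This is however formal given the models-over-$\mathcal O_T$ setup of Section~\ref{notation}. The rest is a clean combination of previously established pieces.
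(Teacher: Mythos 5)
Your proposal is correct and follows essentially the same route as the paper's proof: local constancy of $X(k)\to{\prod}'_{v\notin S}\Ho^1(k_v,M)$ plus exactness at the real places of $S$ for the forward direction, and upgrading the vanishing in $\Ho^1(k_v,G)$ from $S_\R$ to all of $\Omega_\R$, invoking Theorem~\ref{kneserharder}, the orbit statement of Proposition~\ref{pointedsets}, and strong approximation for the converse. Your extra remark on the continuity of $g\mapsto g\cdot x$ is a point the paper leaves implicit here (it is spelled out earlier in the proof of Theorem~\ref{projectioncommutativecase}), but it does not change the argument.
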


\par 
We recall the notation $\Ho_S^1(k,-)$ 
as defined in \S\ref{notation}. 

\begin{lem} \label{surjective}The natural map $\Ho^1_{S\setminus S_\R}(k,\hat{M})\to \prod_{v\in S_\R}\Ho^1(k_v,\hat{M})$ is surjective.
\begin{proof}The result of the special case $S=S_\R$ and $M$ is finite and commutative can be found in \cite[Lemmes 1.4 and 1.6]{Sansuc1981}. We now give a proof for the general case. The identification $\Ho^1(k_v,\hat{M})=\Ho^1(k_v,M)^*$ by $a\mapsto (f_a:x\mapsto x\cup a)$ induces a map $\prod^\prime \Ho^1(k_v,\hat{M})\to \Ho^1(k,M)^*$. Indeed this is just induced by the restriction map $\Ho^1(k,M)\to \prod'_v\Ho^1(k_v,M)$ and then taking duals on both sides. Since taking the dual is exact, the map $\Ho^1(k,M)\to \prod'_v\Ho^1(k_v,M)$ induces a surjection $\prod'_v\Ho^1(k_v,\hat{M})\twoheadrightarrow (H^1(k,M)/\Sha^1(k,M))^*$. It is an easy check to see that $\prod^\prime \Ho^1(k_v,\hat{M})\to \Ho^1(k,M)^*$ is given by $(a_v)_v\mapsto (x\mapsto \sum_v a_v\cup x)$ and its kernel is by \cite[Théorème 6.3]{demarche2011suites}  the image of $\Ho^1(k,\hat{M})\to \prod'_v\Ho^1(k_v,\hat{M})$. 
In short we have an exact sequence:
\[H^1(k,\hat{M})/\Sha^1(k,\hat{M})\hookrightarrow\prod_{v\in S_\R}H^1(k_v,\hat{M})\times \prod_{v\in S\backslash S_\R}H^1(k_v,\hat{M})\times {\prod_{v\notin S}}^\prime H^1(k_v,\hat{M})\twoheadrightarrow (H^1(k,M)/\Sha^1(k,M))^*\]
which gives the identification 
\begin{align*} \Ho^1_{S\setminus S_\R}(k,\hat{M})/\Sha^1(k,\hat{M})&=\Ker\left(\prod_{v\in S_\R}H^1(k_v,\hat{M})\times {\prod_{v\notin S}}^\prime H^1(k_v,\hat{M})\rightarrow (H^1(k,M)/\Sha^1(k,M))^*\right)\\
&=\Coker\left(H^1(k,M)/\Sha^1(k,M)\rightarrow \prod_{v\in S_\R}H^1(k_v,M)\times {\prod_{v\notin S}}^\prime H^1(k_v,M)\right)^*.\end{align*} Therefore, the surjectivity of $\Ho^1_{S\setminus S_\R}(k,\hat{M})/\Sha^1(k,\hat{M})\to \prod_{v\in S_\R}\Ho^1(k_v,\hat{M})$ is equivalent to the injectivity of the map below defined by the canonical inclusion $\prod_{v\in S_\R}\Ho^1(k_v,M)\to \prod^\prime \Ho^1(k_v,M)$:
\[\prod_{v\in S_\R}\Ho^1(k_v,M)\rightarrow \Coker\left(H^1(k,M)/\Sha^1(k,M)\rightarrow \prod_{v\in S_\R}H^1(k_v,M)\times {\prod_{v\notin S}}^\prime H^1(k_v,M)\right).\] Now we prove this injectivity.  Take a totally imaginary Galois extension $L/k$ such that $L$ splits $M$ and denote for a place $w$ of $L$ by $D_w$ the decomposition group of $w$. For every place $v\in S_\R$ with $w\in \Omega_L$ extending $v$, we have $\Ho^1(k_v,M)=\Ho^1(D_w,M(L))$. The Chebotarev density theorem implies that for all $v\in S_\R$ there is a place $v'\notin S$ with an extension $w'$ to $L$ such that $D_{w'}=D_{w}$. Hence we have: \[\Ho^1(D_w,M(L))=\Ho^1(D_{w'},M(L))\hookrightarrow \Ho^1(k_{v'},M).\]
Here the last arrow comes from the inflation-restriction sequence. This implies that if an element in $\Ho^1(k,M)$ vanishes in $\Ho^1(k_v,M)$ for all $v\notin S$, then it also vanishes in $\Ho^1(k_v,M)$ for all $v\in S_\R$. Thus this finishes the proof of the claim, which is equivalent to the statement in the proposition.
\end{proof}
\end{lem}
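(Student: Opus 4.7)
The plan is to convert the statement, via Poitou-Tate duality, into an injectivity assertion about the Galois cohomology of $M$, and then resolve the latter by Chebotarev density. Set $T \coloneq S \setminus S_\R$; then $\Ho^1_T(k,\hat M)$ consists of global classes vanishing at every $v \in T$.

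Start from local Tate duality $\Ho^1(k_v, \hat M) \cong \Ho^1(k_v, M)^*$ and the Poitou-Tate fragment
\[
\Ho^1(k, \hat M)/\Sha^1(k, \hat M) \hookrightarrow {\prod_v}^\prime \Ho^1(k_v, \hat M) \twoheadrightarrow (\Ho^1(k, M)/\Sha^1(k, M))^*,
\]
in which the surjection is the transpose of the global-to-local restriction for $M$. Splitting the middle restricted product according to $\Omega = S_\R \sqcup T \sqcup (\Omega \setminus S)$ and imposing vanishing of the $T$-components identifies $\Ho^1_T(k, \hat M)/\Sha^1(k, \hat M)$ with the Pontryagin dual of
\[
\Coker\!\Bigl(\Ho^1(k, M)/\Sha^1(k, M) \to \prod_{v \in S_\R} \Ho^1(k_v, M) \times {\prod_{v \notin S}}^\prime \Ho^1(k_v, M)\Bigr).
\]
Dualizing the projection to $\prod_{v \in S_\R} \Ho^1(k_v, \hat M)$ converts the desired surjectivity into the injectivity of the natural map from $\prod_{v \in S_\R} \Ho^1(k_v, M)$ into this cokernel, which boils down to the concrete local-global implication: every $x \in \Ho^1(k, M)$ that vanishes at every $v \notin S$ also vanishes at every $v \in S_\R$.

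To prove this implication I would argue by Chebotarev density. Choose a totally imaginary Galois extension $L/k$ splitting $M$ and large enough that $x$ is inflated from $\Ho^1(\Gal(L/k), M(L))$; for $v \in S_\R$ with prime $w\mid v$ in $L$ (so $L_w = \CC$, whence $\Ho^1(L_w,M) = 0$ and inflation-restriction yields $\Ho^1(k_v, M) = \Ho^1(D_w, M(L_w))$), use Chebotarev density to produce a non-archimedean $v' \notin S$ with a prime $w' \mid v'$ in $L$ such that $D_{w'} = D_w$ as subgroups of $\Gal(L/k)$; via inflation-restriction for the unramified extension $L_{w'}/k_{v'}$, one gets $\Ho^1(D_{w'}, M(L_{w'})) \hookrightarrow \Ho^1(k_{v'}, M)$. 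Since $D_w = D_{w'}$, both restrictions $r_v(x)$ and $r_{v'}(x)$ originate from a common class $\bar x \in \Ho^1(D_w, M(L))$, and the aim is to leverage $r_{v'}(x) = 0$ to force $\bar x = 0$, hence $r_v(x) = 0$.

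The main obstacle is precisely this last step: the $D_w$-modules $M(L_w)$ and $M(L_{w'})$ are genuinely distinct (archimedean versus non-archimedean), so vanishing of the image of $\bar x$ in $\Ho^1(D_w, M(L_{w'}))$ does not a priori force vanishing in $\Ho^1(D_w, M(L_w))$. The plan to get around this is to first reduce to the case of finite $M$ via the decomposition $M = M^\circ \times \pi_0(M)$ (enlarging $L$ so it splits $\pi_0(M)$), because in the finite case $M(L_w) = M(L) = M(L_{w'})$ and the identification $D_w = D_{w'}$ delivered by Chebotarev is enough; the torus case is then handled separately using that over a sufficiently large $L$, $\Ho^1(\Gal(L/k), T(L))$ maps into the archimedean local cohomology of a torus in a way amenable to the same comparison.
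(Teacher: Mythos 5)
Your route coincides with the paper's: you dualize via Poitou--Tate to reduce the surjectivity to the injectivity of $\prod_{v\in S_\R}\Ho^1(k_v,M)\to\Coker(\cdots)$, i.e.\ to the implication that a class in $\Ho^1(k,M)$ vanishing at every $v\notin S$ also vanishes at every $v\in S_\R$, and you then attack that implication by Chebotarev over a totally imaginary Galois splitting field $L$, exactly as the paper does. Your treatment of the finite case is complete and correct and agrees with the paper (and with Sansuc): there $M(L)=M(L_w)=M(L_{w'})=M(\overline{k})$, every class is inflated from a finite level after enlarging $L$, and $D_w=D_{w'}$ finishes the argument.

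The gap is exactly where you place it, but it is more serious than a missing comparison. First, the reduction you propose is unavailable: a group of multiplicative type is only an extension $1\to M^\circ\to M\to\pi_0(M)\to 1$, which need not split as a product of $k$-groups, so you cannot treat the torus and finite parts independently without a genuine d\'evissage. Second, and decisively, the torus case cannot be ``handled separately by the same comparison'', because the implication being proved fails for tori as soon as $S$ contains a finite place. Take $M=R^1_{\Q(i)/\Q}\G_m$, so that $\Ho^1(\Q,M)=\Q^\times/N_{\Q(i)/\Q}(\Q(i)^\times)$ and $\Ho^1(\Q_v,M)=\Q_v^\times/N((\Q(i)\otimes\Q_v)^\times)$, and take $S=\{2,\infty\}$ and $x=[-1]$. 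Since $(-1,-1)_v=1$ for all $v\neq 2,\infty$ while $(-1,-1)_\infty=-1$, the class $x$ is locally trivial at every $v\notin S$ but nontrivial at $\infty\in S_\R$; dually, $\Ho^1(\Q,\hat M)\to\Ho^1(\Q_2,\hat M)$ is an isomorphism of groups of order two, so $\Ho^1_{S\setminus S_\R}(\Q,\hat M)=0$ cannot surject onto $\Ho^1(\R,\hat M)\cong\Z/2$. The local mechanism is the one you suspected: for $v=\infty$ and the Chebotarev place $v'=3$ (inert in $\Q(i)$, so $D_{w'}=D_w$), the element $-1$ is a local norm at $3$ (even valuation) but not at $\infty$, so vanishing at a single auxiliary place with the same decomposition group does not propagate; what saves the case $S=S_\R$ is global reciprocity over \emph{all} finite places, not Chebotarev. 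Consequently no completion of your sketch can work for general $M$ of multiplicative type, and you should flag that the paper's own identifications $\Ho^1(k_v,M)=\Ho^1(D_w,M(L))$ and $\Ho^1(D_{w'},M(L))\hookrightarrow\Ho^1(k_{v'},M)$ break down for the same reason when $M$ has a nontrivial torus part (the group $\Ho^1(D_w,M(L))$ is then typically infinite): the lemma as stated should be restricted to finite $M$, or to $S\setminus S_\R=\emptyset$, or reproved with a genuinely global input.
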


Using the above lemma, we can understand when an adelic point is orthogonal to $\br_{1,S}'X$ or~$\br_{1,S}X$.

\begin{lem}\label{orthogonaltoBr}
Let $r\colon\Ho^1(k,M)\to \prod_{v\notin S}\Ho^1(k_v,M)$ be the restriction map, and let $\langle\delta_v(X(k_v))\rangle$ be the subgroup generated by $\delta_v(X(k_v))$ inside $\Ho^1(k_v,M)$. 
\begin{itemize}
\item A point $(x_v)_{v\notin S}\in X(\A_k^S)$ is orthogonal to~$\br_{1,S}X$ if and only if $(\delta_v(x_v))_{v\notin S}$ lies in $r(\Ho^1(k,M))$.
\item A point $(x_v)_{v\notin S}\in X(\A_k^S)$ is orthogonal to $\br_{1,S}'X$ if and only if $(\delta_v(x_v))_{v\notin S}=(r_v(y))_{v\notin S}$ for some $y\in \Ho^1(k,M)$ such that $y_v:=r_v(y)\in \langle \delta_v(X(k_v))\rangle$ for all $v\in S_\R$.
\end{itemize}
\begin{proof}
We use the commutative diagram (\ref{Poitou-Tate}) with exact rows. For the first statement: Let $(x_v)_{v\notin S}\in X(\A_k^S)$ be orthogonal to $\br_{1,S} X$. By the diagram (\ref{br1pairing}) this is equivalent to $(\delta_v(x_v))_{v\notin S}$ being orthogonal to: $$\{z\in \Ho^1(k,\hat{M})\,|\,-\cup z\colon\Ho^1(k_v,M)\to \Q/\Z\text{ is zero for }v\in S\}.$$
By Poitou-Tate duality (\ref{PTpairing}) we conclude that $(\delta_v(x_v))_{v\notin S}$ goes to zero in $\Ho^1_S(k,\hat{M})^*$ and thus $(\delta_v(x_v))_{v\notin S}\in r(\Ho^1(k,M))$ by diagram (\ref{Poitou-Tate}). Note that the argument reverses which proves the first statement.
\par For the second statement: The adelic point $(x_v)_{v\notin S}$ is orthogonal to $\br_{1,S}'X$ if and only if we have that $(y_v)_{v\notin S}\coloneq (\delta_v(x_v))_{v\notin S}$ is orthogonal to: \begin{equation}\label{H1awayfromS}\{z\in \Ho^1(k,\hat{M})\,|\,-\cup z\colon\delta_v(X(k_v))\to \Q/\Z\text{ is zero for }v\in S\}.\end{equation}
For a finite place $v$, we have that Poitou-Tate duality (\ref{PTpairing}) combined with surjectivity (which is implied by Theorem \ref{kneserharder}) of $X(k_v)\to \Ho^1(k_v,M)$ gives that the $z\in \Ho^1(k,\hat{M})$ being orthogonal to $\delta_v(X(k_v))$ under the natural pairing is equivalent to $r_v(z)=0$. 
\par For one direction, suppose that $(y_v)_{v\notin S}=r(y)$ for some $y\in \Ho^1(k,M)$, which satisfies $r_v(y)\in \langle\delta_v( X(k_v))\rangle$ for all $v\in S_\R$. Then we have for all $z$ in the set (\ref{H1awayfromS}):
\begin{equation}\label{suminvariants}\sum_{v\notin S}y_v\cup r_v(z)=\sum_{v\notin S}r_v(y\cup z)=-\sum_{v\in S} y_v\cup r_v(z)=-\sum_{v\in S_\R}y_v\cup r_v(z).\end{equation}
The rightmost sum vanishes by the assumption on $y$ as in the statement of the Lemma combined with that $z$ lives in the set (\ref{H1awayfromS}). So indeed $(y_v)_{v\notin S}$ is orthogonal to the set (\ref{H1awayfromS}) in this case.
\par Conversely, if $(y_v)_{v\notin S}$ is orthogonal to the set (\ref{H1awayfromS}), then it comes from some $y\in \Ho^1(k,M)$ by the proof of the first statement. So by the same calculation as (\ref{suminvariants}), we obtain that for any $z$ in the set (\ref{H1awayfromS}), the pairing is given by $\langle (y_v)_{v\notin S},z\rangle = -\sum_{v\in S_\R}r_v(y)\cup r_v(z)$. Suppose that there exists $w\in S_\R$ such that $r_w(y)\notin \langle \delta_w(X(k_w))\rangle $. By Lemma \ref{surjective}, we have a surjection: $$\Ho^1_{S\setminus S_\R}(k,\hat{M})\to \prod_{v\in \Omega_\R} \Ho^1(k_v,\hat{M}).$$ 
Since $\langle \delta_w(X(k_w))\rangle$ is a linear subspace of $\Ho^1(k_w,M)$, we may pick some $z_w\in \Ho^1(k_w,\hat{M})$ such that $z_w$ gives the zero map on $\langle \delta_w(X(k_w))\rangle$ and such that $z_w\cup y_w\neq 0$. Pick $z\in \Ho^1_{S\setminus S_\R}(k,\hat{M})$ such that $r_w(z)=z_w$ and such that for all other $v\in S_\R$ we have $r_v(z)=0$. Then note that $z$ lies in the set (\ref{H1awayfromS}) which contradicts that by our construction we have $\langle (y_v)_{v\notin S},z\rangle\neq 0$. So we conclude that the second statement holds.
\end{proof}
\end{lem}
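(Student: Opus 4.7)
The plan is to translate both orthogonality conditions from the Brauer--Manin pairing to the Poitou--Tate cup product pairing on Galois cohomology. Using diagram~(\ref{br1pairing}) together with the identification $\br_1 X/\br k \simeq \Ho^1(k,\hat{M})$ recalled just before the lemma, the Brauer--Manin pairing of an $S$-adelic point $(x_v)_{v\notin S}$ with a class $A \in \br_1 X$ becomes the sum $\sum_{v\notin S} \inv_v\!\bigl(\delta_v(x_v) \cup r_v(z)\bigr)$, where $z \in \Ho^1(k,\hat{M})$ represents $A$ modulo $\br k$. So each statement reduces to identifying the subgroup $B \subseteq \Ho^1(k,\hat{M})$ corresponding to $\br_{1,S}X$ (resp.\ to $\br'_{1,S}X$) and then computing the annihilator of $B$ in $\prod'_{v\notin S}\Ho^1(k_v,M)$ under the pairing arising from the middle row of diagram~(\ref{Poitou-Tate}).

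For the first bullet, I would observe that $\br_{1,S}X$ modulo $\br k$ corresponds to $\Ho^1_S(k,\hat{M})$: localizing the identification $\br_1 X/\br k \simeq \Ho^1(k,\hat{M})$ at $v \in S$, the condition $A|_{X_{k_v}} = 0$ translates, after absorbing a constant, into $r_v(z) = 0$. Orthogonality of $(y_v)_{v\notin S}\coloneq(\delta_v(x_v))_{v\notin S}$ to $\Ho^1_S(k,\hat{M})$ then means exactly that $(y_v)_{v\notin S}$ maps to zero in $\Ho^1_S(k,\hat{M})^*$, and exactness of the top row of diagram~(\ref{Poitou-Tate}) yields $y \in \Ho^1(k,M)$ with $r(y) = (y_v)_{v\notin S}$, as desired.

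For the second bullet the subgroup $B$ corresponding to $\br'_{1,S}X$ is trickier to pin down. At a finite $v \in S\setminus S_\R$, Theorem~\ref{kneserharder} gives $\Ho^1(k_v,G)=0$, hence $\delta_v$ is surjective, so the condition ``$-\cup r_v(z)$ vanishes on $\delta_v(X(k_v))$'' forces $r_v(z)=0$ by local Poitou--Tate duality; thus $B \subseteq \Ho^1_{S\setminus S_\R}(k,\hat{M})$, further cut out by an annihilation condition at each $v \in S_\R$. For sufficiency, given $(y_v)_{v\notin S} = r(y)$ with $r_v(y) \in \langle\delta_v(X(k_v))\rangle$ for all $v \in S_\R$, global reciprocity applied to $y\cup z$ reduces $\sum_{v\notin S} y_v\cup r_v(z)$ to $-\sum_{v\in S_\R} r_v(y)\cup r_v(z)$, and bilinearity of the local cup product promotes the vanishing of $-\cup r_v(z)$ on $\delta_v(X(k_v))$ to the generated subgroup, killing every remaining term. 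For necessity, the inclusion $\br_{1,S} X \subseteq \br'_{1,S} X$ together with the first bullet provides $y \in \Ho^1(k,M)$ with $r(y)=(y_v)_{v\notin S}$; assuming for contradiction that $r_w(y) \notin \langle\delta_w(X(k_w))\rangle$ for some $w \in S_\R$, local Poitou--Tate duality (which separates a subgroup from an element outside it in a perfect pairing of finite groups) furnishes $z_w \in \Ho^1(k_w,\hat{M})$ that kills $\langle\delta_w(X(k_w))\rangle$ but satisfies $z_w \cup r_w(y) \neq 0$. Lemma~\ref{surjective} then globalizes the tuple $(z_v)_{v\in S_\R}$ with $z_v=0$ for $v\neq w$ to some $z \in \Ho^1_{S\setminus S_\R}(k,\hat{M})$, which by construction lies in $B$; the pairing against $(y_v)_{v\notin S}$ collapses to $-r_w(y)\cup z_w \neq 0$, the desired contradiction.

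The main obstacle will be the necessity direction of the second bullet: one must arrange the globalization furnished by Lemma~\ref{surjective} so that $z$ genuinely lies in $B$, i.e.\ so that $z$ simultaneously annihilates $\delta_v(X(k_v))$ at every $v \in S_\R\setminus\{w\}$ and is trivial at every $v \in S \setminus S_\R$, while still separating $r_w(y)$ from $\langle\delta_w(X(k_w))\rangle$ at $w$. The transition from ``annihilates a set'' to ``annihilates its generated subgroup'' is automatic by bilinearity, but it is precisely what motivates the appearance of the subgroup $\langle\delta_v(X(k_v))\rangle$ on the right-hand side of the stated equivalence.
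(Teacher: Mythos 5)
Your proposal is correct and follows essentially the same route as the paper: translating both orthogonality conditions through diagram (\ref{br1pairing}) into the Poitou--Tate pairing, using surjectivity of $\delta_v$ at finite places to force $r_v(z)=0$, and in the necessity direction of the second bullet constructing a separating class $z_w$ at the offending real place $w$ and globalizing it via Lemma \ref{surjective} with trivial components at the other places of $S$. The paper's proof is the same argument, including the reduction of the sum to $-\sum_{v\in S_\R} r_v(y)\cup r_v(z)$ by global reciprocity and the use of bilinearity to pass from $\delta_v(X(k_v))$ to the subgroup it generates.
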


We recall the notation $\Sha^1_S(k,-)$ as defined in \S \ref{notation}.

\begin{thm}\label{main3}
Suppose $G$ satisfies strong approximation away from $S$. We have

\begin{enumerate}[label=(\alph*)]
\item $X(\A_k^S)^{\br_{1,S}}=\overline{X(k)}^S$ if and only if for $r\coloneq (r_v)_{v\in S_\R}$ there is an equality: \begin{equation}\label{brauer1}\prod_{v\in S_\R}\Ho^1(k_v,M)=r(\Sha^1_S(k,M))+\left[\prod_{v\in S_\R}\delta_v(X(k_v))\right]\tag{*}\end{equation}
 \item $X(\A_k^S)^{\br'_{1,S}}=\overline{X(k)}^S$ if and only if for $r\coloneq (r_v)_{v\in S_\R}$ there is an inclusion:  \begin{equation}\label{brauer1'}\prod_{v\in S_\R}\langle\delta_v(X(k_v))\rangle\subseteq r(\Sha^1_S(k,M))+\left[\prod_{v\in S_\R}\delta_v(X(k_v))\right].\tag{**}\end{equation}\end{enumerate}
\begin{proof}
Suppose that the equation (\ref{brauer1}) holds and pick any $(x_v)_{v\notin S}$ that is orthogonal to $\br_{1,S} X$ and write $y_v\coloneq \delta_v(x_v)$. By Lemma \ref{S-closure}, it will suffice to show that there exists an $z\in \Ho^1(k,M)$ such that $(r_v(z))_{v\notin S}=(y_v)_{v\notin S}$ and such that $r_w(z)\in \delta_w(X(k_w))$ for all $w\in S_\R$. It follows from Lemma \ref{orthogonaltoBr} that there is $y\in \Ho^1(k,M)$ such that $r_v(y)=y_v$ for $v\notin S$ and by equation (\ref{brauer1}) we can add an element of $\Sha^1_S(k,M)$ to $z$ such that $r_w(z)\in \delta_w(X(k_w))$ for all $w\in S_\R$. 
\par Conversely, suppose that there is some $(y_w)_{w\in S_\R}\in \prod_{w\in S_\R}\Ho^1(k_w,M)$ that is not the sum of an element in $r(\Sha^1_S(k,M))$ with an element in $\prod_{w\in S_\R}\delta_w(X(k_w))$. By Lemma \ref{surjective}, the map $\Ho^1(k,M)\to \bigoplus_{v\in \Omega_\R}\Ho^1(k_v,M)$ is surjective. Pick $y\in \Ho^1(k,M)$ such that $r_v(y)=0$ for $v\in \Omega_\R\setminus S_\R$ and $r_v(y)=y_v$ for $v\in S_\R$. By combining Proposition \ref{pointedsets} and Theorem \ref{kneserharder} we see that $(r_v(y))_{v\notin S}$ lifts to a point $(x_v)_{v\notin S}$, is orthogonal to $\br_{1,S}X$ by Lemma \ref{orthogonaltoBr}. However $(x_v)_{v\notin S}$ does not lie in $\overline{X(k)}^S$ by Lemma \ref{S-closure}.
\par The proof of the second statement is analogous and also follows from Lemma \ref{orthogonaltoBr}.
\end{proof}
\end{thm}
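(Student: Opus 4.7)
The plan is to combine the three preceding lemmas. Lemma \ref{orthogonaltoBr} converts orthogonality to $\br_{1,S}X$ or $\br^\prime_{1,S}X$ into conditions on $(\delta_v(x_v))_{v\notin S}$ in $\prod^\prime_{v\notin S}\Ho^1(k_v,M)$; Lemma \ref{S-closure} converts membership in $\overline{X(k)}^S$ into an analogous condition; the gap between the two conditions lives only over the real places of $S$, which is exactly what the hypotheses (\ref{brauer1}) and (\ref{brauer1'}) control. Along the way I also need the surjectivity of $\Ho^1(k,M)\to\bigoplus_{v\in\Omega_\R}\Ho^1(k_v,M)$, which follows from Lemma \ref{surjective} (applied with $S=\Omega_\R$) together with Poitou--Tate duality, and I invoke Theorem \ref{kneserharder} and Proposition \ref{pointedsets} to lift $\Ho^1$-classes to adelic points on $X$.

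For the forward direction of (a), take $(x_v)_{v\notin S}\in X(\A_k^S)^{\br_{1,S}}$. By Lemma \ref{orthogonaltoBr} there exists $y\in\Ho^1(k,M)$ with $r_v(y)=\delta_v(x_v)$ for all $v\notin S$. By (\ref{brauer1}) I may write $(r_w(y))_{w\in S_\R}=r(s)+(\delta_w(x_w^0))_{w\in S_\R}$ for some $s\in\Sha^1_S(k,M)$ and $x_w^0\in X(k_w)$. Setting $z\coloneq y-s$, the vanishing of $s$ outside $S$ gives $r_v(z)=\delta_v(x_v)$ for $v\notin S$, while $r_w(z)=\delta_w(x_w^0)\in\delta_w(X(k_w))$ for $w\in S_\R$ and hence vanishes in $\Ho^1(k_w,G)$ by exactness. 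Lemma \ref{S-closure} then places $(x_v)_{v\notin S}$ in $\overline{X(k)}^S$.

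For the reverse direction of (a), suppose (\ref{brauer1}) fails and pick $(y_w)_{w\in S_\R}$ not in $r(\Sha^1_S(k,M))+\prod_{w\in S_\R}\delta_w(X(k_w))$. Using surjectivity of $\Ho^1(k,M)\to\bigoplus_{v\in\Omega_\R}\Ho^1(k_v,M)$, lift the family (extended by zero at each $v\in\Omega_\R\setminus S_\R$) to some $y\in\Ho^1(k,M)$. Since $r_v(y)$ vanishes in $\Ho^1(k_v,G)$ at every archimedean place (by the chosen vanishing at $\Omega_\R\setminus S_\R$ and by Theorem \ref{kneserharder} at the non-archimedean places), Proposition \ref{pointedsets} lifts $(r_v(y))_{v\notin S}$ to some $(x_v)_{v\notin S}\in X(\A_k^S)$, which is orthogonal to $\br_{1,S}X$ by Lemma \ref{orthogonaltoBr}. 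If $(x_v)_{v\notin S}$ were in $\overline{X(k)}^S$, Lemma \ref{S-closure} would produce $z\in\Ho^1(k,M)$ with $r_v(z)=r_v(y)$ for $v\notin S$ and $r_w(z)\in\delta_w(X(k_w))$ for $w\in S_\R$; but then $z-y\in\Sha^1_S(k,M)$, so $(y_w)_{w\in S_\R}\in r(\Sha^1_S(k,M))+\prod_{w\in S_\R}\delta_w(X(k_w))$, contradicting the choice of $(y_w)$.

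Part (b) runs along identical lines, using the second bullet of Lemma \ref{orthogonaltoBr}: in the forward direction (\ref{brauer1'}) allows any $y$ with $r_w(y)\in\langle\delta_w(X(k_w))\rangle$ to be corrected by an element of $\Sha^1_S(k,M)$ so that the new class has $r_w(z)\in\delta_w(X(k_w))$ for all $w\in S_\R$; in the reverse direction a witness $(y_w)\in\prod_{w\in S_\R}\langle\delta_w(X(k_w))\rangle$ to the failure of (\ref{brauer1'}) yields via the same lifting argument an adelic point orthogonal to $\br^\prime_{1,S}X$ but lying outside $\overline{X(k)}^S$. The main subtlety, and the only real source of friction between (a) and (b), is keeping straight the distinction between $\delta_w(X(k_w))$ (which governs closure through Lemma \ref{S-closure}) and the subgroup $\langle\delta_w(X(k_w))\rangle$ it generates (which governs orthogonality through Lemma \ref{orthogonaltoBr}); this asymmetry is precisely why (\ref{brauer1'}) takes the form of an inclusion rather than the equality appearing in (\ref{brauer1}).
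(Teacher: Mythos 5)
Your proposal is correct and follows essentially the same route as the paper: Lemma \ref{orthogonaltoBr} to translate orthogonality into the existence of a global class $y$, Lemma \ref{surjective} (dualized) for surjectivity onto the real places, the lifting via Proposition \ref{pointedsets} and Theorem \ref{kneserharder}, and Lemma \ref{S-closure} to detect membership in $\overline{X(k)}^S$, with conditions (\ref{brauer1}) and (\ref{brauer1'}) bridging the gap at $S_\R$. Your write-up is in fact slightly more explicit than the paper's in the converse direction (spelling out that $z-y\in\Sha^1_S(k,M)$ yields the contradiction); the only nit is that the vanishing of $r_v(y)$ in $\Ho^1(k_v,G)$ is only needed, and only holds, at archimedean places \emph{outside} $S$, not at every archimedean place as your parenthetical suggests.
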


\subsection{Example of $\overline{X(k)}^S=X(\A_k^S)^{\br^\prime_{S}}\subsetneq X(\A_k^S)^{\br_{S}}$}
As promised, we give examples showing the necessity of considering $\br^\prime_S X$ (versus just its subgroup $\br_S X$) to cut out the strong approximation locus in certain cases. We first give an example with finite commutative stabilizer.

\begin{prop}\label{egSO}
Set $X=\SO_n$ and set $G=\Spin_n$ which are varieties over $k=
\QQ$. Then $X$ is a homogeneous space under $G$ with stabilizer $\mu_2$. Take $S=\{\infty,v_0\}$ where $v_0$ is a finite place. Then $$\overline{X(k)}^S=X(\A_k^S)^{\br^\prime_{S}}\subsetneq X(\A_k^S)^{\br_{S}}.$$
\end{prop}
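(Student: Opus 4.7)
The plan is to apply Theorem \ref{main3} with stabilizer $M=\mu_2$ from the central extension $1\to\mu_2\to\Spin_n\to\SO_n\to 1$. The strategy has three ingredients: (i) check condition (**) to get $\overline{X(k)}^S=X(\A_k^S)^{\br'_S}$; (ii) show condition (*) fails to get $X(\A_k^S)^{\br_{1,S}}\supsetneq \overline{X(k)}^S$; and (iii) upgrade from $\br_{1,S}$ to $\br_S$ on the right.

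For (i), I would compute $\delta_\infty\colon \SO_n(\RR)\to H^1(\RR,\mu_2)=\ZZ/2$. Both $\Spin_n(\RR)$ and $\SO_n(\RR)$ are connected Lie groups (taking the compact form) and $\Spin_n\to\SO_n$ is a double cover, so $\Spin_n(\RR)\to \SO_n(\RR)$ is surjective and $\delta_\infty\equiv 0$ by exactness. In particular $\langle\delta_\infty(X(\RR))\rangle=0$, so condition (**) of Theorem \ref{main3} holds trivially. Theorem \ref{main3}(b) then gives $\overline{X(k)}^S=X(\A_k^S)^{\br'_{1,S}}$, and combining with the automatic inclusions $\overline{X(k)}^S\subseteq X(\A_k^S)^{\br'_S}\subseteq X(\A_k^S)^{\br'_{1,S}}$ yields $\overline{X(k)}^S=X(\A_k^S)^{\br'_S}$.

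For (ii), I would verify $\Sha^1_S(k,\mu_2)=0$. By Kummer theory, any element is represented by a class $[d]\in \QQ^\times/(\QQ^\times)^2$ with $d$ a square in every $\QQ_q$ with $q\notin\{\infty,v_0\}$. Matching valuations at primes $q\neq v_0$ forces $d\equiv \pm v_0^e\pmod{(\QQ^\times)^2}$ for some $e\in\{0,1\}$; a quadratic-reciprocity (or Chebotarev density) argument then rules out $-1$, $v_0$ and $-v_0$, leaving only $d=1$. Hence $r(\Sha^1_S(k,\mu_2))=0\subsetneq H^1(\RR,\mu_2)$, condition (*) fails, and Theorem \ref{main3}(a) produces an adelic point in $X(\A_k^S)^{\br_{1,S}}\setminus \overline{X(k)}^S$.

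Finally for (iii), I would argue $\br_S X=\br_{1,S} X$. Any $B\in\br X\setminus \br_1 X$ has nonzero image $\bar B\in \br\bar X$; by the standard invariance of the geometric Brauer group of a smooth variety under extensions of algebraically closed fields, the further base change of $\bar B$ to $\overline{k_v}$ is still nonzero, so $B|_{X_{k_v}}\neq 0$ in $\br X_{k_v}$ for every place $v$, and hence $B\notin \br_S X$. Combining (i)--(iii) gives $X(\A_k^S)^{\br_S}=X(\A_k^S)^{\br_{1,S}}\supsetneq \overline{X(k)}^S=X(\A_k^S)^{\br'_S}$, as claimed. I expect step (iii) to be the most delicate, since it depends on the structure and localization behavior of the transcendental Brauer group of $\SO_n$; step (ii) is elementary but requires some careful reciprocity bookkeeping.
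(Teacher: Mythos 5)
Your proposal is correct and follows essentially the same route as the paper: both arguments rest on Theorem \ref{main3} (equivalently Lemma \ref{orthogonaltoBr}), the vanishing of $\delta_\infty$ coming from the connectedness of $\SO_n(\R)$, and the identification $\br X=\br_1 X$ so that $\br_S X$ and $\br_{1,S}X$ cut out the same set; the paper merely exhibits the explicit adelic point attached to $-1\in\Q^\times/\Q^{\times 2}$ where you instead invoke the failure of condition (*) via $\Sha^1_S(\Q,\mu_2)=0$. The only hypothesis worth recording explicitly is that $\Spin_n$ satisfies strong approximation away from $S$, which holds because $\Spin_n(\Q_{v_0})$ is non-compact.
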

\begin{proof}
We have 
$$\br X/\br k=\br_1 X/\br k\simeq \Ho^1(k,\mu_2)\simeq k^\times/k^{\times2}.$$ The map $G(\RR)\rightarrow X(\RR)$ is surjective since $\SO_n(\R)$ is connected, so the map $X(\RR)\rightarrow \Ho^1(\RR,\mu_2)$ has trivial image. Therefore, every element in $\br X_\RR$ induces the trivial evaluation map on $X(\RR)$. We know that $G$ satisfies strong approximation away from $S$ since $G_{v_0}$ is not compact. Let $(a_v)_{v\notin S}\in\prod_{v
\notin S}^\prime \Ho^1(\QQ_v,\mu_2)$ be the image of $-1\in \QQ^\times/\QQ^{\times 2}\simeq \Ho^1(\QQ,\mu_2)$ under the diagonal map. Lift $a_v$ to a local point $P_v\in X(\QQ_v)$ which is possible by the vanishing of $\Ho^1(\QQ_v,G)$, and $(P_v)_{v\notin S}$ is actually inside $X(\A_k^S)$ by Proposition \ref{pointedsets}. Then by Lemma \ref{orthogonaltoBr}, we see that $(P_v)_{v\notin S}\in X(\A_k^S)$ is orthogonal to $\br_S X$ but not orthogonal to $\br_S^\prime X$. Finally, since $\mu$ is central in $G$, the condition (\ref{brauer1'}) in Theorem \ref{main3} is satisfied and we get $\overline{X(k)}^S=X(\A_k^S)^{\br^\prime_{1,S}}.$
\end{proof}

Then we give another example with toric stabilizer. 
\begin{prop}\label{egSpin/T}
Let $q(x,y,z)$ be a quadratic form in 3 variables over $\Q$. Consider $X/\Q$ a smooth affine quadric defined by an equation $q(x,y,z)=a$, which is a homogeneous space under $\Spin (q)$ with stabilizer a one-dimensional torus. Assume $X$ has a rational point. Let $d\coloneq -a\det q$. Suppose $q$ is anisotropic over $\R$ and $d\notin \R^ {\times 2}$ (i.e. $d<0$), but there is a finite place $v$ such that $q$ is isotropic over $k_v$ and $d\in k_v^ {\times 2}.$ Let $S=\{\infty, v\}$. Then $$\overline{X(k)}^S=X(\A_k^S)^{\br^\prime_{S}}\subsetneq X(\A_k^S)^{\br_{S}}.$$
\end{prop}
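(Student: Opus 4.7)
The plan is to apply Theorem~\ref{main3} after identifying the torus $T$ and computing its Galois cohomology at the relevant places. Fix a base point $x_0\in X(\Q)$; the stabilizer of $x_0$ in $\SO(q)$ is $\SO(q|_{x_0^\perp})$, a one-dimensional torus of discriminant (up to squares) $-a\det q=d$, hence isomorphic to the norm-one torus $R^{(1)}_{L/\Q}\Gm$ with $L=\Q(\sqrt d)$. The stabilizer $T$ in $\Spin(q)$ is a two-fold cover of this torus, but its character lattice $\hat T$ is a free $\Z$-module of rank one on which $\Gal(L/\Q)$ acts by $-1$ (as on the index-two sublattice $\hat R^{(1)}_{L/\Q}\Gm$), giving $T\cong R^{(1)}_{L/\Q}\Gm$ as $\Q$-tori. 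Hence $H^1(\Q,T)\cong\Q^\times/N_{L/\Q}(L^\times)$, $H^1(\R,T)\cong\R^\times/\R_{>0}=\Z/2$ (nontrivial since $d<0$), and $H^1(\Q_v,T)=0$ (since $L$ splits at $v$). Moreover, anisotropy of $q$ over $\R$ together with $X(\R)\neq\emptyset$ makes $X(\R)$ topologically a 2-sphere; the connected group $\Spin(q)(\R)$ acts on this connected manifold with open orbits, hence transitively, giving $\delta_\infty(X(\R))=\{0\}$. Strong approximation for $\Spin(q)$ away from $S$ holds since $\Spin(q)(\Q_v)$ is noncompact ($q$ being isotropic over $\Q_v$), so Theorem~\ref{main3} applies.

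With these inputs, condition~\eqref{brauer1'} of Theorem~\ref{main3}(b) holds trivially (both sides equal $\{0\}$), so $\overline{X(k)}^S=X(\A_k^S)^{\br^\prime_{1,S}}$. Since $\br^\prime_{1,S} X\subseteq \br^\prime_S X$, the chain~\eqref{chainofinclusions} sandwiches $X(\A_k^S)^{\br^\prime_S}$ between $\overline{X(k)}^S$ and $X(\A_k^S)^{\br^\prime_{1,S}}$, so the equality $\overline{X(k)}^S=X(\A_k^S)^{\br^\prime_S}$ follows with no further hypothesis.

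For the strict inclusion $X(\A_k^S)^{\br^\prime_S}\subsetneq X(\A_k^S)^{\br_S}$, I first show~\eqref{brauer1} fails. Any $\alpha\in\Sha^1_S(\Q,T)\subseteq\Q^\times/N_{L/\Q}(L^\times)$ satisfies Hilbert reciprocity $\sum_{v'}(\alpha,L/\Q)_{v'}=0$ in $\Z/2$; the $\Sha^1_S$-condition kills the contributions at $v'\notin S$, while $L$ splitting at $v$ kills $(\alpha,L/\Q)_v$, forcing $(\alpha,L/\Q)_\infty=0$, i.e.\ $r_\infty(\alpha)=0$ in $H^1(\R,T)=\Z/2$. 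Thus $r(\Sha^1_S(\Q,T))=0$ and~\eqref{brauer1} fails, which gives $X(\A_k^S)^{\br_{1,S}}\supsetneq\overline{X(k)}^S$ by Theorem~\ref{main3}(a). To upgrade this to $X(\A_k^S)^{\br_S}\supsetneq\overline{X(k)}^S$, I verify $\br X=\br_1 X$ by showing $\br\overline X=0$: let $Y\subset\P^3$ be the smooth projective closure of $X$; then $Y_{\overline\Q}$ is a smooth quadric surface, isomorphic to $\P^1_{\overline\Q}\times\P^1_{\overline\Q}$ with trivial Brauer group, and the boundary $Y\setminus X$ over $\overline\Q$ is a smooth conic $\cong\P^1_{\overline\Q}$, so purity combined with $H^1_{\et}(\P^1_{\overline\Q},\Q/\Z)=0$ gives $\br\overline X=0$; in particular $\br_S X=\br_{1,S}X$. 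An explicit witness is $\alpha=-1\in\Q^\times$: its class in $H^1(\Q,T)$ is nontrivial at $\infty$ (since $-1\notin N_{\C/\R}(\C^\times)=\R_{>0}$); by surjectivity of $X(\Q_{v'})\to H^1(\Q_{v'},T)$ at nonarchimedean $v'$ (from $H^1(\Q_{v'},\Spin(q))=0$ via Theorem~\ref{kneserharder} and Proposition~\ref{pointedsets}), lift $(r_{v'}(-1))_{v'\notin S}$ to an adelic point $(P_{v'})\in X(\A_k^S)$. By Lemma~\ref{orthogonaltoBr}, $(P_{v'})\in X(\A_k^S)^{\br_{1,S}}=X(\A_k^S)^{\br_S}$, while any global lift $y$ of its image differs from $-1$ by an element of $\Sha^1_S(\Q,T)$, so $r_\infty(y)=r_\infty(-1)\neq 0\notin\langle\delta_\infty(X(\R))\rangle=\{0\}$, placing $(P_{v'})$ outside $X(\A_k^S)^{\br^\prime_S}$.

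The main obstacle I anticipate is the rigorous verification of $\br X=\br_1 X$; the purity argument on the smooth projective quadric surface $Y\subset\P^3$ needs to be spelled out carefully (alternatively, one can invoke known triviality of the geometric Brauer group of smooth affine quadrics). The Hilbert-symbol computation, the explicit shape of $T$, and the transitivity of $\Spin(q)(\R)$ on the 2-sphere $X(\R)$ are standard.
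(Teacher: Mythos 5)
Your proof is correct, but it follows a genuinely different route from the paper's. The paper treats this example as an application of Colliot-Th\'el\`ene--Xu's explicit computation of the Brauer group of the affine quadric: citing \cite[Proposition 4.1, Lemma 4.4, Proposition 4.5]{colliot2013strong}, it observes that $\br X/\br\Q\cong\br X_\R/\br\R\cong\Z/2\Z$, so that $\br_S X/\br_S\Q=0$ (hence $X(\A_\Q^S)^{\br_S}=X(\A_\Q^S)$, which is even stronger than your conclusion), while the nontrivial class has constant evaluation at both places of $S$ but non-constant evaluation at some $v_1\notin S$, so it survives in $\br'_SX/\br_S\Q\cong\Z/2\Z$ and gives a genuine obstruction; the equality $\overline{X(\Q)}^S=X(\A_\Q^S)^{\br'_S}$ then comes from \cite[Proposition 4.5]{colliot2013strong} together with Theorem~\ref{closed}, not from Theorem~\ref{main3}. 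You instead identify the stabilizer as the norm-one torus $R^1_{\Q(\sqrt d)/\Q}\G_m$, compute its local cohomology at $\infty$ and $v$, show $\delta_\infty(X(\R))=\{0\}$ by transitivity of $\Spin(q)(\R)$ on the ellipsoid, and verify conditions \eqref{brauer1'} (trivially) and the failure of \eqref{brauer1} (via Hilbert reciprocity forcing $r_\infty(\Sha^1_S(\Q,T))=0$), which is exactly the mechanism Theorem~\ref{main3} is designed to detect; you then need $\br X=\br_1X$ (via $\br\overline X=0$, by purity on the quadric surface) to pass from $\br_{1,S}$ to $\br_S$, a fact the paper also gets from \cite[Proposition 4.1]{colliot2013strong}. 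Your approach is more self-contained relative to the paper's general machinery and makes transparent \emph{why} condition \eqref{brauer1} fails here, at the cost of losing the sharper statement $X(\A_\Q^S)^{\br_S}=X(\A_\Q^S)$; the only step you should spell out with care is the purity computation of $\br\overline X$ (or simply cite it, as the paper does). Two cosmetic points: the phrase ``$r_\infty(y)\neq 0\notin\langle\delta_\infty(X(\R))\rangle$'' should be rewritten as two separate assertions, and you should note explicitly that $d<0$ guarantees $d\notin\Q^{\times 2}$, so the torus is indeed nonsplit over $\Q$.
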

\begin{proof}
We have $\br X_{\RR}/\br \RR\simeq \br X /\br \QQ\simeq \ZZ/2\ZZ$ by \cite[Proposition 4.1 and Lemma 4.4]{colliot2013strong}. Hence $\br_S X/\br_S \Q=0$ and $X(\A_{\Q}^S)^{\br_S}=X(\A_{\Q}^S).$ However, by \cite[Lemma~4.4]{colliot2013strong} and Remark \ref{moduloconstants}, we have that $$\br_S^{\prime} X/\br_ S \Q=\{\overline\alpha\in\br X/\br \Q|\alpha^ *\colon X(\Q_v)\rightarrow \br\Q_v\text{ is constant for all } v\in S\}\simeq\ZZ/2\ZZ,$$ and that there exists a place $v_1\notin S$ such that $\xi$ takes distinct values over $X(\Q_{v_1})$ where $\xi\in \br X$ has nonzero image in $\br_S^{\prime} X/\br_S\Q $. Therefore, we have $X(\A_{\Q}^S)^{\br^\prime_S}\subsetneq X(\A_{\Q}^S),$ giving an obstruction to strong approximation away from $S$. By \cite[Proposition ~4.5]{colliot2013strong} and Theorem \ref{closed}, we know that $\overline{X(\Q)}^S={\pr_S(X(\A_{\Q})^{\br}}$, but $\br X/\br \Q=\br_S X/\br_S \Q$ cut out the same Brauer-Manin set, so we have $\pr_S(X(\A_{\Q})^{\br})={\pr_S(X(\A_{\Q})^{\br^ {\prime}})}=X(\A_{\Q}^S)^{\br^{
\prime}_S}=\overline{X(\Q)}^S$. 
\end{proof}
\begin{eg}
Let $q$ be the quadratic form $x^2+y^2+z^2$ over $\Q$. Clearly it is anisotropic over $\R$, but isotropic over $\Q_5$ since we have the nontrivial zero $(0,i,1)$ of $q$ where $i^2=-1$. Notice that the discriminant of $q$ equals $1$. Now take the equation $q(x,y,z)=1$ defining our smooth affine quadric $X/\Q$, which has a rational point $(1,0,0)$. Since $d=-\det(q)=-1$, which is not a square in $\R$ but a square in $\Q_5$, we can apply the above proposition to $S=\{\infty, 5\}$ and get $\overline{X(\Q
)}^S={\pr_S(X(\A_
{\Q})^{\br}})= X(\A_
{\Q}^S)^{\br^\prime_S}\subsetneq X(\A_
{\Q}^S)^{\br_S}=X(\A_
{\Q}^S).$
\end{eg}

\subsection{Example of $\overline{X(k)}^S=X(\A_k^S)^{\br^\prime_{S}}\subsetneq X(\A_k^S)^{\br^\prime_{1,S}}$}
In this section, we show that transcendental elements in $\br^\prime_S X$ can play a role in cutting out the precise strong approximation locus.

\begin{prop}{\cite[Proposition~3.4]{arteche2018unramified}}\label{extended pairing}
Set $X=G/\mu$ where $\mu$ is a finite group scheme. Let $K$ be a field extension of $k$. There is a pairing $\Ho^1(K,\mu)\times \br{X}\to \br K $ such that the following diagram commutes:\begin{equation}
\begin{tikzcd}
X(K) \arrow[d] \arrow[r,phantom,"\times" description] & \br X \arrow[r] \arrow[d,Rightarrow, no head]      & \br K \arrow[d,Rightarrow, no head] \\
\Ho^1(K,\mu)                                \arrow[r,phantom,"\times" description] & {\br X} \arrow[r]  & \br K.     
\end{tikzcd}
\end{equation}
\end{prop}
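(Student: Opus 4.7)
The plan is to generalize the argument of Lemma~\ref{factorviaH1}, which used the isomorphism $\br K \simeq \br G$ for $G$ semisimple and simply connected, by twisting the $\mu$-torsor $G \to X$ by the class $a$. Given $a \in \Ho^1(K,\mu)$ represented by a $\mu$-torsor $P_a \to \Spec K$, I would form the contracted product $G^a \coloneq G_K \times^\mu P_a$ using the right $\mu$-action on $G_K$ by translation through $\mu \hookrightarrow G$. Then $G^a$ is a left $G_K$-torsor over $\Spec K$ whose class in $\Ho^1(K,G)$ is the image of $a$, and because $\mu$ stabilizes $o \in X(k)$, the quotient map $g \mapsto g\cdot o$ is $\mu$-invariant and descends to a morphism $\pi_a \colon G^a \to X_K$ of $K$-schemes.

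The next step is to check that $\br G^a = \br K$. Geometrically $G^a$ is isomorphic to $G_{\bar K}$, so both $\br (G^a)_{\bar K}$ and $\Pic (G^a)_{\bar K}$ vanish, and since $G$ is ssc the character module $\bar K[G]^\times/\bar K^\times$ is zero; the Hochschild--Serre spectral sequence then forces $\br K \xrightarrow{\sim} \br G^a$, exactly as in the cited \cite[Proposition~9.2.1]{colliot2021brauer} used in Lemma~\ref{factorviaH1}. Define $\langle a,\beta\rangle \in \br K$ to be the preimage of $\pi_a^*\beta \in \br G^a$ under this canonical isomorphism. Functoriality in $K$ is immediate, since $P_a$, $G^a$, and $\pi_a$ all commute with base change.

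The final step is verifying compatibility with the evaluation pairing on $X(K)$. For $x \in X(K)$, the fiber $G \times_X \{x\}$ is naturally a $\mu$-torsor representing $\delta(x)$, and embeds as a closed $\mu$-sub-scheme of $G_K$; take this as $P_{\delta(x)}$. Then the map $(g,p) \mapsto g p^{-1}$ on $G_K \times P_{\delta(x)}$ is $\mu$-invariant, since $(gm)(pm)^{-1} = g p^{-1}$, and descends to a $G_K$-equivariant isomorphism $G^{\delta(x)} \isoto G_K$ under which $\pi_{\delta(x)}$ corresponds to the orbit map $\rho_x \colon g \mapsto g\cdot x$ (indeed, writing $h=gp^{-1}$ gives $g=hp$ and $\pi_{\delta(x)}(h)=hp\cdot o=h\cdot x$ using $p\cdot o=x$). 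Since $G_K$ carries the $K$-point $e$, the canonical identification $\br G_K \simeq \br K$ is pullback along $e$, so $\langle \delta(x),\beta\rangle = e^*\rho_x^*\beta = x^*\beta$, yielding the required commutativity of the diagram.

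The main obstacle is this compatibility check: one must verify that the abstract identification $\br G^{\delta(x)} \simeq \br K$ (which makes sense even when $G^a$ has no $K$-point for general $a$) coincides with the concrete identification via evaluation at $e \in G_K(K)$, once the $K$-point becomes available in the special case $a = \delta(x)$. This is essentially bookkeeping with the right and left actions and the various equivariance conventions, but each convention needs to be tracked carefully so that $G_K$-equivariance of $G^{\delta(x)} \cong G_K$ matches the orbit map through $x$ and not, say, through $g_0 \cdot x$ for some auxiliary choice $g_0$.
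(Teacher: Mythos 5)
Your construction is correct, but it is genuinely different from the one the paper relies on. The paper does not reprove this proposition: it cites Arteche, and the pairing it actually uses is the group-cohomological one described in Remark~\ref{descibedpairing} --- identify $\br X_K$ with $\Ho^2(\pi_1(X_K),\overline{K}^\times)$ via Hochschild--Serre, view a class $a\in\Ho^1(K,\mu)$ as a conjugacy class of sections $s_a\colon\Gamma_K\to\pi_1(X_K)$, and set $\langle a,\beta\rangle=s_a^*\beta$. You instead twist: form $G^a=G_K\times^{\mu}P_a$, check $\br K\isoto\br G^a$ by the same Hochschild--Serre argument that underlies Lemma~\ref{factorviaH1} (this works whether or not $G^a$ has a $K$-point, since $\Pic \overline{G^a}=\br\overline{G^a}=0$ and $\overline{K}[G^a]^\times=\overline{K}^\times$ for $G$ semisimple simply connected), and define the pairing as $(\text{structure pullback})^{-1}\circ\pi_a^*$. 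Your compatibility check is the right one and is carried out correctly: the fiber $G\times_X\{x\}$ represents $\delta(x)$, the map $[g,p]\mapsto gp^{-1}$ identifies $G^{\delta(x)}$ with $G_K$ carrying $\pi_{\delta(x)}$ to the orbit map $\rho_x$, and evaluation at $e$ inverts the structure pullback, giving $\langle\delta(x),\beta\rangle=x^*\beta$. What each approach buys: yours is self-contained, purely geometric, and works verbatim for non-commutative finite $\mu$; the paper's (Arteche's) version produces an explicit cocycle-level formula, which is what the later computations actually exploit (pulling back group extensions such as $D_8$ along sections in Proposition~\ref{egSU/T2}, and the K\"unneth decompositions in Proposition~\ref{egSU/T4}). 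One caveat, not a gap in your proof of the statement as literally phrased: the proposition only asserts existence of \emph{a} compatible pairing, and when $X(K)\to\Ho^1(K,\mu)$ is not surjective such a pairing is not unique, so if your pairing were to be substituted into the later arguments one would additionally have to check it agrees with the $\pi_1$-theoretic one off the image of $X(K)$ (it does, but that requires a further comparison).
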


\begin{rmk}
We can also extend the adelic pairing $X(\A_k)\times \br X\to \Q/\Z$ to a pairing ${\prod}^\prime \Ho^1(k_v,\mu)\times \br X \to \Q/\Z$. To see this, observe that by the combination of Proposition \ref{pointedsets} and Theorem \ref{kneserharder}, the map ${\prod}^\prime_{v\notin \Omega_\R}X(k_v)\to {\prod}^\prime_{v\notin \Omega_\R}\Ho^1(k_v,\mu)$ is surjective. Hence by applying Proposition \ref{extended pairing}, we see that for any $(y_v)_v\in {\prod}^\prime \Ho^1(k_v,\mu)$ and for all but finitely many $v$ and any $\beta\in \br X$, the quantity $\langle y_v,\beta\rangle$ is zero. This implies that the pairing ${\prod}^\prime \Ho^1(k_v,\mu)\times \br X \to \Q/\Z$ is well defined.
\end{rmk}

\begin{rmk}\label{descibedpairing}
The pairing in Proposition \ref{extended pairing} can be described as follows. We denote by $X_K$ the base change of $X$ to $K$ (for example, $K=k_v$). We consider the image of $\br X $ in $\br X_K$ and define the pairing $X(K)\times \br X_K\rightarrow \br K$. A $K$-point $o\in X_K(K)$ gives rise to an identification $\pi_1(X_K)=\mu(\overline{K})\rtimes \Gamma_{K}$. The Hochschild-Serre spectral sequence attached to $\overline{G_K}\to X_K$ gives $\br X_K\cong \Ho^2(\pi_1(X_K),\overline{K}^\times)$. The general framework of group cohomology implies that there is a one-to-one correspondence between elements in $\Ho^1(K,\mu)$ and continuous sections $\Gamma_{K}\to \pi_1(X_K)$ taken up to conjugation by $\pi_1(\overline{X_K})$. It is induced by sending a cocycle $y\in \Ho^1(K,\mu)$ to the section $s_y:\gamma\mapsto (y(\gamma),\gamma)\in \mu(\overline{K})\rtimes \Gamma_{K}=\pi_1(X_K)$. So an element $y$ of $\Ho^1(K,\mu)$ defines a pullback $s_y^*\colon\Ho^2(\pi_1(X_K),\overline{K}^\times)\to \Ho^2(K,\overline{K}^\times)=\br K$.
\end{rmk}

For the examples that we will construct, we work with the following algebraic group over $\Q$.

\begin{df}\label{SUpq}
Set $L\coloneq \Q[i]$ and set $J_{p,q}$ to be the matrix $\diag(\underbrace{-1,...,-1}_{p\text{ times}},\underbrace{1,...,1}_{q\text{ times}})$. Denote for $p+q=\colon n$ by $\SU_{p,q}$ the group functor $\mathrm{Alg}/\Q\to \mathrm{Grp}$ defined by: 
\begin{equation*}R\mapsto \{m\in \SL_{n}(L\otimes_\Q R)\,|\,\overline{m}^tJ_{p,q}m=J_{p,q}\}.\end{equation*}
\end{df}

\begin{rmk}Since the condition $\overline{m}^tJ_{p,q}m=J_{p,q}$ is a condition given by the vanishing of polynomials over $\Q$, the group functor $\SU_{p,q}$ is an algebraic subgroup of the Weil restriction $R_{L/\Q}(\SL_{n})$.\par
We explain now why $\SU_{p,q}$ becomes isomorphic to $\SL_n$ after a base-change to $L$. There is an involution of the second kind (as in \cite{bookofinvolutions}) $\tau:\Mat_n(L)\to \Mat_n(L)\opp$ given by $m\mapsto J_{p,q}\overline{m}^tJ_{p,q}$. Denote for any $\Q$-algebra $R$ the base-change of $\tau$ to $R$ by $\tau_R$, which is an involution $\tau_R:\Mat_n(L)\otimes_\Q R\to \Mat_n(L)\opp\otimes_\Q R$. By using the canonical isomorphism $\Mat_n(L)\otimes_\Q R\cong \Mat_n(L\otimes_\Q R)$, we see that the functor $\SU_{p,q}$ is isomorphic to the functor: \[R\mapsto \{m\in \Mat_n(L)\otimes_\Q R\,|\,\tau_R(m)\cdot m=\Id,\,\mathrm{Nrd}(m)=1\}.\]
After base-change to $L$, this functor becomes isomorphic to $\SL(\Mat_n(L))$ (see \cite[p. 346]{bookofinvolutions}). So $\SU_{p,q}$ is a semisimple and simply connected algebraic group over $\Q$.
It is well known that as soon as $p,q\neq 0$, the real lie group $\SU_{p,q}(\R)$ is non-compact. Hence $\SU_{p,q}$ has strong approximation away from $\{\infty\}$ by \cite[Theorem~7.21]{platonov1993algebraic}.
\end{rmk}

\begin{df}\label{diagonalT} Let $T$ be the diagonal maximal torus of $\SU_{p,q}$ and let  $T[m]$ be the algebraic subgroup of $m$-torsion points; $T[m](R)=\{\diag(a_1,...,a_{p+q-1},\prod_i a_i^{-1})\,|\,a_i\in \mu_n(R\otimes_\Q \Q[i])\}$. 
\end{df}

We remind the reader of the norm $1$ subgroups of Weil restrictions of commutative algebraic groups. Associated to a finite Galois extension $M/K$ and a commutative algebraic group $H$, the norm $1$ algebraic group $R^1_{M/K}H$ is by definition  $\ker(N_{M/K}:R_{M/K}(H)\to H)$. 
\par The maximal torus $T$ and its torsion comes with the following properties.

\begin{lem}\label{diagonalTproperties}
The following statements about $T$ hold.
\begin{itemize}
    \item As algebraic groups over $\Q$, $T$ is isomorphic to $(R^1_{L/\Q}\G_m)^2$ and $T[m]$ is isomorphic to $(R^1_{L/\Q}(\mu_m))^2$. The group scheme $T[m]$ is constant for $m=4$. After base-change to $\R$, the group scheme $T[m]_\R$ is constant for any $m$.
    \item For any field $M$, the Galois cohomology of $T$ is given by the formula: \[\Ho^1(M,T)\cong \left(M^\times/\mathrm{Im}[N_{M(i)/M}:M(i)^{\times}\to M^\times]\right)^2\]
    \end{itemize}
    \begin{proof}
For the first part, there is a map $(R^1_{L/\Q}\G_m)^2\to T$ defined by $(a,b)\mapsto \diag(a,b,\frac{1}{ab})$ on points. One checks easily that the determinant condition on $T\subset \SU_{p,q}$ implies that the map is an isomorphism. The restriction of the above map to $(R^1_{L/\Q}(\mu_m))^2$ yields an isomorphism onto $T[m]$. The group scheme $T[4]\cong (R^1_{L/\Q}(\mu_4))^2$ has $16$ points over $\overline{\Q}$. But $R^1_{L/\Q}(\mu_4)$ already has $4$ points $\{\pm 1,\pm i\}$ over $\Q$, so $T[4](\Q)$ contains all geometric points and hence $T[4]$ is constant. A similar argument shows that after base-change to $\R$, $T[m]$ is constant since we have the isomorphism $R^1_{\C/\R}(\mu_m)(\R)\cong \Z/m$.
\par For the second part, it suffices to show that $\Ho^1(M,R^{1}_{L/\Q}\G_m)=M^\times/N_{M(i)/M}(M(i)^\times)$. This comes from the exact sequence of tori 
\[1\rightarrow R^1
_{M(i)/M}({\Gm}_{,M(i)}) \rightarrow R_{M(i)/M}({\Gm}_{,M(i)}) \rightarrow {\Gm}_{,M} \rightarrow 1\] (see \cite[(7.8)]{colliot2021brauer}). 
    \end{proof}
\end{lem}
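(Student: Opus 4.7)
The plan is to prove the two bullets in turn: the first is a direct structural computation on the diagonal subgroup of $\SU_{p,q}$, and the second extracts the Galois cohomology from the defining exact sequence of the norm-one torus. Neither part should pose a genuine obstacle; the main care needed is in treating the cases $i\in M$ and $i\notin M$ uniformly when computing $\Ho^1$.

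For the first bullet, I would construct the explicit functorial morphism
\[\varphi\colon (R^1_{L/\Q}\G_m)^2 \longrightarrow T,\qquad (a,b)\mapsto \diag(a,b,(ab)^{-1}),\]
and check on $R$-points that it is a bijection: the Hermitian condition $\overline{m}^tJ_{p,q}m=J_{p,q}$ forces each diagonal entry $a_i$ to satisfy $\bar a_i a_i=1$, and the determinant condition gives $\prod_i a_i = 1$, so every diagonal element of $\SU_{p,q}(R)$ has the form prescribed by $\varphi$. Restricting $\varphi$ to $m$-torsion gives the isomorphism $T[m]\cong (R^1_{L/\Q}\mu_m)^2$.

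For the constancy assertions I would argue by counting. Over $\Q$, the set $R^1_{L/\Q}\mu_4(\Q)=\{a\in\mu_4(L): a\bar a=1\}$ contains all four elements of $\mu_4(L)=\{\pm 1,\pm i\}$ (for instance, $i\cdot(-i)=1$), so $|T[4](\Q)|=16=|T[4](\overline{\Q})|$, whence $T[4]$ is constant. Over $\R$ the same argument gives $R^1_{\C/\R}\mu_m(\R)=\mu_m(\C)\cap S^1=\mu_m(\C)$ because every root of unity has absolute value $1$; hence $|T[m](\R)|=m^2=|T[m](\overline{\R})|$ for every $m$, and $T[m]_\R$ is constant.

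For the second bullet, I would use the short exact sequence of tori
\[1\longrightarrow R^1_{L/\Q}\G_m \longrightarrow R_{L/\Q}\G_m \xrightarrow{\ N_{L/\Q}\ } \G_m \longrightarrow 1\]
and take its long exact sequence of Galois cohomology over $M$. Since $\Ho^1(M,R_{L/\Q}\G_m)=\Ho^1_{\et}(\Spec(M\otimes_\Q L),\G_m)$ vanishes by Hilbert~$90$ (applied factor by factor when $M\otimes_\Q L$ splits as $M\times M$), the connecting map yields
\[\Ho^1(M,R^1_{L/\Q}\G_m) \cong M^\times/N_{M(i)/M}(M(i)^\times),\]
where $M(i)$ denotes the étale $M$-algebra $M\otimes_\Q L$. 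Squaring via the product decomposition $T\cong (R^1_{L/\Q}\G_m)^2$ then gives the claimed formula. The case $i\in M$ is not a real obstacle: there the "norm" becomes multiplication $M\times M\to M$ which is surjective, in agreement with the fact that $R^1_{L/\Q}\G_m$ splits as $\G_m$ after base change to $M$ and hence has vanishing $\Ho^1$.
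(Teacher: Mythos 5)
Your proof is correct and follows essentially the same route as the paper's: the explicit isomorphism $(a,b)\mapsto\diag(a,b,(ab)^{-1})$, the point-count for constancy of $T[4]$ over $\Q$ and of $T[m]$ over $\R$, and the long exact sequence of the norm-one torus sequence combined with Hilbert 90 for the Weil restriction. Your explicit treatment of the split case $i\in M$ (where $M\otimes_\Q L\cong M\times M$) is a small but welcome addition that the paper leaves implicit in the notation $M(i)$.
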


\begin{rmk}\label{H1oftorsion}
As a bonus for $T[m]_\R$ being constant, we obtain: \[\Ho^1(\R,T[m])=\Hom(\Z/2,T[m](\C))\cong \begin{cases} \{\diag(a_1,...,a_{n-1},\prod_{i=1}^{n-1}a_i)\,|\,a_i\in \pm 1\}\text{ if }m\text{ is even}\\
0\text{ else.}\end{cases}\]
\end{rmk}

To understand the map $\Ho^1(\R,T[m])\to \Ho^1(\R,\SU_{p,q})$ we have the following interpretation of $\Ho^1(\R,\SU_{p,q})$.  

\begin{lem}\label{realhermitianform}
Let $h\colon\C^n\times \C^n\to \C$ be the hermitian form defined by $J_{p,q}$. Denote by $\xi\colon\C\to \C$ the complex conjugation. There is a bijection: \begin{equation*}\Ho^1(\R,\SU_{p,q})\leftrightarrow \{\R\text{-isomorphism classes of nonsingular hermitian forms }h'\text{ with }\disc(h)=\disc(h')\}~.\end{equation*}
The distinguished element in $\Ho^1(\R,\SU_{p,q})$ corresponds to the class with signature $(p,q)$. If a cocycle is given by $\xi\mapsto m$ for $m\in \SU_{p,q}(\R)$, the resulting class of hermitian forms is represented by $J_{p,q}\cdot m$.
\end{lem}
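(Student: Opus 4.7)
The plan is to apply Galois descent, using the realization of $\SU_{p,q}$ as the subgroup of the unitary group $\mathbf{U}_{p,q}$ stabilizing $h$ together with the volume form, and the exact sequence
\[1 \to \SU_{p,q} \to \mathbf{U}_{p,q} \xrightarrow{\det} R^1_{\C/\R}(\Gm) \to 1\]
of algebraic $\R$-groups. Before exploiting this, I would invoke the classical fact that $\Ho^1(\R, \mathbf{U}_{p,q})$ classifies $\R$-isomorphism classes of nondegenerate hermitian forms of rank $n = p+q$: the group $\mathbf{U}_{p,q}$ is by construction the automorphism group scheme of $h$, and every nondegenerate hermitian form of rank $n$ becomes isomorphic to $h$ after base change to $\C$, so the set of twists of $h$ is precisely the set of such forms.

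Next I would write down the long exact cohomology sequence
\[\mathbf{U}_{p,q}(\R) \xrightarrow{\det} R^1_{\C/\R}(\Gm)(\R) \to \Ho^1(\R, \SU_{p,q}) \to \Ho^1(\R, \mathbf{U}_{p,q}) \to \Ho^1(\R, R^1_{\C/\R}(\Gm))\]
and observe that the first arrow $\mathbf{U}_{p,q}(\R) \to U(1)$ is surjective, witnessed for any $z \in U(1)$ by the matrix $\diag(z, 1, \ldots, 1)$. Hence $\Ho^1(\R, \SU_{p,q})$ injects into $\Ho^1(\R, \mathbf{U}_{p,q})$. By Lemma \ref{diagonalTproperties} applied to the norm-one torus, $\Ho^1(\R, R^1_{\C/\R}(\Gm)) = \R^\times / N_{\C/\R}(\C^\times) \cong \Z/2\Z$, and the final map in the sequence is the discriminant class. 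This identifies the image of $\Ho^1(\R, \SU_{p,q})$ with the set of classes of hermitian forms whose discriminant agrees with $\disc(h)$, and the distinguished element corresponds to $h$ itself, i.e., to the form of signature $(p, q)$.

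For the explicit cocycle formula $c_\xi = m \mapsto J_{p,q} m$, I would unwind the Galois descent dictionary. Under the base change $\SU_{p,q} \times_\R \C \cong \SL_n$, the Galois involution $\xi$ acts on $\SL_n(\C)$ by $m \mapsto J_{p,q} \overline{m}^{-t} J_{p,q}$, so that the cocycle condition $m \cdot \xi(m) = 1$ for $m \in \SU_{p,q}(\R) \subset \SU_{p,q}(\C)$ reduces to $m^2 = 1$. Since $\mathbf{U}_{p,q}$ is the stabilizer of $J_{p,q}$ under the action $g \cdot H = \overline{g}^t H g$ on hermitian matrices, the standard twisting formula sends the base matrix $J_{p,q}$ to $J_{p,q} m$, which a direct calculation shows is hermitian precisely when $m^2 = 1$, matching the cocycle condition. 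The main obstacle will be this bookkeeping step: one must reconcile the two dictionaries $\SU_{p,q}(\C) = \SL_n(\C)$ and $\{\mathbf{U}_{p,q}\text{-torsors}\} \leftrightarrow \{\text{hermitian forms of rank } n\}$, keeping track of conventions for the Galois action on the complexified unitary group. Once this is done, the explicit formula $J_{p,q} m$ drops out of the standard descent recipe.
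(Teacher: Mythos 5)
Your route is genuinely different from the paper's. The paper gives a self-contained, hands-on Galois descent: it constructs the map in both directions explicitly (from a form $h'$, choose $S$ with $S^*(J_{p,q}\otimes 1)(S^{-1})^*=M_{h'}$ over $\C\otimes_\R\C$ and set $\sigma_{h'}(\xi)=S^{-1}\xi(S)$; conversely, from a cocycle $\xi\mapsto M$, use $\Ho^1(K,\SL_m)=0$ to write $M=T^{-1}\xi(T)$ and descend $T^*(J_{p,q}\otimes 1)T$), and it obtains the formula $J_{p,q}\cdot m$ by an explicit choice of $T$ in the example closing the appendix. You instead quote the classification $\Ho^1(\R,\mathbf{U}_{p,q})\leftrightarrow\{\text{rank-}n\text{ hermitian forms}\}$ and run the long exact sequence of $1\to\SU_{p,q}\to\mathbf{U}_{p,q}\xrightarrow{\det} R^1_{\C/\R}\Gm\to 1$. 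This is cleaner and more conceptual (it is essentially the \cite[29.19]{bookofinvolutions} argument the paper cites), at the cost of importing the $\mathbf{U}$-classification as a black box, whereas the paper's version is elementary and also produces the explicit cocycle formula as a byproduct of the same computation. Your verification that $J_{p,q}m$ is hermitian iff $m^2=1$ is correct and matches the paper's example.

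There is one genuine gap in your exact-sequence step. From the surjectivity of $\det\colon\mathbf{U}_{p,q}(\R)\to U(1)$ you conclude that "$\Ho^1(\R,\SU_{p,q})$ injects into $\Ho^1(\R,\mathbf{U}_{p,q})$", but exactness of the pointed-set sequence only gives you that the \emph{fiber over the distinguished point} is trivial; for nonabelian $\Ho^1$ this does not imply injectivity. To get injectivity you must twist: the fiber through a class $[a]$ is identified with the fiber over the base point of the sequence for the twisted groups, so you need surjectivity of $({}_a\mathbf{U}_{p,q})(\R)\to U(1)$ for every cocycle $a$. This does hold here --- every inner twist ${}_a\mathbf{U}_{p,q}$ is again the unitary group $\mathbf{U}_{p',q'}$ of a real hermitian form, and $\diag(z,1,\dots,1)$ again witnesses surjectivity of the determinant on real points --- but the argument is incomplete without saying so. A second, smaller point: you should justify that the map $\Ho^1(\R,\mathbf{U}_{p,q})\to\Ho^1(\R,R^1_{\C/\R}\Gm)\cong\R^\times/N_{\C/\R}(\C^\times)$ really is $h'\mapsto\disc(h')/\disc(h)$; this is a short cocycle computation but it is the step that turns "kernel of the last map" into "forms with $\disc(h')=\disc(h)$", which is exactly the set appearing in the lemma.
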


This result should be well known in the literature (cf. \cite[29.19]{bookofinvolutions} and \cite{allenhermitianforms}), but we included a self-contained proof for the non-expert which can be found in the Appendix. We thank Mikhail Borovoi for helping to explain to us such a description along with the example in the following proposition.
\par 
Note that over $\R$, the signature of a hermitian form $h'$ determines its isomorphism class. In the following example we see that although $\br_{1,S}'X$ may not cut out $\overline{X(k)}^S$, it might still be cut out by $\br'_S X$. By abuse of notation, we write $\infty$ for the set $\{\infty\}$ below.

\begin{prop}\label{egSU/T2}
We have for $G=\SU_{2,1}$ over $
k=\Q$, $\mu=T[2]$ and $X=G/\mu$ the inclusions: \begin{equation*}\overline{X(\Q)}^{\infty}= X(\A_\Q^\infty)^{\br'_{\infty}}\subsetneq X(\A_\Q^\infty)^{\br'_{1,\infty}}.\end{equation*}
\begin{proof}
To prove the strict inclusion $\overline{X(\Q)}^\infty\subsetneq X(\A_\Q^\infty)^{\br'_{1,\infty}}$: We have the equalities \[T[2](\Q)=\{\diag(a_1,a_2,a_1\cdot a_2)\,|\,a_i\in \{\pm 1\}\}=T[2](\overline{\Q}).\] So $T[2]$ is isomorphic to $(\Z/2)^2$. Therefore \cite[Lemme~1.1]{Sansuc1981} tells us that we have $\Sha^1_{\infty}(k,T[2])=~0$. Therefore Theorem~\ref{main3} gives that the inclusion is an equality if and only if we have that $\delta_\R(X(\R))\subseteq \Ho^1(\R,T[2])$ is a subgroup. We have $\Ho^1(\R,T[2])=\{\diag(a_1,a_2,a_1\cdot a_2)\,|\,a_i\in \{\pm 1\}\}$ and the map to $\Ho^1(\R,\SU_{2,1})$ is by the above lemma given by \[T[2]\ni m\mapsto [J_{2,1}\cdot m]\in \Ho^1(\R,\SU_{2,1}).\] We have that $\delta_\R(X(\R))\subseteq \Ho^1(\R,T[2])$ corresponds to the elements of $T[2]$ which map to hermitian forms with signature $(p,q)$. These are $\{(1,1,1),(-1,1,-1), (1,-1,-1)\}$, which do not form a subgroup of $\Ho^1(\R,T[2])$.
\par So we conclude that $X(\Q)^{\infty}\subsetneq (\A_\Q^{\infty})^{\br_{1,\infty}'}.$\\

In fact, the only cocycle that does not come from $\delta_\R(X(\R))$ is represented by $\diag(-1,-1,1)$. Denote by $\iota\colon\mu_2\to \overline{k}^\times$ the canonical inclusion. Since $\Gamma_k$ acts trivially on $T[2]$, we obtain that $\pi_1(X)$ (resp. $\pi_1(X_\R)$) is identified with $T[2]\times \Gamma_k$ (resp. $T[2]\times \Gamma_\R)$ (for this identification we use the section $o_*:\Gamma_k\to \pi_1(X)$ (resp. $o_*:\Gamma_\R\to \pi_1(X_\R)$)). Hence projection on the first factor gives us a retraction $\lambda_o:\pi_1(X_\R)\to T[2]$ (resp. $\pi_1(X)\to T[2]$). We consider the homomorphism $\iota_*\lambda_o^*\colon\Ho^2(T[2],\mu_2)\to \Ho^2(\pi_1(X_\R),\C^{\times})$, which factors via $\Ho^2(\pi_1(X),\overline{k}^\times)$. We also consider a section $\sigma:\Gamma_\R\to \pi_1(X_\R)$ Since the pushforwards and pullbacks below commute (this is easily seen from how pullbacks and pushforwards act on cocycles), there is a commuting diagram:

\begin{center}
\begin{tikzcd}
                                               & {\Ho^2(T[2],\mu_2)} \arrow[d, "\iota_*\lambda_o^*"'] \arrow[rd] \arrow[ld, "(\lambda_o\circ s)^*"'] &                                                 \\
{\Ho^2(\R,\mu_2)} \arrow[rd, "\iota_*"] & {\Ho^2(\pi_1(X_\R),\C^{\times})} \arrow[d, "\sigma^*"]                             & {\Ho^2(\pi_1(X),\overline{k}^\times)} \arrow[l] \\
                                               & {\Ho^2(\R,\C^\times).}                                                            &                                                
\end{tikzcd}
\end{center}

Note that the map $\iota_*$ in the bottom of the diagram is an isomorphism.
We have $T[2]\cong (\Z/2)^2$ and thus we get $\Ho^2(T[2],\mu_2)\cong \Ext_c((\Z/2)^2,\Z/2)$. We pick an isoomorphism $T[2]\to (\Z/2)^2$ such that $\diag(-1,-1,1)$ is sent to $(1,0)$ and from now on we use this identification. Our cocycle of interest is in this case \[f\colon\Gamma_\R\to (\Z/2)^2\quad \xi\mapsto (1,0).\] Consider the group $D_8=\langle r,s\,|\,r^4,s^2,rsrs \rangle$. Consider the element of $\Ext_c((\Z/2)^2,\Z/2)$, which is $\alpha=[1\to \Z/2\to D_8\to (\Z/2)^2\to 1]$, where $D_{8}\to (\Z/2)^2$ is the quotient map modulo $\langle r^2\rangle$ and $(1,0)=\overline{r}$ and $(0,1)=\overline{s}$. A short case-by-case calculation shows that pulling back $\alpha$ by any $g\in \Ho^1(\R,T[2])$ that is distinct from $f$ gives a split extension, while pulling back via $f$ yields a non-split extension (namely $\Z/4$).\par
Now consider the image of $\alpha$ in $\Ho^2(\pi_1(X),\overline{k}^\times)=\br{X}$ (see \cite[Proposition 3.2]{arteche2018unramified}). It lies in $\br'_{\infty}(X)$ since it pairs trivially with $\delta_\R(X(\R))$ by the end of the previous paragraph. For a point $(x_v)_{v\neq\infty}\in X(\A_\Q^\infty)^{\br'_{\infty}}$ we have that the image $(y_v)_{v\neq\infty}\in {\prod}^\prime_{v\neq\infty} \Ho^1(\Q_v,T[2])$ comes from a unique $y\in \Ho^1(\Q,T[2])$ by the vanishing of $\Sha^1_\infty(\Q,T[2])$. Since all local invariants sum up to~$0$, we have $\langle \alpha, (x_v)_{v\neq\infty}\rangle = \langle r_\R(\alpha),r_\R(y)\rangle$ which vanishes if and only if $r_\R(y)\neq \diag(-1,-1,1)$ by the end of the previous paragraph. So $(x_v)_{v\neq\infty}$ lies in $\overline{X(\Q)}^{\infty}$ by Lemma \ref{S-closure}.
\end{proof}
\end{prop}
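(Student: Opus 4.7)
The plan divides into two parts: establishing the strict inclusion $\overline{X(\Q)}^{\infty}\subsetneq X(\A_\Q^\infty)^{\br^\prime_{1,\infty}}$, and then the equality $\overline{X(\Q)}^{\infty}=X(\A_\Q^\infty)^{\br^\prime_{\infty}}$.

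For the strict inclusion, I plan to apply Theorem~\ref{main3}(b) to $S=\{\infty\}$. By Lemma~\ref{diagonalTproperties}, $T[2]$ is the constant group scheme $(\Z/2)^2$ over $\Q$, so the vanishing result \cite[Lemme~1.1]{Sansuc1981} gives $\Sha^1_\infty(\Q, T[2])=0$, and the criterion~(\ref{brauer1'}) reduces to asking whether $\delta_\R(X(\R))$ is a subgroup of $\Ho^1(\R, T[2])$. Using Lemma~\ref{realhermitianform}, the image of $m\in \Ho^1(\R,T[2])$ in $\Ho^1(\R,\SU_{2,1})$ is represented by the hermitian form with matrix $J_{2,1}\cdot m$, and $\delta_\R(X(\R))$ is the preimage of the distinguished class of signature $(2,1)$. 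Enumerating the four elements of $\Ho^1(\R,T[2])$, I would compute the signatures of $J_{2,1}\cdot m$ and expect to find three elements yielding signature $(2,1)$, with the fourth $m_0:=\diag(-1,-1,1)$ giving the definite form of signature $(0,3)$. This three-element subset is not closed under addition, so (\ref{brauer1'}) fails and Theorem~\ref{main3}(b) yields the strict inclusion.

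For the equality, the containment $\overline{X(\Q)}^\infty\subseteq X(\A_\Q^\infty)^{\br^\prime_\infty}$ is automatic; the strategy for the reverse is to produce a single transcendental class $\alpha\in \br^\prime_\infty X$ whose archimedean evaluation detects precisely the obstructing class $m_0$. Since $T[2]_\R$ is constant, the basepoint $o$ identifies $\pi_1(X_\R)\cong T[2]\times\Gamma_\R$, giving a retraction $\lambda_o\colon \pi_1(X_\R)\to T[2]$ (and analogously over $\Q$). Composing with $\iota\colon\mu_2\hookrightarrow \overline{\Q}^\times$ and using $\br X\cong \Ho^2(\pi_1(X),\overline{\Q}^\times)$, I obtain a homomorphism $\iota_*\lambda_o^*\colon\Ho^2(T[2],\mu_2)\to \br X$. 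Identifying $T[2]\cong(\Z/2)^2$ so that $m_0\leftrightarrow(1,0)$, I would take $\alpha$ to be the image of the class of the nontrivial central extension $1\to \Z/2\to D_8\to(\Z/2)^2\to 1$. By Remark~\ref{descibedpairing}, the pairing of $\alpha$ with $y\in\Ho^1(\R,T[2])$ is computed as the pullback of the $D_8$-extension along the homomorphism $\Gamma_\R\to(\Z/2)^2$ corresponding to $y$; a case check over the three order-two subgroups of $(\Z/2)^2$ should show that this pullback is non-split (isomorphic to $\Z/4$) exactly when $y=m_0$ and split otherwise. In particular $\alpha$ pairs trivially with every element of $\delta_\R(X(\R))$, so $\alpha\in\br^\prime_\infty X$.

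Finally, for any $(x_v)_{v\neq\infty}\in X(\A_\Q^\infty)^{\br^\prime_\infty}$ with image $(y_v)_{v\neq\infty}$ in $\prod^\prime_{v\neq\infty}\Ho^1(\Q_v,T[2])$, the argument in the proof of Lemma~\ref{orthogonaltoBr} combined with $\Sha^1_\infty(\Q,T[2])=0$ provides a unique global lift $y\in\Ho^1(\Q,T[2])$. Using global reciprocity for $\alpha$, the defining pairing reduces to $-\langle r_\R(y),r_\R(\alpha)\rangle$, which must vanish; this forces $r_\R(y)\neq m_0$ by the previous paragraph, so $r_\R(y)\in\delta_\R(X(\R))$, and Lemma~\ref{S-closure} places $(x_v)_{v\neq\infty}$ in $\overline{X(\Q)}^\infty$. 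The hardest step will be the construction and control of $\alpha$: I must produce a genuinely transcendental Brauer class whose archimedean evaluations vanish on $X(\R)$ while still separating the single obstructing class $m_0$ from the other three elements of $\Ho^1(\R,T[2])$, and verifying the non-split $\Z/4$ pullback requires combining Proposition~\ref{extended pairing} and Remark~\ref{descibedpairing} with the precise combinatorics of the $D_8$-extension restricted to each order-two subgroup of $(\Z/2)^2$.
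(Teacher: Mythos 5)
Your proposal is correct and follows essentially the same route as the paper: Theorem~\ref{main3}(b) plus the hermitian-form computation showing $\delta_\R(X(\R))$ is the three-element non-subgroup for the strict inclusion, and the transcendental class built from the $D_8$-extension via $\iota_*\lambda_o^*$ (split on the three classes in $\delta_\R(X(\R))$, non-split exactly on $\diag(-1,-1,1)$) together with $\Sha^1_\infty(\Q,T[2])=0$, reciprocity, and Lemma~\ref{S-closure} for the equality. The only detail to nail down in execution is the case-by-case check that the pullback of the $D_8$-extension along each of the four homomorphisms $\Gamma_\R\to(\Z/2)^2$ splits except for the one hitting $\overline{r}$, which is exactly the computation the paper performs.
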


\begin{rmk}It is easy to see that the Brauer element $\alpha$ that was constructed above is transcendental since its evaluation map $(\Z/2)^2\cong \Ho^1(\R,T[2])\to \Q/\Z$ is not linear: Three elements go to $0$ and one goes to~$\frac{1}{2}$. However any element in $\br_1 X\cong \Ho^1(k,T[2])\times \br k $ gives a linear map induced by the cup-product in Galois cohomology.
\end{rmk}

\subsection{Example of $\overline{X(k)}^S\subsetneq X(\A_k^S)^{\br^\prime_{S}}$}
It is natural to ask whether $\overline{X(k)}^S=X(\A_k^S)^{\br'_S}$ always occurs. We establish examples to show that this is not always the case. First we give an example in which the stabilizer is a torus.

\begin{prop}\label{extorus}
Let 
 $G=\SU_{p,q}$ over $\Q$ with $p,q\neq 0$ and $p+q\geq 3$. Let $T$ be the diagonal maximal torus of $G$ as defined in Definition \ref{diagonalT} and let $X\coloneq \SU_{p,q}/T$. Then we have the strict inclusion $$\overline{X(\Q)}^{\infty}\subsetneq~X(\A_\Q^{\infty})^{\br'_{\infty}}.$$
\begin{proof}
We first show that $\Sha^1_{\infty}(\Q,T)=0$. Since $T\cong (R^{1}_{L/\Q}\G_m)^2$, it suffices to show that $\Sha^1_{\infty}(\Q,R^{1}_{L/\Q}\G_m)=0$. Using the identification from Lemma \ref{diagonalTproperties}, we take one element in $\Sha^1_{\infty}(\Q,R^1_{L/\Q}\Gm)$ and it corresponds to $a\in \Q^\times$ which is a norm from $\Q_v(i)$ for all places $v\neq\infty$. This means that the quaternion algebra $(-1,a)\in \br \Q[2] $ vanishes in $\br \Q_v $ for all $v\neq\infty$. The exact sequence $$0\to \br \Q \to \bigoplus_v \br \Q_v \xrightarrow{\sum_v \inv_v} \Q/\Z\to 0$$ implies that $(-1,a)$ vanishes in $\br \R$ and hence $(-1,a)=0$. The vanishing of $(-1,a)\in\br \Q[2]$ is equivalent to $a$ being a norm from $\Q(i)$, and thus $\Sha^1_{\infty}(
\Q,R^1_{L/\Q}\Gm)=0$ and $\Sha^1_{\infty}(
\Q,T)=0$.\par
Since $T$ is connected, we can apply \cite[Proposition~6.10]{Sansuc1981} to the torsor $G\rightarrow X$ under $T$ and we get $\br{X}=\Pic T=\br_1 X$. Therefore, by Theorem \ref{main3}, it will be necessary and sufficient to show that $\delta_\R(X(\R))$ is not a subgroup in $\Ho^1(\R,T)$. Since $T(\R)$ is divisible and since $\Ho^1(\R,T)$ is $2$-torsion, the natural map $\Ho^1(\R,T[2])\to \Ho^1(\R,T)$ is an isomorphism. So $\Ho^1(\R,T)$ consists of the cocyles: $\{\xi\mapsto (\diag (a_1,...,a_{n-1}, \prod_i a_i^{-1})\,|\,a_i\in \{\pm 1\}\}$. The image of $\delta_\R(X(\R))$ consists of all cocycles $\xi\to m$, such that $J_{p,q}\cdot m$ has signature $(p,q)$. Assume without loss of generality that $p\geq 2$ (else we have $q\geq 2$ and this case is analogous). The cocycles $\xi\mapsto \diag(-1,1,...,-1)$ and $\xi\mapsto \diag(1,-1,...,-1)$ (where dots mean that all entries there are $1$'s) both lie in $\delta_\R(X(\R))$, however their product does not.\par
So we conclude that we have $\overline{X(\Q)}^{\infty}\subsetneq X(\A_\Q^{\infty})^{\br'_{\infty}}$ in this case.
\end{proof}
\end{prop}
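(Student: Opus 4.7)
The plan is to apply Theorem \ref{main3}(b), which characterizes the equality $\overline{X(k)}^S = X(\A_k^S)^{\br'_{1,S}}$ via condition (**). Since the statement of the proposition concerns $\br'_\infty$ rather than $\br'_{1,\infty}$, I first reduce to the algebraic case by checking $\br X = \br_1 X$. Because $G \to X$ is a torsor under the connected torus $T$ and $G$ is semisimple simply connected (so $\pic G = 0$ and $\br k \xrightarrow{\sim} \br G$), Sansuc's exact sequence (\cite[Proposition~6.10]{Sansuc1981}) applied to this torsor yields $\br X = \br_1 X$, hence $\br'_\infty X = \br'_{1,\infty} X$. Proving the strict inclusion therefore amounts to showing that condition (**) from Theorem \ref{main3}(b) fails.

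Next I would compute $\Sha^1_\infty(\Q, T)$. Using $T \cong (R^1_{L/\Q}\Gm)^2$ from Lemma \ref{diagonalTproperties}, this reduces to showing $\Sha^1_\infty(\Q, R^1_{L/\Q}\Gm) = 0$. Elements of the latter are represented by $a \in \Q^\times$ that are local norms from $\Q_v(i)$ at every finite place $v$; equivalently, the quaternion class $(-1, a) \in \br\Q[2]$ is locally trivial at every finite place. The Albert--Brauer--Hasse--Noether sum-of-invariants relation then forces $(-1,a)$ to vanish at $\infty$ as well, hence globally, so that $a$ is a global norm from $L$. Consequently $\Sha^1_\infty(\Q, T) = 0$ and condition (**) collapses to the question of whether $\delta_\R(X(\R))$ is a subgroup of $\Ho^1(\R, T)$.

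For the final step, since $T(\R) = (S^1)^2$ is divisible and $\Ho^1(\R, T)$ is $2$-torsion, the natural map $\Ho^1(\R, T[2]) \to \Ho^1(\R, T)$ is an isomorphism; by Remark \ref{H1oftorsion} this group consists of diagonal matrices $\diag(a_1, \ldots, a_n)$ with $a_i \in \{\pm 1\}$ and $\prod a_i = 1$. Lemma \ref{realhermitianform} identifies $\delta_\R(X(\R))$ with those $m \in T[2](\R)$ for which $J_{p,q} \cdot m$ still has signature $(p, q)$. Assume without loss of generality $p \geq 2$. Then $m_1 = \diag(-1, 1, \ldots, 1, -1)$ and $m_2 = \diag(1, -1, 1, \ldots, 1, -1)$ each flip exactly one sign inside the first-$p$ block and the last coordinate; a direct count shows $J_{p,q} \cdot m_i$ still has signature $(p, q)$, so both lie in $\delta_\R(X(\R))$. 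Their product $m_1 m_2 = \diag(-1, -1, 1, \ldots, 1)$ however flips two signs inside the first-$p$ block, so $J_{p,q} \cdot m_1 m_2$ has signature $(p-2, q+2)$ and does not lie in $\delta_\R(X(\R))$. Thus $\delta_\R(X(\R))$ is not a subgroup, condition (**) fails, and the strict inclusion follows.

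I expect the main obstacle to be identifying $\delta_\R(X(\R))$ concretely enough to produce the counterexample pair $m_1, m_2$; the hermitian-form description in Lemma \ref{realhermitianform} is the decisive input. The $\Sha^1$-vanishing is comparatively routine once $T$ is decomposed via Lemma \ref{diagonalTproperties}, and the passage from $\br'_{1,\infty}$ to $\br'_\infty$ via Sansuc's result is a standard reduction.
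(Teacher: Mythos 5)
Your proposal is correct and follows essentially the same route as the paper's proof: the Sansuc reduction to $\br X=\br_1 X$, the vanishing of $\Sha^1_\infty(\Q,T)$ via the quaternion class $(-1,a)$ and the sum-of-invariants relation, the identification $\Ho^1(\R,T)\cong\Ho^1(\R,T[2])$, and even the same pair of cocycles $\diag(-1,1,\dots,1,-1)$ and $\diag(1,-1,1,\dots,1,-1)$ whose product leaves $\delta_\R(X(\R))$. The only difference is the order in which the Sansuc step and the $\Sha^1$ computation appear, which is immaterial.
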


\par Before giving another example with finite commutative stabilizer, we need the following lemma on cup products in group cohomology. 

\begin{lem}\label{cupcomp}
Let $G$ and $H$ be groups that act trivially on a commutative ring $R$. Denote the projections of $G\times H$ on the first and second factor by $p_1$ and $p_2$ respectively. The following statements hold:
\begin{itemize}
    \item Given a section $s$ to $p_2$, written $s=(\varphi_s,\Id):H\to G\times H$, we have a commutative diagram:
\begin{equation*}
\begin{tikzcd}
{\Ho^p(G,R)\otimes \Ho^q(H,R)} \arrow[d, "p_1^*\cup p_2^*"] \arrow[r, "\varphi_s^*\otimes \Id"] & {\Ho^p(H,R)\otimes \Ho^q(H,R)} \arrow[d, "\cup"] \\
{\Ho^{p+q}(G\times H,R)} \arrow[r, "\varphi_s^*"]                                                 & {\Ho^{p+q}(H,R)};                                
\end{tikzcd}
\end{equation*}
    \item Given homomorphisms $f_1\colon G'\to G$ and $f_2\colon H'\to H$, there is a commutative diagram:
  \begin{equation*}
\begin{tikzcd}
{\Ho^p(G',R)\otimes \Ho^q(H',R)} \arrow[d, "p_1'^*\cup p_2'^*"] \arrow[r, "f_1^*\otimes f_2^*"] & {\Ho^{p}(G,R)\otimes \Ho^q(H,R)} \arrow[d, "p_1^*\cup p_2^*"] \\
{\Ho^{p+q}(G'\times H',R)} \arrow[r, "(f_1\times f_2)^*"]                                       & {\Ho^{p+q}(G\times H,R)}.               \end{tikzcd}
\end{equation*}
\end{itemize}
\end{lem}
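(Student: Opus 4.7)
The plan is to reduce both statements to the single fact that for any homomorphism of groups $\phi \colon K \to L$ and any classes $x \in \Ho^p(L,R)$, $y \in \Ho^q(L,R)$ one has $\phi^*(x\cup y)=\phi^*x\cup \phi^*y$, together with the functoriality $(\psi\circ\phi)^*=\phi^*\circ\psi^*$. Both facts can be found in any standard reference on group cohomology (e.g.\ by computing on the bar resolution, or because $H^*(-,R)$ is a contravariant lax monoidal functor on groups). Once this is in place, each diagram collapses to an observation about the compatibility of the relevant projections with the given maps.

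For the first part, I would start from the fact that the section $s=(\varphi_s,\Id)\colon H\to G\times H$ satisfies $p_1\circ s=\varphi_s$ and $p_2\circ s=\Id_H$. Hence for $a\in \Ho^p(G,R)$ and $b\in \Ho^q(H,R)$, naturality and functoriality give
\[
s^*\bigl(p_1^*a\cup p_2^*b\bigr)
= (p_1\circ s)^*a\ \cup\ (p_2\circ s)^*b
= \varphi_s^*a\ \cup\ b,
\]
which is exactly the commutativity of the first diagram (where I am interpreting the bottom arrow labelled $\varphi_s^*$ as the pullback $s^*$, using that $s$ is determined by $\varphi_s$).

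For the second part I would apply the same machine to the identities $p_1\circ(f_1\times f_2)=f_1\circ p_1'$ and $p_2\circ(f_1\times f_2)=f_2\circ p_2'$. For $a\in \Ho^p(G,R)$ and $b\in \Ho^q(H,R)$ we then get
\[
(f_1\times f_2)^*\bigl(p_1^*a\cup p_2^*b\bigr)
=(f_1\circ p_1')^*a\ \cup\ (f_2\circ p_2')^*b
=p_1'^*(f_1^*a)\ \cup\ p_2'^*(f_2^*b),
\]
which is exactly what the second diagram asserts.

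Neither step presents a genuine obstacle: the entire lemma is a formal consequence of the naturality of the cup product under group homomorphisms. The only point that requires any care is being explicit about what the two arrows $\varphi_s^*$ and $(f_1\times f_2)^*$ mean at the level of cocycles, and I would simply note at the start of the proof that we are using the standard cup-product on the bar complex, for which these naturality properties are immediate. No compatibility of $R$-module structures or of actions needs to be verified since $R$ carries the trivial action throughout.
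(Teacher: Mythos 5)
Your proof is correct and follows essentially the same route as the paper: both arguments reduce the two diagrams to the naturality of the cup product under pullback along group homomorphisms together with the identities $p_1\circ s=\varphi_s$, $p_2\circ s=\Id$ and $p_i\circ(f_1\times f_2)=f_i\circ p_i'$, and your reading of the bottom arrow $\varphi_s^*$ as $s^*$ matches the paper's intent. The only difference is presentational: the paper verifies the naturality explicitly on bar-complex cocycles, whereas you cite it as a standard fact, which is perfectly acceptable.
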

\begin{proof}
We will prove the statements by doing calculations with cocycles, since it suffices to show that the diagram commutes on the level of cocycles.
Denote by $[-]$ the cohomology class of a cocycle.
To prove that the first diagram commutes, take cocycles $\alpha\colon G^p\to R$ and $\beta\colon H^q\to R$.
We have $\varphi_s^*(\alpha)=\alpha\circ (\varphi_s)^p\colon H^p\to R$.
Then we get that $[\alpha\circ \varphi_s]\cup [\beta]$ is represented by the cocycle $$H^{p+q}\to R\quad (h_1,...,h_p,...,h_{p+q})\mapsto \alpha(\varphi_s(h_1),...,\varphi_s(h_p))\cdot \beta(h_{p+1},...,h_{p+q}).$$ On the other hand, applying $p_1^*\cup p_2^*$ to $[\alpha]\otimes [\beta]$ gives a cohomology class that is represented by the cocycle\\ $$(G\times H)^{p+q}\to R\quad ((g_1,h_1),...,(g_{p},h_p),...,(g_{p+q},h_{p+q}))\mapsto \alpha(g_1,...,g_p)\cdot \beta(h_{p+q},...,h_{p+q}).$$ Pulling this cocycle back via $s^{p+q}\colon H^p\to G^{p+q}\times H^{p+q}$ gives us that the first diagram commutes.\\
One proves that the second diagram commutes in an analogous way by computing that for $\alpha\otimes \beta$ a tensor product of two cocycles, one ends up with the cocycle: $$(G\times H)^{p+q}\to R\quad ((g_1,h_1),...,(g_p,h_p),...,(g_{p+q},h_{p+q}))\mapsto \alpha(f_1(g_1),...,f_1(g_p))\cdot \beta(f_2(g_{p+1}),...,f_2(g_{p+q}))$$
\end{proof}

Now we give an example in which the stabilizer is finite and in which we have the strict inclusion
$\overline{X(\Q)}^{\infty}\subsetneq X(\A_\Q^\infty)^{\br'_{\infty}}$. Note the contrast between this and the example in Proposition~\ref{egSU/T2}.

\begin{prop}\label{egSU/T4}
Set $\mu=T[4]$ to be the subgroup of $\SU_{2,1}$ as defined in Definition \ref{diagonalT}. Define the $\Q$-variety $X:=\SU_{2,1}/\mu$. We have the strict inclusion:
\begin{equation*}\overline{X(\Q)}^{\infty}\subsetneq X(\A_\Q^\infty)^{\br'_{\infty}}.\end{equation*}
\end{prop}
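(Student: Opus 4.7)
The strategy parallels Proposition~\ref{egSU/T2}: we produce an $S$-adelic point $(x_v)_{v\neq\infty}$ whose image in $\prod'_{v\neq\infty}\Ho^1(\Q_v,T[4])$ lifts to some $y\in\Ho^1(\Q,T[4])$ with $r_\R(y)=(1,1)\notin\delta_\R(X(\R))$. However, for $T[4]\cong(\Z/4)^2$, no transcendental Brauer element will detect the gap, so $(x_v)$ will belong to $X(\A_\Q^\infty)^{\br^\prime_\infty}\setminus\overline{X(\Q)}^\infty$, giving a strict inclusion opposite to the conclusion of Proposition~\ref{egSU/T2}.

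First, set up the cohomology as in Proposition~\ref{egSU/T2}: by Lemma~\ref{diagonalTproperties} and Remark~\ref{H1oftorsion}, $\Ho^1(\R,T[4])\cong T[4](\R)[2]\cong(\Z/2)^2$, parametrized by $\diag((-1)^a,(-1)^b,(-1)^{a+b})$, and Lemma~\ref{realhermitianform} gives $\delta_\R(X(\R))=\{(0,0),(1,0),(0,1)\}$, missing only $(1,1)$. Since $T[4]$ is the constant group scheme $(\Z/4)^2$, Chebotarev forces $\Sha^1_\infty(\Q,T[4])=0$: any nonzero character $\Gamma_\Q\to\Z/4$ is nontrivial on some finite-place Frobenius. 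Using Lemma~\ref{surjective} pick $y\in\Ho^1(\Q,T[4])$ with $r_\R(y)=(1,1)$, and use Proposition~\ref{pointedsets} together with Theorem~\ref{kneserharder} to lift $(r_v(y))_{v\neq\infty}$ to an $S$-adelic point $(x_v)\in X(\A_\Q^\infty)$. Lemma~\ref{S-closure} then gives $(x_v)\notin\overline{X(\Q)}^\infty$.

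To prove $(x_v)\in X(\A_\Q^\infty)^{\br^\prime_\infty}$, fix any $\beta\in\br^\prime_\infty X$. By the extended pairing (Proposition~\ref{extended pairing}) and global reciprocity,
\[
\sum_{v\neq\infty}\inv_v\ev_\beta(x_v)=-\inv_\infty\ev_\beta((1,1)),
\]
so the task reduces to $\ev_\beta((1,1))=0$ in $\br\R$. Decompose $\ev_\beta\colon\Ho^1(\R,T[4])\to\br\R$ along the filtration $\br\Q\subset\br_1 X\subset\br X$: the $\br\Q$-contribution is a constant which is forced to be zero by $\ev_\beta((0,0))=0$ (the trivial cocycle lifting to the base point $o\in X(\R)$); the $\br_1 X/\br\Q$-contribution is the Poitou-Tate cup product with a class in $\Ho^1(\Q,\hat{T[4]})$, hence a group homomorphism on $\Ho^1(\R,T[4])$.

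The main obstacle is to control the transcendental contribution from $\br X/\br_1 X$. Since $G$ is simply connected and $T[4]$ is constant, $\pi_1(X)\cong(\Z/4)^2\times\Gamma_\Q$, so by~\cite[Proposition~3.2]{arteche2018unramified} the transcendental piece is a subquotient of $\Ho^2((\Z/4)^2,\overline{\Q}^\times)^{\Gamma_\Q}$. The Künneth formula gives $\Ho^2((\Z/4)^2,\overline{\Q}^\times)\cong\mu_4$, generated by the Heisenberg-type cup-product class with $2$-cocycle $c((a,b),(a',b'))=\zeta^{ab'}$ for $\zeta\in\mu_4$. By Remark~\ref{descibedpairing}, the pairing with $y\in\Ho^1(\R,T[4])$ is the pullback along $y\colon\Gamma_\R\to T[4](\C)$, which factors through $\Im(y)\subseteq T[4](\C)[2]=(\Z/2)^2\hookrightarrow(\Z/4)^2$. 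The restriction of the Heisenberg class to $(\Z/2)^2$ is trivial: for any $\zeta\in\mu_4$ the commutators $[\tilde x^2,\tilde y^2]=\zeta^{2\cdot 2}=1$, so the lifts of $(\Z/2)^2$ form a subgroup and the extension splits. Hence the transcendental contribution to $\ev_\beta$ vanishes identically on $\Ho^1(\R,T[4])$, and $\ev_\beta$ is a group homomorphism there. Since $\beta\in\br^\prime_\infty X$ forces $\ev_\beta((1,0))=\ev_\beta((0,1))=0$, linearity yields $\ev_\beta((1,1))=0$, completing the proof.
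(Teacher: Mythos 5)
Your skeleton is the right one and coincides with the paper's: produce $y\in\Ho^1(\Q,T[4])$ with $r_\R(y)=(1,1)$, the unique class outside $\delta_\R(X(\R))$, lift $(r_v(y))_{v\neq\infty}$ to an adelic point, use $\Sha^1_\infty(\Q,T[4])=0$ and Lemma \ref{S-closure} to see the point is not in $\overline{X(\Q)}^\infty$, and reduce membership in $X(\A_\Q^\infty)^{\br'_\infty}$ to the claim that every $\beta\in\br'_\infty X$ has $\ev_\beta\colon\Ho^1(\R,T[4])\to\br\R$ linear (this is Lemma \ref{reducttolinBrXR}). Your final cocycle computation --- that the generator of $\Ho^2((\Z/4)^2,\C^\times)\cong\Z/4$ restricts trivially to the $2$-torsion subgroup through which any $y\colon\Gamma_\R\to(\Z/4)^2$ factors --- is correct and is essentially the last step of Lemma \ref{pairingisleftlinear}.

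The genuine gap is the middle step, where you ``decompose $\ev_\beta$ along the filtration $\br\Q\subset\br_1X\subset\br X$'' and assert that the transcendental contribution to $\ev_\beta$ is $y\mapsto y^*(\overline{\beta})$ for $\overline{\beta}$ the image of $\beta$ in $\Ho^2((\Z/4)^2,\overline{\Q}^\times)$. A filtration does not give a decomposition of the function $\ev_\beta$: to conclude that $\ev_\beta$ differs from an affine-linear map by $y\mapsto y^*(\overline{\beta})$ you must exhibit $\beta_0\in\br X_\R\cong\Ho^2(\pi_1(X_\R),\C^\times)$ with $\beta-\beta_0\in\br_1X_\R$ and $s_y^*\beta_0=y^*(\overline{\beta})$ for all $y$, i.e.\ a class of the form $p_1^*\alpha$. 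But inflation along $p_1\colon\pi_1(X_\R)=(\Z/4)^2\times\Gamma_\R\to(\Z/4)^2$ is not defined with $\C^\times$-coefficients, because $\Gamma_\R=\ker(p_1)$ acts on $\C^\times$ by conjugation; for the same reason the K\"unneth decomposition you are implicitly using is not available for $\Ho^2(\pi_1(X_\R),\C^\times)$. Filling this is precisely the content of Lemmas \ref{suffleftlinear} and \ref{pairingisleftlinear}: one first shows $\Ho^2(\pi_1(X_\R),\C^\times)$ is $2$-torsion (via the Magma computation $\Ho^2(\pi_1(X_\R),\mu_4)\cong(\Z/2)^6$ of Appendix \ref{magma}), so that every class lifts to $\Ho^2(\pi_1(X_\R),\Z/2)$, where the action is trivial and K\"unneth gives $p_2^*\Ho^2(\Gamma_\R,\Z/2)\oplus\left[\Ho^1(\mu(\C),\Z/2)\otimes\Ho^1(\R,\Z/2)\right]\oplus p_1^*\Ho^2(\mu(\C),\Z/2)$; the first two summands yield affine-linear evaluation maps, and only then does the problem reduce to the pullback computation you perform. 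Without this reduction, your identification of the non-linear part of $\ev_\beta$ is unsupported, and it is the hardest part of the proof. (A smaller point: your opening remark that ``no transcendental Brauer element will detect the gap'' is the conclusion to be proved, not an input.)
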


To prove this result, we will reduce the problem step by step via the following three lemmas. Throughout these lemmas we keep the notation as in Proposition \ref{egSU/T4}. Let $v$ be a place of~$\Q$. Elements in $\br X$ can be base-changed to elements in $\br X_{v}$, after which they can be paired with elements in $\Ho^1(\Q_v,\mu). $ This induces a pairing $\Ho^1(\Q_v,\mu)\times \br X\to \Q/\Z$ which is compatible with the Brauer-Manin pairing (see Proposition \ref{extended pairing} and Remark \ref{descibedpairing}).

\begin{lem}\label{reducttolinBrXR}
If the pairing $\Ho^1(\R,\mu)\times \br(X_\R)\to \frac{1}{2}\Z/\Z$ is linear on the left, then Proposition~\ref{egSU/T4} holds.
\begin{proof}
The elements in $\Ho^1(\R,\mu)$ correspond to elements in $T[2]$ because $\mu$ is constant over~$\R$. Under this identification, $\Ho^1(\R,\mu)$ consists of elements of the form $\diag(a_1,a_2,\frac{1}{a_1a_2})$ where we have $a_i\in \{\pm 1\}$. By Lemma \ref{realhermitianform}, the image of such an element $\diag(a_1,a_2,\frac{1}{a_1a_2})$ under the connecting homomorphism $\delta:\Ho^1(\R,\mu)\to \Ho^1(\R,\SU_{2,1})$ is represented by the hermitian form $\diag(a_1,a_2,\frac{-1}{a_1a_2})$. So we see that the element $\diag(-1,-1,1)$ maps to a form with signature $(-1,-1,-1)$ while all the other three elements in $\Ho^1(\R,\mu)$ map to forms with signature $(1,1,-1)$, i.e. precisely one element in $\Ho^1(\R,\mu)$ maps to a nonzero element in $\Ho^1(\R,\SU_{2,1})$.\par
We let $y$ be an element in $\Ho^1(\Q,\mu)$ whose image in $\Ho^1(\R,\SU_{2,1})$ under the canonical map is nonzero. This $y$ exists by the previous paragraph combined with $\Ho^1(\Q,\mu)\to \Ho^1(\R,\mu)$ being surjective (see Lemma \ref{surjective}). Consider the diagonal image of $y$ in ${\prod}^\prime_{v\neq \infty}\Ho^1(\Q_v,\mu)$, which lifts to an adelic point $(x_v)_{v\neq \infty}$ by Lemma \ref{kneserharder} combined with Proposition \ref{pointedsets}. For any $\beta\in \br{X}$, we have $\langle  (x_v)_{v\neq\infty}, \beta\rangle=\sum_{v\neq \infty} \langle y|_v,\beta|_v\rangle=\langle y|_{\infty},\beta|_{\infty}\rangle$, where the last equality comes from the fact that we have $\sum_{v\in \Omega_\Q}\langle y|_v,\beta_v\rangle=0$. 
Write $y|_\infty=y_1+y_2$, where $y_1,y_2\in \Ho^1(\R,\mu)$ are both different from $y|_{\infty}$. The linearity assumption in the lemma implies that we have $$\langle y|_{\infty},\beta|_{\infty}\rangle =\langle y_1,\beta|_{\infty}\rangle+\langle y_2,\beta|_{\infty}\rangle.$$ If we assume that $\beta$ is inside $\br'_{\infty} X$, then this sum vanishes, because both $y_1$ and $y_2$ come from points in $X(\R)$ by the previous paragraph. Therefore we have $(x_v)_{v\neq \infty}\in X(\A_\Q^{\infty})^{\br'_{\infty}}$.\par
Since $\mu=T[4]$ is constant over $\Q$ by Lemma \ref{diagonalTproperties}, we have by \cite[Lemme 1.1]{Sansuc1981} the equality $\Sha^1_{\infty}(\Q,\mu)=0$. Hence $y$ is the unique element in $\Ho^1(\Q,\mu)$ that restricts to $(y|_v)_{v\neq \infty}$. Combining this with the fact that the image of $y$ is nonzero in $\Ho^1(\R,\SU_{2,1})$ yields that we have $(x_v)_{v\neq\infty}\notin\overline{X(\Q)}^\infty$ by Lemma~\ref{S-closure}, which proves the lemma.
\end{proof}
\end{lem}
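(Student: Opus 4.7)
The plan is to assume the pairing $\Ho^1(\R,\mu)\times \br X_\R \to \tfrac{1}{2}\Z/\Z$ is linear in the first variable and, using this, exhibit an explicit adelic point that lies in $X(\A_\Q^\infty)^{\br'_\infty}$ but not in $\overline{X(\Q)}^\infty$. The construction will start from a carefully chosen class at the archimedean place: the unique element of $\Ho^1(\R,\mu)$ whose image in $\Ho^1(\R,\SU_{2,1})$ is nontrivial.

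First I would analyze $\Ho^1(\R,\mu)$. Since $\mu=T[4]$ is constant over $\R$ by Lemma \ref{diagonalTproperties}, we have $\Ho^1(\R,\mu)=\Hom(\Gamma_\R,\mu(\C))$, and Remark \ref{H1oftorsion} identifies it with the Klein four-group $\{\diag(a_1,a_2,a_1a_2)\mid a_i\in\{\pm 1\}\}$. By Lemma \ref{realhermitianform}, the connecting map $\delta_\R$ sends such a cocycle $m$ to the class of the hermitian form $J_{2,1}\cdot m=\diag(-a_1,-a_2,a_1a_2)$. A direct check (as at the start of the proof of Proposition \ref{egSU/T2}) shows that exactly one of the four elements, namely $y_\infty:=\diag(-1,-1,1)$, maps to a form whose signature is not $(2,1)$; the other three classes lie in $\delta_\R(X(\R))$. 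By Lemma \ref{surjective} applied to $S=\{\infty\}$, the map $\Ho^1(\Q,\mu)\to\Ho^1(\R,\mu)$ is surjective, so pick $y\in\Ho^1(\Q,\mu)$ with $r_\infty(y)=y_\infty$. For each finite place $v$, $\Ho^1(\Q_v,\SU_{2,1})=0$ by Theorem \ref{kneserharder}, so the sequence of Proposition \ref{pointedsets} yields local points $x_v\in X(\Q_v)$ with $\delta_v(x_v)=r_v(y)$; standard integrality arguments assemble these into a point $(x_v)_{v\neq\infty}\in X(\A_\Q^\infty)$.

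Next I would verify $(x_v)_{v\neq\infty}\in X(\A_\Q^\infty)^{\br'_\infty}$. For any $\beta\in\br'_\infty X$, compatibility of the Brauer--Manin pairing with the extended pairing of Proposition \ref{extended pairing} gives
\[
\sum_{v\neq\infty}\langle r_v(y),\beta\rangle \;=\; -\langle y_\infty,\beta|_\R\rangle
\]
by reciprocity (all local invariants of the global class $y\cup\beta$ sum to zero). In the Klein four-group $\Ho^1(\R,\mu)$, decompose $y_\infty=y_1+y_2$ with $y_1,y_2$ the two other nonzero classes, so that both $y_i\neq y_\infty$; concretely, $\diag(-1,-1,1)=\diag(-1,1,-1)\cdot\diag(1,-1,-1)$. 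By the first step, both $y_1$ and $y_2$ lie in $\delta_\R(X(\R))$, so by the compatibility diagram of Proposition \ref{extended pairing}, $\langle y_i,\beta|_\R\rangle$ equals the evaluation of $\beta$ at some real point, which vanishes because $\beta\in\br'_\infty X$. Applying the assumed linearity on the left, $\langle y_\infty,\beta|_\R\rangle=\langle y_1,\beta|_\R\rangle+\langle y_2,\beta|_\R\rangle=0$, so the adelic pairing vanishes.

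Finally, the non-approximability follows from Lemma \ref{S-closure}: if $(x_v)_{v\neq\infty}$ were in $\overline{X(\Q)}^\infty$, there would exist $y'\in\Ho^1(\Q,\mu)$ with $r_v(y')=r_v(y)$ for all $v\neq\infty$ and $r_\infty(y')$ trivial in $\Ho^1(\R,\SU_{2,1})$. Since $\mu=T[4]$ is constant over $\Q$ (Lemma \ref{diagonalTproperties}), \cite[Lemme~1.1]{Sansuc1981} gives $\Sha^1_\infty(\Q,\mu)=0$, forcing $y'=y$; but $r_\infty(y)=y_\infty$ is by construction nonzero in $\Ho^1(\R,\SU_{2,1})$, a contradiction. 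The only conceptually delicate step is the Klein-four splitting in the third paragraph: the argument works precisely because $\Ho^1(\R,\mu)$ has exactly one element outside $\delta_\R(X(\R))$ and this element is the sum of two classes that do lie in $\delta_\R(X(\R))$, which is where the assumed linearity converts information about $\br'_\infty X$ into vanishing at the archimedean factor.
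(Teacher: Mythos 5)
Your proposal is correct and follows essentially the same route as the paper: identify $\Ho^1(\R,\mu)$ with the Klein four-group via constancy of $\mu$, use Lemma \ref{realhermitianform} to see that $\diag(-1,-1,1)$ is the unique class outside $\delta_\R(X(\R))$, lift a global class $y$ hitting it (Lemma \ref{surjective}, Theorem \ref{kneserharder}, Proposition \ref{pointedsets}), kill the pairing with $\br'_\infty X$ by reciprocity plus the assumed left-linearity applied to the splitting $y_\infty=y_1+y_2$, and rule out approximation via $\Sha^1_\infty(\Q,\mu)=0$ and Lemma \ref{S-closure}. The only cosmetic difference is a sign convention in the hermitian-form formula (you use $J_{2,1}\cdot m$ as in Lemma \ref{realhermitianform}, the paper's proof writes it with the opposite sign), which does not affect the identification of the exceptional class.
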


Recall from Remark \ref{descibedpairing} that for any field extension $K$ of $k$, there is a canonical isomorphism $\Ho^2(\pi_1(X_K),\overline{K}^\times)\cong \br X_K$, which is compatible with the pairing with $\Ho^1(K,\mu)$. So in Lemma~\ref{reducttolinBrXR}, we could have also written $\Ho^2(\pi_1(X_\R),\C^\times)$ instead of $\br X_\R$. The following lemma aims to replace this term by $\Ho^2(\pi_1(X_\R),\Z/2)$. Recall that elements of $\Ho^1(K,\mu)$ may be thought of as equivalence classes of sections $s:\Gamma_K\to \pi_1(X_K)$.

\begin{lem}\label{suffleftlinear}
If the pairing  $\Ho^1(\R,\mu)\times \Ho^2(\pi_1(X_\R),\Z/2)\to \Ho^2(\R,\Z/2),\quad  (s,\alpha) \mapsto s^*\alpha$ is linear on the left, then the pairing $\Ho^1(\R,\mu)\times \br X_\R\to \Q/\Z$ is linear on the left. 
\begin{proof}
We identify $\Z/2$ with $\mu_2\subset \C^\times$. By using the exact sequence $0\to \Z/2\to \C^\times \to \C^\times\to 0$ of $\pi_1(X_\R)$-modules, we obtain a surjection $\Ho^2(\pi_1(X_\R),\Z/2)\to \Ho^2(\pi_1(X_\R),\C^\times)[2]$. Take an element in $\Ho^1(\R,\mu)$, which is represented by a section $s:\Gamma_\R\to \pi_1(X_\R)$. There is a commutative diagram
\begin{equation*}
\begin{tikzcd}
{\Ho^2(\pi_1(X_\R),\Z/2)} \arrow[r] \arrow[d, "s^*"] & {\Ho^2(\pi_1(X_\R),\C^{\times})} \arrow[d, "s^*"] \\
{\Ho^2(\R,\Z/2)} \arrow[r, "\sim"]                               & \br{\R}.                                         
\end{tikzcd}
\end{equation*}
Hence the map $\Ho^2(\pi_1(X_\R),\Z/2)\to \Ho^2(\pi_1(X_\R),\C^{\times})$ is compatible with the pairing with $\Ho^1(\R,\mu)$. Since $\pi_1(X_\R)$ is $4$-torsion, we have a surjection $\Ho^2(\pi_1(X_\R),\mu_4)\to\Ho^2(\pi_1(X_\R),\C^\times)$. By using Magma (the code is included in Appendix \ref{magma}), we verify that we have $\Ho^2(\pi_1(X_\R),\mu_4)\cong (\Z/2)^6$ and thus $\Ho^2(\pi_1(X_\R),\C^\times)$ is $2$-torsion. Therefore the map $\Ho^2(\pi_1(X_\R),\Z/2)\to \Ho^2(\pi_1(X_\R),\C^{\times})$ is surjective, which proves the lemma.
\end{proof}
\end{lem}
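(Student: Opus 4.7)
The plan is to reduce linearity of the pairing with $\br X_\R$ to the assumed linearity of the pairing with $\Ho^2(\pi_1(X_\R),\Z/2)$ by constructing a surjective, $s^*$-compatible map from the latter to $\br X_\R$. Throughout I will use the identification $\br X_\R \cong \Ho^2(\pi_1(X_\R),\C^\times)$ of Remark \ref{descibedpairing}, under which pairing with $s \in \Ho^1(\R,\mu)$ is precisely pullback along the associated section $s \colon \Gamma_\R \to \pi_1(X_\R)$.

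First I would apply the short exact sequence of trivial $\pi_1(X_\R)$-modules $0 \to \mu_2 \to \C^\times \xrightarrow{(\cdot)^2} \C^\times \to 0$ (identifying $\mu_2$ with $\Z/2$). The induced long exact sequence gives a connecting map $\Ho^2(\pi_1(X_\R),\Z/2) \to \Ho^2(\pi_1(X_\R),\C^\times)$ whose image is exactly the $2$-torsion subgroup. By naturality of the long exact sequence under pullback along $s\colon \Gamma_\R \to \pi_1(X_\R)$, this map fits into the commutative square displayed in the statement of the lemma, with the bottom arrow the canonical isomorphism $\Ho^2(\R,\Z/2) \cong \br\R[2]$.

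Next I would argue that $\Ho^2(\pi_1(X_\R),\C^\times)$ is $2$-torsion, which makes the above map surjective. The group $\pi_1(X_\R) = \mu(\C)\rtimes \Gamma_\R$ has order $32$ with exponent dividing $4$, so its cohomology is $4$-torsion, and the sequence $0 \to \mu_4 \to \C^\times \to \C^\times \to 0$ yields a surjection $\Ho^2(\pi_1(X_\R),\mu_4) \twoheadrightarrow \Ho^2(\pi_1(X_\R),\C^\times)$. A direct Magma computation of $\Ho^2(\pi_1(X_\R),\mu_4)$ shows this group is $(\Z/2)^6$, forcing $\Ho^2(\pi_1(X_\R),\C^\times)$ to be $2$-torsion as well.

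Granted surjectivity, the conclusion follows formally: for $\alpha \in \br X_\R$, pick a lift $\tilde\alpha \in \Ho^2(\pi_1(X_\R),\Z/2)$; then for any $s_1,s_2 \in \Ho^1(\R,\mu)$ the hypothesis gives $(s_1+s_2)^*\tilde\alpha = s_1^*\tilde\alpha + s_2^*\tilde\alpha$ in $\Ho^2(\R,\Z/2)$, and applying the compatible map to $\br\R$ produces the corresponding identity for $\alpha$. The main obstacle, and the only nonformal ingredient, is establishing the $2$-torsion of $\Ho^2(\pi_1(X_\R),\C^\times)$; with $\pi_1(X_\R)$ a nonabelian extension of $\Gamma_\R$ by $(\Z/4)^2$, I see no easy bare-hands argument, so the Magma verification (relegated to an appendix) seems essentially unavoidable.
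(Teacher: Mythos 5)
Your proposal follows the paper's proof essentially step for step: the same square-power sequence $0\to\mu_2\to\C^\times\to\C^\times\to 0$ identifying the image of $\Ho^2(\pi_1(X_\R),\Z/2)\to\Ho^2(\pi_1(X_\R),\C^\times)$ with the $2$-torsion, the same compatibility square with $s^*$, the same reduction via the exponent-$4$ bound and the Magma computation $\Ho^2(\pi_1(X_\R),\mu_4)\cong(\Z/2)^6$, and the same formal transfer of left-linearity along the resulting surjection. (Minor point: the map in question is induced by the inclusion $\mu_2\subset\C^\times$, not a connecting map of the long exact sequence.)

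There is, however, one substantive error in your setup: you declare the coefficient sequence to consist of \emph{trivial} $\pi_1(X_\R)$-modules. It does not. In the identification $\br X_\R\cong \Ho^2(\pi_1(X_\R),\C^\times)$ of Remark \ref{descibedpairing}, the group $\pi_1(X_\R)\cong\mu(\C)\times\Gamma_\R$ acts on $\C^\times$ through the projection to $\Gamma_\R$ by complex conjugation; hence it acts on $\mu_4$ by inversion (this is exactly the action matrix $3$ appearing in the Magma code of Appendix \ref{magma}), while only on $\mu_2=\{\pm1\}$ is the action genuinely trivial. This is not a cosmetic issue: with the trivial action one has $\Ho^2(\Gamma_\R,\C^\times)=0$, so the bottom isomorphism $\Ho^2(\R,\Z/2)\xrightarrow{\sim}\br\R$ in your compatibility square would not exist; moreover $\Ho^2(\pi_1(X_\R),\C^\times)$ would then be the Schur multiplier $\Lambda^2\bigl((\Z/4)^2\times\Z/2\bigr)\cong\Z/4\oplus(\Z/2)^2$, which is not $2$-torsion, and $\Ho^2(\pi_1(X_\R),\mu_4)$ with trivial action is $(\Z/4)^3\oplus(\Z/2)^3$ rather than $(\Z/2)^6$. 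So the ``only nonformal ingredient'' you rely on — the Magma verification forcing $\Ho^2(\pi_1(X_\R),\C^\times)$ to be $2$-torsion — is only correct if the computation is set up with the conjugation (inversion) action; as literally stated, your argument would fail at precisely this step. With the module structure corrected, your proof coincides with the paper's.
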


We now prove the remaining step in the proof of Proposition \ref{egSU/T4}, which is proving the assumption in Lemma \ref{suffleftlinear}. We will decompose $\Ho^2(\pi_1(X_\R),\Z/2)$ into a direct sum of submodules by using the Künneth formula and we will study the pairing with $\Ho^1(\R,\mu)$ on these submodules.

\begin{lem}\label{pairingisleftlinear}
The pairing $\Ho^1(\R,\mu)\times \Ho^2(\pi_1(X_\R),\Z/2)\to \Ho^2(\R,\Z/2)$ is linear on the left.
\end{lem}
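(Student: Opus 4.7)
The plan is to decompose $\Ho^2(\pi_1(X_\R),\Z/2)$ via the K\"unneth formula and verify that $s \mapsto s^*\alpha$ is additive on each K\"unneth summand, using the explicit formulas of Lemma~\ref{cupcomp}.

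Because $\mu = T[4]_\R \cong (\Z/4)^2$ is a constant group scheme (Lemma~\ref{diagonalTproperties}), the outer Galois action on $\pi_1(\overline{X_\R}) = \mu(\C)$ is trivial, and the $\R$-point $o \in X(\R)$ splits the extension $1 \to \mu(\C) \to \pi_1(X_\R) \to \Gamma_\R \to 1$ as a direct product $\pi_1(X_\R) \cong \mu \times \Gamma_\R$. K\"unneth with coefficients in the field $\Z/2$ then yields
\[
\Ho^2(\pi_1(X_\R),\Z/2) \cong p_1^*\Ho^2(\mu,\Z/2) \,\oplus\, (p_1^*\Ho^1(\mu,\Z/2) \cup p_2^*\Ho^1(\Gamma_\R,\Z/2)) \,\oplus\, p_2^*\Ho^2(\Gamma_\R,\Z/2).
\]
Any $s \in \Ho^1(\R,\mu) = \Hom(\Gamma_\R,\mu)$ corresponds to a section $s = (\varphi_s,\Id)$ of $p_2$, and Lemma~\ref{cupcomp} identifies $s^*$ on the three summands with $\varphi_s^*$, with $\varphi_s^*(-) \cup (-)$, and with the identity, respectively.

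The key numerical input is that every $\varphi \colon \Gamma_\R = \Z/2 \to \Z/4$ factors through $2\Z/4 \hookrightarrow \Z/4$, on which the canonical generator of $\Ho^1(\Z/4,\Z/2) = \Hom(\Z/4,\Z/2)$ (reduction modulo $2$) is identically zero. Hence $\varphi^* \colon \Ho^1(\mu,\Z/2) \to \Ho^1(\Gamma_\R,\Z/2)$ vanishes. This disposes at once of the middle K\"unneth summand (where the pairing is identically zero, hence trivially additive) and also of the cross-product class $x_1 \cup x_2 \in \Ho^2(\mu,\Z/2)$, since by naturality $\varphi^*(x_1 \cup x_2) = \varphi^*x_1 \cup \varphi^*x_2 = 0$. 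The remaining Bockstein classes $y_1,y_2 \in \Ho^2(\mu,\Z/2)$ can be handled directly: pulling back the extension $0 \to \Z/2 \to \Z/8 \to \Z/4 \to 0$ along the nontrivial $\Z/2 \hookrightarrow \Z/4$ yields the nontrivial extension $0 \to \Z/2 \to \Z/4 \to \Z/2 \to 0$, so $\varphi^* y_i$ is the $\varphi_i$-multiple of the generator of $\Ho^2(\Gamma_\R,\Z/2)$, which is manifestly linear in $\varphi$.

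The main obstacle is the third summand $p_2^*\Ho^2(\Gamma_\R,\Z/2)$, on which $s^*$ acts as the identity and so is visibly non-additive as a function of $s$. The plan to deal with this is to use the surjection $\Ho^2(\pi_1(X_\R),\Z/2) \twoheadrightarrow \br X_\R$ established in Lemma~\ref{suffleftlinear} to identify this summand with the image of the structural map $\br\R \hookrightarrow \br X_\R$, that is, with the constant Brauer classes. In the application of Lemma~\ref{reducttolinBrXR} one pairs only against $\beta \in \br_{\infty}^{\prime} X$, whose evaluation at $o \in X(\R)$ is forced to vanish; consequently the ``constant part'' of $\beta|_\infty$ is zero, and the projection of $\beta|_\infty$ onto $p_2^*\Ho^2(\Gamma_\R,\Z/2)$ is trivial. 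Making this base-point normalization precise, and checking that the resulting restricted linearity statement is still strong enough to serve in the proof of Lemma~\ref{suffleftlinear}, is the subtle point where the argument requires the most care.
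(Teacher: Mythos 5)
Your decomposition and your treatment of two of the three K\"unneth summands follow the paper's own proof exactly: the paper likewise splits $\Ho^2(\pi_1(X_\R),\Z/2)$ via K\"unneth using $\pi_1(X_\R)\cong \mu(\C)\times\Gamma_\R$, kills the cross term $\Ho^1\otimes\Ho^1$ inside $\Ho^2(\mu(\C),\Z/2)$ by the observation that every $\varphi_i\colon\Gamma_\R\to\Z/4$ factors through $2\Z/4$, and handles the components $\Ho^2\otimes\Ho^0$ and $\Ho^0\otimes\Ho^2$ (your Bockstein classes) by additivity of the functor $\Ext_{\mathrm{ab}}(-,\Z/2)$; your direct computation of the pullback of $[\Z/8]$ along $2\Z/4\hookrightarrow\Z/4$ is an equivalent substitute. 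Your remark that the middle K\"unneth summand $p_1^*\Ho^1(\mu(\C),\Z/2)\cup p_2^*\Ho^1(\Gamma_\R,\Z/2)$ in fact pairs to zero with every $s$ is a slight sharpening of the paper, which only argues additivity there.

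On the remaining summand $p_2^*\Ho^2(\Gamma_\R,\Z/2)$ you have put your finger on a genuine defect. The paper disposes of it in one line, asserting that the pairing is ``bilinear'' because $s^*$ restricts to the identity there; but a pairing that is constant and nonzero in $s$ is exactly \emph{not} additive in $s$ with $\Z/2$-coefficients, since $(s+s')^*p_2^*\gamma=\gamma$ while $s^*p_2^*\gamma+s'^*p_2^*\gamma=2\gamma=0$. So the lemma as literally stated fails on the nonzero class of $p_2^*\Ho^2(\Gamma_\R,\Z/2)$ (which maps onto the nonzero constant class of $\br X_\R$), and your proposed repair is the right one: the pairing is affine in $s$ with constant term $s_o^*\alpha$, and in the only place the lemma is used (via Lemma \ref{suffleftlinear} inside Lemma \ref{reducttolinBrXR}) one pairs against $\beta\in\br'_{\infty}X$, for which $s_o^*(\beta|_\infty)=\ev_{\beta}(o)=0$. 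Since $\Ho^2(\Gamma_\R,\Z/2)\to\br\R$ is injective, any $\Z/2$-lift of such a $\beta|_\infty$ automatically has vanishing $p_2^*\Ho^2(\Gamma_\R,\Z/2)$-component, so left-linearity on the subgroup of classes with trivial pullback along $s_o$ is all that is needed, and the rest of the chain of lemmas goes through unchanged. In short, your proposal is correct, follows the paper's route, and is more careful than the paper at the one point where the paper's argument is actually wrong; the only thing left to do is to record the restricted statement explicitly, which is the routine verification sketched above.
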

\begin{proof}
By Lemma \ref{diagonalTproperties}, we have that $\mu$ is constant over $\R$, so we get $\pi_1(X_\R)\cong \mu(\C)\times \Gamma_\R$. Then we can apply the Künneth formula to decompose $\Ho^2(\pi_1(X_\R),\Z/2)$ as follows (note that we omitted tensoring with $\Ho^0(\mu(\C),\Z/2)$ and $\Ho^0(\R,\Z/2)$ in the formula to improve the presentation, but omitting this tensoring makes no difference):
\begin{equation*}
p_1^*\cup p_2^*\colon \Ho^2(\R,\Z/2)\oplus [\Ho^1(\mu(\C),\Z/2)\otimes \Ho^1(\R,\Z/2)]\oplus \Ho^2(\mu(\C),\Z/2)\overset{\sim}{\to}\Ho^2(\pi_1(X_\R),\Z/2).
\end{equation*}
Pairing elements in $\Ho^2(\R,\Z/2)=\Ho^2(\R,\Z/2)\otimes \Ho^0(\mu(\C),\Z/2)$ with elements in $\Ho^1(\R,\mu)$ is bilinear because by the first part of Lemma \ref{cupcomp} we have that for any $s\in\Ho^1(\R,\mu)$, the restriction of $s^*$ to $\Ho^2(\R,\Z/2)$ is the identity.\par
It also follows from Lemma \ref{cupcomp} that pairing with elements in $\Ho^1(\mu(\C),\Z/2)\otimes \Ho^1(\R,\Z/2)$ is linear since for $f\otimes g$ in this group and for sections $s,s':\Gamma_\R\to \mu(\C)\times \Gamma_\R$ to $p_2$, we have $$(s+s')^*(p_1^*(f)\cup p_2^*(g))=(p_1\circ s)^*(f)\cup g+(p_1\circ s')^*(f)\cup g.$$ 
Then the remaining part is to show that 
\begin{equation}\label{pairingwithmu}\Ho^2(\mu(\C),\Z/2)\times \Ho^1(\R,\mu(\C))\to \br{\R}\end{equation}
is linear on the left. Since $\mu_\R$ is constant, we have $\Ho^1(\R,\mu)=\Hom(\Gamma_\R,\mu(\C))$ and by Lemma~\ref{cupcomp}, the pairing (\ref{pairingwithmu}) is just the pullback $(f,\alpha)\mapsto f^*(\alpha)\in \Ho^2(\R,\Z/2)\cong \br \R$. \par The group $\Ho^2(\mu(\C),\Z/2)$ classifies central extensions of $\mu(\C)$ by $\Z/2$. Picking a basis gives $\mu(\C)\cong (\Z/4)^2$ and we are taking coefficients in the field $\Z/2$, so the Künneth-formula applies: \begin{equation*}p_1^*\cup p_2^*\colon \bigoplus_{p+q=2}\Ho^p\otimes \Ho^q\coloneq \bigoplus_{p+q=2} \Ho^p(\Z/4,\Z/2)\otimes \Ho^q(\Z/4,\Z/2)\overset{\sim}{\to} \Ho^2(\mu(\C),\Z/2).\end{equation*}
Since $\Z/4$ has one nontrivial central $\Z/2$-extension, $[\Z/8]$, which is abelian, the terms $\Ho^2\otimes \Ho^0$ and $\Ho^0\otimes \Ho^2$ are isomorphic to $\Z/2$ and have as nontrivial element an abelian extension. Hence any nonabelian extensions of $\mu(\C)$ by $\Z/2$ has $\Ho^1\otimes \Ho^1$-component not equal to zero.
It is well known that the pairing restricted to abelian extensions \[\Hom(\Gamma_\R,(\Z/4)^2)\times \Ext_{\mathrm{ab}}((\Z/4)^2,\Z/2)\to \Ext_{\mathrm{ab}}(\Gamma_\R,\Z/2)=\Ho^2(\R,\Z/2)\] is linear on the left, because for any abelian group $B$, the functor $\Ext_{\mathrm{ab}}(-,B)$ is additive (because $\Ext_{\mathrm{ab}}(-,B)$ is the first derived functor of $\Hom(-,B)$ and the $\Hom$-functor is additive. Note that bi-additivity of $\Ext_{\mathcal{A}}(-,-)$ holds in any abelian category $\mathcal{A}$ by a tedious calculation with the Bear-sum). This takes care of the $\Ho^0\otimes \Ho^2$ and $\Ho^2\otimes \Ho^0$ component of $\Ho^2(\mu(\C),\Z/2)$.
Therefore it suffices to show that any $\alpha\in \Ho^1\otimes \Ho^1\subset \Ho^2(\mu(\C),\Z/2)$ pairs trivially with $\Hom(\Gamma_\R,(\Z/4)^2)$.
Pick any element $\varphi=(\varphi_1,\varphi_2)\in \Hom(\Gamma_\R,(\Z/4)^2)$ and consider the commutative diagram (see Lemma~\ref{cupcomp}):
\begin{equation*}
\begin{tikzcd}
{\Ho^1(\Z/4,\Z/2)\otimes \Ho^1(\Z/4,\Z/2)} \arrow[d, "p_1^*\cup p_2^*"] \arrow[rr, "\varphi_1^*\otimes \varphi_2^*"] &  & {\Ho^1(\R,\Z/2)\otimes \Ho^1(\R,\Z/2)} \arrow[d, "\cup"] \\
{\Ho^2((\Z/4)^2,\Z/2)} \arrow[rr, "\varphi^*"]                                                             &  & {\Ho^2(\R,\Z/2)}.                                                          
\end{tikzcd}
\end{equation*}
For both $i$ we have that $\varphi_i^*\colon \Ho^1(\Z/4,\Z/2)\to \Ho^1(\R,\Z/2)$ is the zero map, since $\varphi_i$ factors via $2\Z/4\to \Z/4$. Therefore the pairing with $\Ho^1\otimes \Ho^1$ is indeed zero, which proves that the pairing (\ref{pairingwithmu}) is linear on the left. As mentioned just above (\ref{pairingwithmu}), this is sufficient to prove the lemma.
\end{proof}

\begin{proof}[Proof of Proposition \ref{egSU/T4}] Combine Lemma \ref{reducttolinBrXR}, Lemma \ref{suffleftlinear} and Lemma \ref{pairingisleftlinear}.
\end{proof}

\section{An attempt to generalize the ambient group}
When considering more general homogeneous spaces $X$ of $G$ where $G$ is a connected linear $k$-group, a classical construction (as in \cite{borovoi1996brauer}, \cite{borovoi2008elementary} and \cite{borovoi2013manin}) is to consider an auxiliary homogeneous space $W$ under $G'$ such that the maximal quotient of multiplicative type of the stabilizer injects into the maximal torus quotient of the ambient group $G_1$, and there is a morphism $W\rightarrow X$ making $W$ into an $X$-torsor of a quasi-trivial torus. This construction relates the approximation properties proven for $W$ to those we wish for $X$. We hope to use the same construction to study the question of when $\overline{X(k)}^S$ can be precisely cut out by $\br_S X$ or $\br_S^\prime X$ inside $X(\A_k^S)$, for homogeneous spaces $X=G/H$ of a connected linear $k$-group $G$ with commutative stabilizer $H$. We show that positive results still hold for the auxiliary $W$, but new difficulties arise when relating $W$ to $X$.
\par We recall the following notation as in \cite{borovoi1996brauer}: $G^\circ$ is the neutral connected component of $G$; $\Gu$ is the unipotent radical of $G^\circ$; $\Gred\coloneq G^\circ/\Gu$ (it is a connected reductive algebraic group); $\Gss$ is the derived group of $\Gred$ (it is semisimple); $\Gtor\coloneq\Gred/\Gss$ (it is a torus); $\Gssu\coloneq\ker(G^\circ\rightarrow\Gtor)$ (it is an extension of $\Gss$ by $\Gu$); $\Gmult\coloneq G/\Gssu$ (it is of multiplicative type). We denote by $X(\A_k)_\bullet$ the modified adelic space such that each $X(k_v)$ for $v$ archimedean is collapsed to its connected components, and the usual Brauer-Manin pairing induces a pairing with $X(\A_k)_\bullet$ since an evaluation map is constant on a connected component.
\begin{prop}\label{generalisation}
Let $W=G_1/H_1$ be a homogeneous space of a connected linear $k$-group $G_1$ with commutative stabilizer $H_1$, such that $\Gssu_1$ is simply connected and satisfies strong approximation away from $S$. Suppose $\Hmult_1$ injects into $\Gtor_1$. Then we have the equality\[\overline{W(k)}^S=W(\A_k^S)_\bullet^{\br_{1,S}}.\]

\end{prop}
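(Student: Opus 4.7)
The inclusion $\overline{W(k)}^S\subseteq W(\A_k^S)_\bullet^{\br_{1,S}}$ is formal: it follows from continuity of the Brauer--Manin pairing together with Albert--Brauer--Hasse--Noether, since elements of $\br_{1,S}W$ vanish at $S$-places and global reciprocity forces the $S$-omitted pairing with a rational point to be zero. For the reverse inclusion, the plan is to follow the three-step strategy of the proof of Theorem~\ref{projectioncommutativecase}, with $\Gssu_1$ playing the role of the semisimple simply connected ambient group and extra care being paid to the torus quotient $\Gtor_1$.

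Write $H_1\simeq M\times \Ga^r$ with $M=H_1^{\mathrm{mult}}$, so that $\Ho^1(\cdot,H_1)=\Ho^1(\cdot,M)$ in positive degrees, and let $\iota\colon M\hookrightarrow \Gtor_1$ denote the hypothesized injection. Fix $(x_v)_{v\notin S}\in W(\A_k^S)_\bullet^{\br_{1,S}}$ and set $y_v\coloneq \delta_v(x_v)\in \Ho^1(k_v,M)$ for $v\notin S$. The first step is to produce a global class $y\in \Ho^1(k,M)$ with $r_v(y)=y_v$ for $v\notin S$. The $H_1$-torsor $G_1\to W$ induces by pullback a natural injection $\Ho^1(k,\hat M)\hookrightarrow \br_1 W/\br k$ compatible with the cup-product pairing, extending diagram~(\ref{br1pairing}) to this setting; orthogonality of $(x_v)_{v\notin S}$ to the image of $\Ho^1_S(k,\hat M)$ in $\br_{1,S}W$, combined with the Poitou--Tate exact sequence~(\ref{Poitou-Tate}), furnishes the desired $y$.

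The second step, which I expect to be the main obstacle, is to modify $y$ by an element of $\Sha_S^1(k,M)$ so that its image under $\Ho^1(k,M)\to \Ho^1(k,G_1)$ becomes trivial, whence $y=\delta(x)$ for some $x\in W(k)$. The exact sequence $\Ho^1(k,\Gssu_1)\to \Ho^1(k,G_1)\to \Ho^1(k,\Gtor_1)$, combined with the fact that the composite $\Ho^1(k,M)\to \Ho^1(k,G_1)\to \Ho^1(k,\Gtor_1)$ equals $\iota_*$, splits this obstruction into a torus piece $\iota_*(y)\in \Ho^1(k,\Gtor_1)$ and a semisimple piece in $\Ho^1(k,\Gssu_1)$. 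Since each $y_v$ for $v\notin S$ lifts to $x_v\in W(k_v)$, one has $\iota_*(y)\in \Sha_S^1(k,\Gtor_1)$; a Poitou--Tate-for-tori argument leveraging the injectivity of $\iota$ then allows one to cancel $\iota_*(y)$ by adjusting $y$ within $\Sha_S^1(k,M)$. The residual semisimple obstruction is detected at archimedean places only, by the Hasse principle for $\Gssu_1$ (Theorem~\ref{kneserharder}); the $\bullet$-collapse at archimedean places in $\Omega\setminus S$ (where $\delta$ already factors through $\pi_0$) together with the freedom to choose $x_v$ for $v\in S$ absorbs this archimedean indeterminacy.

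Once $y=\delta(x)$ with $x\in W(k)$, Proposition~\ref{pointedsets} writes $(x_v)_{v\notin S}=(g_v)_{v\notin S}\cdot x$ for some $(g_v)_{v\notin S}\in G_1(\A_k^S)$. Although $G_1$ itself need not satisfy strong approximation, the Step~2 normalization trivializes the $\Gtor_1$-component of the translation, so that after a suitable translate by $G_1(k)$ the element $(g_v)_{v\notin S}$ can be taken in $\Gssu_1(\A_k^S)$; strong approximation for $\Gssu_1$ away from $S$ then produces a rational approximation of $(x_v)_{v\notin S}$ exactly as at the end of the proof of Theorem~\ref{projectioncommutativecase}. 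The hardest piece is Step~2: carrying out the Poitou--Tate-for-tori adjustment in $\Sha_S^1(k,M)$ compatibly with $\iota$, while ensuring that the remaining $\Gssu_1$-obstruction is controllable through the $\bullet$-framework and the $S$-local choices.
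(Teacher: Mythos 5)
Your proposal diverges substantially from the paper's argument, and it contains a genuine gap at the point you yourself flag as the ``main obstacle.'' The paper does not lift a global class in $\Ho^1(k,M)$ at all: it fibers $W$ over the torus $Y\coloneq \Gssu_1\backslash W\cong \Gtor_1/\Hmult_1$, observes that the injectivity hypothesis forces the fibers to be $\Gssu_1/\Ga^r$ (so they satisfy strong approximation and contribute no cohomological obstruction), and then invokes the Harari--Voloch theorem $\overline{Y(k)}^S=Y(\A_k^S)_\bullet^{\br_{1,S}}$ for the torus $Y$, together with openness of $W(\A_k^S)_\bullet\to Y(\A_k^S)_\bullet$. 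The torus direction is where the actual obstruction lives, and your argument never confronts it.

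Concretely, the gap is in your final paragraph. Even after arranging $y=\delta(x)$ for some $x\in W(k)$, Proposition~\ref{pointedsets} only gives $(x_v)_{v\notin S}=(g_v)_{v\notin S}\cdot x$ with $(g_v)\in G_1(\A_k^S)$, and your claim that the normalization of Step~2 lets you move $(g_v)$ into $\Gssu_1(\A_k^S)$ after a rational translate is unjustified and false in general. Equality of images in $\Ho^1(k_v,M)$ only says the points lie in the same $G_1(k_v)$-orbit; it carries no information about the $\Gtor_1$-component of the translating element. Take $H_1=1$ and $G_1=\Gssu_1\times T$ for a nontrivial torus $T$: then $M=0$, your $y$ is vacuously $0$, $x=e$, and $(g_v)=(s_v,t_v)$ with $(t_v)\in T(\A_k^S)$ arbitrary --- which certainly cannot be approximated using strong approximation for $\Gssu_1$ alone. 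What saves the day is that orthogonality to $\br_{1,S}W$ pushes forward to orthogonality to $\br_{1,S}Y$, and \emph{then} one needs the full strength of $\overline{Y(k)}^S=Y(\A_k^S)_\bullet^{\br_{1,S}}$ for tori; no amount of adjusting the global class $y$ by $\Sha^1_S(k,M)$ substitutes for this input. (Relatedly, your Step~2 claim that $\iota_*(y)\in\Sha^1_S(k,\Gtor_1)$ can be cancelled by an element of $\Sha^1_S(k,M)$ is asserted, not proved, and there is no reason for the map $\Sha^1_S(k,M)\to\Sha^1_S(k,\Gtor_1)$ to be surjective onto what you need.) I would recommend restructuring along the paper's fibration $W\to Y$, where the hypothesis $\Hmult_1\hookrightarrow\Gtor_1$ is used exactly once --- to identify the fiber stabilizer with $\Ga^r$ --- and the torus theorem of Harari--Voloch does the remaining work.
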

\begin{proof}Set $(x_v)_{v\notin S}\in W(\A_k^S)_\bullet^{\br_{1,S}}$ and consider any open $U$ containing $(x_v)_{v\notin S}$. We want to find $x\in W(k)$ such that $x\in U$. As in the proof of \cite[Proposition 3.5]{borovoi1996brauer}, we set $Y=\Gssu_1\backslash W$ and we let $\varphi\colon  W\rightarrow Y$ be the canonical map. Then $Y$ is a $k$-torus; the fibers of $\varphi$ are homogeneous spaces of the simply connected group $\Gssu_1$ with stabilizer 
isomorphic to $H_1\cap \Gssu_1$. 
 Since $\Hmult_1$ injects into $\Gtor_1$, we have $H_1\cap \Gssu_1\simeq H_1^{\text u}$, which is isomorphic to $\Ga^r$ for some $r$. For any $k$-point $z\in Y(k)$, the fiber $\varphi^{-1}(z)$ also has a $k$-point (by the vanishing of $\Ho^1(k,\G_a^r)$). We denote by $(y_v)_{v\notin S}$ the point $\varphi((x_v)_{v\notin S})\in Y(\A_k^S)$. Then we have $(y_v)_{v\notin S}\in Y(\A_k^S)_\bullet^{\br_{1,S}}$ by the commuting diagram 
\begin{equation}
\begin{tikzcd}
W(\A_k^S)_\bullet \arrow[d] \arrow[r,phantom,"\times" description] & \br_{1,S} W \arrow[r] \arrow[d,leftarrow]      & \QZ \arrow[d,Rightarrow, no head] \\
Y(\A_k^S)_\bullet                                \arrow[r,phantom,"\times" description] & {\br_{1,S} Y} \arrow[r]  & \QZ.     
\end{tikzcd}
\end{equation}
The map $\varphi$ induces an open map $\varphi\colon  W(\A_k^S)_\bullet\rightarrow Y(\A_k^S)_\bullet$ (see the proof of \cite[Lemma 4.2]{borovoi2013manin}), and thus we get an open $\varphi(U)$ containing $(y_v)_{v\notin S}.$
Since $Y$ is a $k$-torus, we have that $\overline{Y(k)}^S=Y(\A_k^S)^{\br_{1,S}}_\bullet$ (see \cite[Theorem 1]{harari2010brauer} or \cite[Proposition 4.6(1)]{liu2015very}, whose proofs are in fact valid for an arbitrary torus $T$ and an arbitrary set $S$ of places). Hence we can find $y\in Y(k)\cap\varphi(U).
$ Let $W_y$ be the fiber of $W$ over $y$, and set $V=W_y(\A_k^S)_\bullet\cap U\subseteq W(\A_k^S)_\bullet.$ Then $W_y=\Gssu_1/\Ga^r$ satisfies strong approximation away from $S$ by the assumption for $\Gssu_1$ and the vanishing of $\Ho^1(k_v,\Ga^r).$ Hence we can find $x\in W_y(k)\cap V\subseteq W(k)\cap U$ which finishes the proof.
\end{proof}
Set $X=G/H$ with $G$ being connected linear and $H$ being commutative. Is it possible that Theorem~\ref{main3} remains true for $X$ with the assumption of $\Gssu$ being simply connected and satisfying strong approximation away from $S$? Let us first construct the auxiliary $W$, as in \cite[\S 4.3]{borovoi1996brauer}. Choose an embedding $j : \Hmult \hookrightarrow T$ of $\Hmult$ into a quasi-trivial
torus $T$. Then we define an embedding $H\rightarrow G\times T, h
\mapsto (i(h),j(\mu(h)))$ where $i: H\hookrightarrow G$ and $\mu: H\rightarrow \Hmult$ are the canonical morphisms. Set $W=(G\times T)/H$. Then $W\rightarrow X$ is a torsor under $T$ and $W$ satisfies the condition that  $\Hmult$ injects into $(G\times T)^{\mathrm{tor}}$. \par
Supposing $\Gssu$ is simply connected and satisfies strong approximation away from $S$, we have $\overline{W(k)}^S=W(\A_k^S)_\bullet^{\br_{1,S}}.$ If we can lift an $S$-adelic point $(x_v)_{v\notin S}\in X(\A_k^S)_\bullet^{\br_{1,S}}$ (or in $X(\A_k^S)_\bullet^{\br^
\prime_{1,S}}$ or $X(\A_k^S)_\bullet^{\br^
\prime_{S}}$) to an $S$-adelic point in $W(\A_k^S)_\bullet^{\br_{1,S}}$, then we will get that $(x_v)_{v\notin S}\in \overline{X(k)}^S$. Our examples of $\overline{X(k)}^S\subsetneq X(\A_k^S)^{\br{1,S}}$ (or $\overline{X(k)}^S\subsetneq X(\A_k^S)^{\br^\prime_{1,S}}$  or $\overline{X(k)}^S\subsetneq X(\A_k^S)^{\br^\prime_{S}}$) in the previous section show that we need specific conditions for such a lifting to happen, unlike the situation in \cite[Corollary 7.21]{borovoi2013manin} where the lifting happens unconditionally. \par We also note that a $W$ as in Proposition \ref{generalisation} does not necessarily satisfy the condition (\ref{brauer1intro}) in Theorem \ref{main3}: take one of the many $X=G/H$ constructed in the previous section such that (\ref{brauer1intro}) is not satisfied. Then for an auxiliary space $W=(G
\times T)/H$ we have that $W(k_v)\rightarrow \Ho^1(k_v,H)$ factors through $W(k_v)\rightarrow\ X(k_v)\rightarrow  \Ho^1(k_v,H)$, violating (\ref{brauer1intro}) too. Therefore, we cannot naively restate Theorem~\ref{main3} for a homogeneous space $X=G/H$ of a connected linear $G$ with commutative stabilizer $H$, with the assumption of $\Gssu$ being simply connected and satisfying strong approximation away from $S$; we might need to state conditions on $\Gssu/\Gssu\cap H$ relating to the cases we studied.

\section{Appendix}
\subsection{ The real cohomology of $\SU_{p,q}$}
Let $\SU_{p,q}$ and $J_{p,q}$ be as in Definition \ref{SUpq}. In this appendix we aim to describe the real Galois cohomology of $\SU_{p,q}$. Throughout we fix $p,q$ and $n\coloneq p+q$. We will give a description of the following bijection in both directions:  
\[\Ho^1(\R,\SU_{p,q})\leftrightarrow \{\text{nonsingular hermitian forms }\C^n\times \C^n\to \C\text{ with discriminant }(-1)^p\}.\] 
 See \cite[29.19]{bookofinvolutions} and  \cite{allenhermitianforms} for results in a more general setting. 
\par We start by giving the definition of a hermitian form and some properties that a hermitian form can have.

\begin{df}
Let $k$ be a field and let $A$ be a quadratic étale algebra over $k$ with $\xi\colon a\mapsto \overline{a}$ the nontrivial element in $\Aut(A/k)$. A hermitian form over $k$ (with respect to $A/k$) is a $k$-bilinear map $h\colon A^n\times A^n\to A$  that satisfies $h(a_1,a_2)=\overline{h(a_2,a_1)}$ for any $a_1,a_2\in A^n$.
\end{df}

By picking an $A$-basis of $A^n$, we get that $h\colon A^n\times A^n\to A$ is given by $(v,w)\mapsto \overline{v}^tM w$ for some $M\in \mathrm{M}_n(A)$ such that $M_{ij}=\overline{M_{ji}}$ for all entries. The \textit{discriminant} of a hermitian form is $[\det(M)]\in k^*/(k^*)^2$.\par
We say that two hermitian forms $h,h'\colon A^n\times A^n\to A$ are \textit{isomorphic} over $k$ if there exists some $g\in \GL_n(A)$ such that the following diagram commutes: 
\begin{equation*}
\begin{tikzcd}
A^n\times A^n \arrow[r, "h"] \arrow[d, "g\times g"] & \C. \\
A^n\times A^n \arrow[ru, "h'"']                     &   
\end{tikzcd}  
\end{equation*}
In terms of corresponding matrices $M$ and $M'$, this is equivalent to there being some $S\in \GL_n(A)$ such that $SM\overline{S}^t=M'$ and we then call the matrices $M,M'$ \textit{similar} over $k$.\par 
A hermitian form is called \textit{nonsingular} if the induced pairing is nondegenerate.\\

By Sylvester's law of inertia, any nonsingular hermitian form $h\colon \C^n\times \C^n\to \C$ over $\R$ can have its matrix put in the form $M=\overline{S}^t  J_{p',q'} S$ for some $p'+q'=n$. It is easy to check that distinct $J_{p',q'}$ matrices are not similar over $\R$ and hence the $\R$-isomorphism classes of hermitian forms are given precisely by $\{[J_{p',q'}]\,|\,p'+q'=n\}$. After applying $(-)\otimes_\R\C$, one gets a hermitian form $h_\C\colon \C^n\otimes_\R \C\to \C^n\otimes_\R \C\to \C\otimes_\R \C$ over $\C$ (with respect to the étale algebra $\C\otimes_\R \C/\C$). Denote by $S\mapsto S^*$ the involution on $\Mat_n(\C)\otimes_\R \C$ given by $A\otimes B\mapsto \overline{A}^t\otimes B$. One may apply $\Lambda^*(-)\Lambda$ to $J_{p',q'}$ for $\Lambda$ a diagonal matrix with coefficients either $1\otimes 1$ or $1\otimes i$ to get $J_{p',q'}$ similar to $J_{p,q}$. So any hermitian form over $\R$ becomes isomorphic over $\C$ to the hermitian form defined by $J_{p,q}$.\\

The complex conjugation $\xi$ also acts on $\Mat_n(\C)\otimes_\R \C$ by conjugating the second factor. We associate to $h$ with matrix $M_h$ a $1$-cocycle $\sigma_h\colon \Gamma_\R\to \SU_{p,q}(\C)$ by first picking an isomorphism $S\in \GL(\C^n\otimes_\R \C)$,  $(\C^n\otimes_\R \C, h)\overset{S}{\to}(\C^n\otimes_\R \C, J_{p,q})$ and then setting $\sigma_h(\xi)=S^{-1}\xi(S)$ (note that $\xi$ acts only on the second factor of $\C\otimes_\R \C$). Since $(S^*)^{-1} J_{p,q} (S^{-1})^*=M_h$ we get that $\det(S)\in \R^\times$ and thus $\det(S^{-1}\xi(S))=1$. Moreover, to show that the cocycle maps into $\SU_{p,q}(\C)$, we compute:
\begin{equation*}
(S^{-1}\cdot \xi(S))^*\cdot (J_{p,q}\otimes 1)\cdot S^{-1}\xi(S)= \xi (S)^*\cdot (M_h\otimes 1)\cdot \cdot \xi(S)=\xi(S^*\cdot (M_h\otimes 1)\cdot S)=J_{p,q}\otimes 1.
\end{equation*}
It is routine to show that the cohomology class of $\sigma_h$ does not depend on the choice of $S$ and the choice of $h$ in an isomorphism class of hermitian forms.\\

To give the inverse: Suppose that $\sigma\colon \Gamma_\R\to \SU_{p,q}(\C)$, defined by $\xi\mapsto M$, is a cocycle. Consider the natural inclusion $\SU_{p,q}(\C)\subseteq \SL_{2n}(\C)$. Since $\Ho^1(K,\SL_{m})=0$ for any field $K$ and any $m$, there is $T\in \SL_{2n}(\C)$ such that $T^{-1}\xi(T)=M$. Then consider the bilinear form on $\C^{n}\otimes_\R\C$, restricted to~$\C^n$: \begin{equation*}b_\sigma\colon \C^n\times\C^n\to \C\otimes_\R \C\quad (v,w)\mapsto (v\otimes 1)^t\cdot  T^*\cdot (J_{p,q}\otimes 1)\cdot T\cdot (w\otimes 1).\end{equation*}
Notice that since $T\mapsto T^*$ commutes with complex conjugation on $\Mat_n(\C\otimes_\R \C)$, we have: \begin{equation*}\xi(T^*\cdot (J_{p,q}\otimes 1)\cdot T)=\xi(T)^*\cdot (J_{p,q}\otimes 1)\cdot \xi(T)= (M\cdot T)^*\cdot (J_{p,q}\otimes 1)\cdot (M\cdot T)=T^*\cdot (J_{p,q}\otimes 1)\cdot T.\end{equation*}
So actually the matrix defining $b_\sigma$ lies in $\Mat_n(\C)\otimes 1\subseteq \Mat_n(\C\otimes_\R \C)$. It follows directly that $T^*\cdot (J_{p,q}\otimes 1)\cdot T$ is a hermitian matrix and hence it defines a hermitian form $h_\sigma\colon \C^n\times \C^n\to \C$. This hermitian form becomes isomorphic to $J_{p,q}$ over $\C$ by the isomorphism $T$, which by construction gives the cocycle $\sigma(\xi)=T^{-1}\xi(T)=M$.\\

It is clear that the assingments described above are each others inverses and hence we get the bijection:
\begin{equation*}
\Ho^1(\R,\SU_{p,q})\leftrightarrow \{\text{nonsingular hermitian forms }\C^n\times \C^n\to \C\text{ with discriminant }(-1)^p\}.
\end{equation*}

\begin{eg}
Let $\sigma\colon \Gamma_\R\to \SU_{p+q}(\C)$ be a cocycle defined by $\sigma(\xi)=\colon M\in \SU_{p+q}(\R)$. Then $M$ is $2$-torsion and hence diagonalizable over $\R$, so $M=g^{-1}\cdot J_{p',q'}\cdot g$ for $q'\equiv q\,(\text{mod }2)$. Therefore the cocycle $\sigma$ is equivalent to $\tau\colon\xi\mapsto J_{p',q'}$. We write $\C^n\otimes_\R\C$ as $\C^n\oplus j\cdot \C$ with $j^2=-1$. Consider the tensor product of diagonal matrices, where there are $q'$ $i$'s and $q'$ $j$'s. Let 
\begin{equation*}
T\coloneq \diag(i,...,i,1,...,1)\otimes \diag(j,...,j,1,...,1).
\end{equation*}
It follows that $T^{-1}\xi(T)=J_{p',q'}$ and thus the associated hermitian form $h_\sigma$ is then defined by the matrix $T \cdot (J_{p,q}\otimes 1)\cdot T^*=(J_{p,q}\otimes 1)\cdot (\diag(1,...,1,j,...,j))^2=J_{p,q}\cdot J_{p',q'}$. So if $\xi$ has real image $M$ under $\sigma$, the corresponding hermitian form is obtained by multiplying $J_{p,q}$ by $J_{p',q'}$ where $q'$ is the number of negative eigenvalues of $M$.
\end{eg}

\subsection{Magma code}\label{magma}
We include here the Magma code that we used to calculate $\Ho^2(\pi_1(X_\R),\mu_4)$ in the proof of Lemma \ref{suffleftlinear}:
\begin{verbatim}
Pi1:= AbelianGroup(GrpPC, [2,4,4]);
Pi1:= ConditionedGroup(Pi1);
CM:= CohomologyModule(Pi1, [4], [3,1,1,1,1]);
H2:= CohomologyGroup(CM, 2);
H2;
\end{verbatim}

\bibliographystyle{alpha}
\bibliography{bibliographie}

\end{document}